\definecolor{antiquefuchsia}{rgb}{0.57, 0.36, 0.51}
\definecolor{azure}{rgb}{0.0, 0.5, 1.0}
\theoremstyle{plain}
\newtheorem{theorem}{Theorem}
\newtheorem{proposition}[theorem]{Proposition}
\newtheorem{corollary}[theorem]{Corollary}
\newtheorem{lemma}[theorem]{Lemma}
\theoremstyle{definition}
\newtheorem{definition}[theorem]{Definition}
\newtheorem{example}[theorem]{Example}
\newtheorem{remark}[theorem]{Remark}
\numberwithin{theorem}{section}
\numberwithin{equation}{section}
\newcommand{\Z}{\mathbb{Z}} 
\newcommand{\N}{\mathbb{N}} 
\newcommand{\R}{\mathbb{R}} 
\newcommand{\C}{\mathbb{C}} 
\newcommand{\DD}{\mathbb{D}} 
\newcommand{\HH}{\mathbb{H}}
\renewcommand{\O}{\mathcal{O}}
\newcommand{\he}{\text{\rm Her}} 
\newcommand{\M}{\mathfrak{M}}
\newcommand{\V}{\mathbb{V}}
\newcommand{\1}{{\bf 1}}
\newcommand{\extend}{\bullet}
\renewcommand{\d}{\mathrm{d}} 
\newcommand{\g}{\mathfrak{g}} 
\newcommand{\gl}{\mathfrak{gl}} 
\renewcommand{\u}{\mathfrak{u}} 
\newcommand{\so}{\mathfrak{so}} 
\DeclareMathOperator{\End}{End} 
\DeclareMathOperator{\supp}{supp}
\newcommand{\A}{\mathbb{A}}
\newcommand{\Id}{\mathrm{Id}}
\newcommand{\B}{\mathbb{B}}
\newcommand{\BB}{\mathfrak{B}}
\renewcommand{\H}{\mathcal{H}}
\newcommand{\I}{\mathbb{I}}
\renewcommand{\L}{\mathcal{L}}
\newcommand{\U}{\mathbb{U}}
\newcommand{\T}{\mathcal{T}}
\newcommand{\D}{\mathscr{D}} 
\newcommand{\G}{\mathbb{G}}
\renewcommand{\S}{\mathcal{S}} 
\newcommand{\even}{\mathrm{ev}}
\newcommand{\interior}{\mathrm{int}}
\newcommand{\act}{\triangleleft}
\newcommand{\lact}{\triangleright}
\DeclareMathOperator{\spn}{span}
\title[The TOG-Principle for simple surfaces]{The Transport Oka-Grauert Principle for simple surfaces}
\author[J. Bohr]{Jan Bohr}
\address{ 
Mathematical Institute of the University of Bonn,
Endenicher Allee 60, 53115 Bonn, Germany}
\email {bohr@math.uni-bonn.de}
\author[G.P. Paternain]{Gabriel P. Paternain}
\address{ Department of Pure Mathematics and Mathematical Statistics,
University of Cambridge,
Cambridge CB3 0WB, UK}
\email {g.p.paternain@dpmms.cam.ac.uk}
\date{\today}
\begin{document}
\maketitle


\begin{abstract}
This article considers the attenuated transport equation on Riemannian surfaces in the light of a novel twistor correspondence under which matrix attenuations correspond to holomorphic vector bundles on a complex surface. The main result is a transport version of the classical Oka-Grauert principle and states that the twistor space of a simple surface supports no nontrivial holomorphic vector bundles. This solves an open problem on the existence of matrix holomorphic integrating factors on simple surfaces and is applied to give a range characterisation for the non-Abelian X-ray transform.

The main theorem is proved using the inverse function theorem of Nash and Moser and the required tame estimates are obtained from recent results on the injectivity of attenuated X-ray transforms and microlocal analysis of the associated normal operators.

\end{abstract}

\section{Introduction}

Inverse problems play a central role in different parts of analysis and geometry. In these problems, there is often an underlying PDE of transport type involving the geodesic vector field of a Riemannian manifold that drives the behaviour of various X-ray transforms.
In recent years, a series of papers has culminated in general injectivity results (modulo gauge transformations) for 
a fundamental class of {\it nonlinear} X-ray transforms on simple Riemannian surfaces.
One goal of this paper is to give a characterisation of the {\it range} for this class of transforms
 via a theory of `holomorphic integrating factors’. The result is reminiscent of the Ward correspondence for anti-self-dual Yang-Mills fields, but without solitonic degrees of freedom.  The range characterisation turns out to be equivalent, via a novel twistor correspondence, to a non-existence theorem for holomorphic vector bundles on certain complex surfaces, resembling the classical Oka-Grauert theorem. Remarkably, the proof of this complex geometric result uses essentially both the theory of transport equations and microlocal analysis.



We now describe the setting of the paper in more detail. 
Let $(M,g)$ be a compact Riemannian surface with smooth boundary $\partial M$. Let $SM=\{(x,v)\in TM: g(v,v)=1\}$ be the unit tangent bundle and $X$ the geodesic vector field on $SM$. This paper addresses three aspects related to the transport equation 
\begin{equation}\label{ate}
(X+\A)R=0\text{ on } SM
\end{equation}
with matrix attenuations  $\A\in C^\infty(SM,\C^{n\times n})$: 1) The existence of special solutions to \eqref{ate}, called (matrix-)holomorphic integrating factors.  2) A twistor correspondence between attenuations $\A$ and holomorphic vector bundles on a complex surface. 3) A range characterisation for the non-Abelian X-ray transform, which arises from boundary measurements of solutions to \eqref{ate}.


These considerations are closely related and are motivated by an inverse problem that we now describe. Assume that $\partial M$ is strictly convex and that $M$ is non-trapping, i.e.~all geodesics in $M$ reach $\partial M$ in finite time.  We denote with $\nu$ the inward pointing unit normal to $\partial M$ and partition the boundary of $SM$ into $\partial SM= \partial_+SM \cup \partial_-SM$, where
 $$\partial_\pm SM=\{(x,v)\in SM: x\in \partial M,\;\; \pm g( \nu(x),v) \ge 0\}.$$ Then, by standard ODE theory, equation \eqref{ate} admits a unique continuous solution $R=R^0:SM\rightarrow GL(n,\C)$, differentiable along the geodesic flow, with $R^0 = \Id$ on $\partial_-SM$ and we define the {\it scattering data} of $\A\in C^\infty(SM,\C^{n\times n})$ by 
\begin{equation}
C_\A := R^0\vert_{\partial_+SM}\in C^\infty(\partial_+SM,GL(n,\C)).
\end{equation}
The nonlinear map $\A \mapsto C_\A$ is called the {\it non-Abelian X-ray transform}.
The inverse problem of recovering an attenuation $\A$ from measurements of its scattering data $C_\A$ has been 
subject of a number of recent papers \cite{PSU12,PaSa20,MNP19} (with earlier in work \cite{Ver91,Sha00, FiUh01, Nov02,Esk04}) and the question of injectivity is now well understood in the following setting: Let $G\subset GL(n,\C)$ be a Lie group with Lie algebra $\g$ and suppose that $\A$ is given in terms of a 1-form $A\in \Omega^1(M,\g)$ and a matrix field $\Phi\in C^\infty(M,\g)$ as
\begin{equation}
\A(x,v) =A_x(v) + \Phi(x).
\end{equation}
We then write $\A=(A,\Phi)$ (referred to as $\g$-pair) and $C_\A=C_{A,\Phi}$ and note that the scattering data of a $\g$-pair is a $G$-valued function. Two special cases are of particular importance: If $\Phi=0$, then $C_A=C_{A,0}$ describes parallel transport of the connection that $A$ induces on the trivial bundle $M\times \C^n$. If $A=0$ and $\g=\so(3)$, then $C_\Phi = C_{0,\Phi}$ arises as measurement data in a novel imaging method called {\it Polarimetric Neutron Tomography} \cite{DL_20,MNP19,nature1,nature2}.

A surface $(M,g)$ is called {\it simple}, if $\partial M$ is strictly convex and $M$ is non-trapping and free of conjugate points. 

\begin{theorem}[Paternain, Salo, Uhlmann -- 2012 \& 2020]\label{PSU} Let $(M,g)$ be a simple surface and $G=U(n)$ \cite{PSU12} or $G=GL(n,\C)$ \cite{PaSa20}. Suppose that two $\g$-pairs $(A,\Phi)$ and $(B,\Psi)$ have the same scattering data, $C_{A,\Phi}=C_{B,\Phi}\in C^\infty(\partial_+SM,G)$. Then
\begin{equation}
(B,\Psi) = (A,\Phi)\act\varphi := (\varphi^{-1} \d \varphi + \varphi^{-1} A \varphi,\varphi^{-1} \Phi \varphi)
\end{equation}
for some gauge $\varphi\in C^\infty(M,G)$ with $\varphi = \Id$ on $\partial M$.\qed
\end{theorem}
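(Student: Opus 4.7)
The plan is to reduce the nonlinear uniqueness problem to a linear transport equation on $SM$ and then exploit fibrewise Fourier analysis on $SM$ together with the existence of matrix holomorphic integrating factors on simple surfaces.

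I would begin with a pseudo-linearisation. Write $\A(x,v) = A_x(v) + \Phi(x)$, $\B(x,v) = B_x(v) + \Psi(x)$ and let $R^0_\A, R^0_\B : SM \to G$ denote the fundamental solutions of $(X+\A)R=0$ and $(X+\B)R=0$ with $R|_{\partial_-SM}=\Id$. Setting
$$U := R^0_\A\,(R^0_\B)^{-1}\in C^\infty(SM,G),$$
a short computation yields the twisted transport equation
$$XU + \A\, U - U\, \B = 0\quad\text{on }SM,$$
and the hypothesis $C_{A,\Phi} = C_{B,\Psi}$ forces $U|_{\partial SM} = \Id$. The point is that if $U = \varphi\circ\pi$ for some $\varphi:M\to G$ with $\varphi|_{\partial M}=\Id$, then separating the degree-one and degree-zero parts of the above equation with respect to the fibrewise $S^1$-action on $SM$ immediately recovers $B = \varphi^{-1}A\varphi + \varphi^{-1}\d\varphi$ and $\Psi = \varphi^{-1}\Phi\varphi$. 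Thus everything hinges on showing that $U$ is independent of the velocity.

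To establish this, I would decompose $C^\infty(SM,\C^{n\times n}) = \bigoplus_{k\in\Z}\Omega_k$ into fibrewise Fourier modes, write $X = \eta_+ + \eta_-$ with the Guillemin-Kazhdan operators shifting Fourier degree by $\pm 1$, and look for matrix \emph{holomorphic integrating factors}: smooth $F:SM\to GL(n,\C)$ solving $(X+\A)F=0$ and supported in non-negative Fourier modes (and an antiholomorphic analogue $H$ for $\B$). On a simple surface these can be built by a $\bar\partial$-type fixed-point argument exploiting injectivity of the unattenuated X-ray transform $I_0$ on $0$- and $1$-tensors. Conjugating via $\widetilde U := F^{-1} U H$ produces a transport equation whose structure respects the Fourier splitting; combined with a Mukhometov-Pestov energy identity and the boundary condition, this forces all non-zero Fourier modes of $\widetilde U$, and hence of $U$, to vanish.

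The main obstacle is the construction of the holomorphic integrating factors for matrix attenuations and the control of their regularity. In the unitary case \cite{PSU12} one may take them unitary, so that the Pestov identity closes neatly, while in the general linear case \cite{PaSa20} one must work with non-unitary, complex-valued factors; there the argument relies on sharper mapping properties of attenuated transports and more delicate energy estimates on simple surfaces.
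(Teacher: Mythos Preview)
The paper does not prove Theorem~\ref{PSU}: it is stated as a cited result (note the \qed), so there is no in-paper proof to compare against. What can be assessed is whether your sketch is a faithful outline of the arguments in \cite{PSU12,PaSa20} and whether it is internally consistent within the logic of the present paper.

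Your pseudo-linearisation step is correct and is indeed the starting point of both \cite{PSU12} and \cite{PaSa20}: one forms $U=R^0_\A(R^0_\B)^{-1}$, obtains $XU+\A U-U\B=0$ with $U\vert_{\partial SM}=\Id$, and then must show $U$ depends on $x$ only. (A minor omission: $R^0_\A,R^0_\B$ are a priori only continuous with a singularity at the glancing set, so smoothness of $U$ requires an argument, e.g.\ embedding into a slightly larger surface.)

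The substantive gap is your appeal to \emph{matrix} holomorphic integrating factors. You write that ``on a simple surface these can be built by a $\bar\partial$-type fixed-point argument exploiting injectivity of the unattenuated X-ray transform $I_0$'' --- but this is exactly the open problem that the present paper resolves, and it does so not by any fixed-point argument but via Nash--Moser (Theorem~\ref{mainhif}, \S\ref{s_hif}), using as input the attenuated injectivity results of \cite{PSU12}. Invoking matrix HIF to prove Theorem~\ref{PSU} is therefore, within this paper's dependency structure, circular or at best anachronistic.

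The actual proofs proceed differently. In \cite{PSU12} (unitary case) one uses only \emph{scalar} holomorphic integrating factors (available from \cite{SaUh11}) together with a Pestov-type energy identity, exploiting skew-Hermiticity so that no matrix HIF are needed. In \cite{PaSa20} (general linear case) the key new idea is not a sharper energy estimate but a loop-group/Birkhoff factorisation (cf.\ Theorem~\ref{thmfac} and \cite[Lemma~5.2]{PaSa20}) that reduces the $GL(n,\C)$ problem to the already-settled $U(n)$ case. Your sketch captures the broad shape but misidentifies the decisive technical ingredients in both papers.
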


Here $U(n)$ is the unitary group, with Lie algebra $\u(n)=\{T\in \C^{n\times n}:T^*=-T\}$ consisting of skew-Hermitian matrices. On manifolds of dimension $\ge 3$ a similar result was obtained in \cite{PSUZ19}, using the groundbreaking techniques of Uhlmann and Vasy \cite{UhVa16} that also underpin the recent solution of the boundary rigidity problem \cite{SUV21}.
For a more detailed account on the history and applications of the non-Abelian X-ray transform we refer to \cite{PaSa20,Nov19} as well as the recent monograph \cite{PSU20}.


\subsection{Holomorphic integrating factors} Our first contribution concerns {\it \mbox{(matrix-)} holomorphic integrating factors (HIF)}, which were initially sought after as a tool to prove Theorem \ref{PSU} and are now used 
to obtain the range characterisations in \textsection \ref{introrange}.

To define holomorphic integrating factors we use the fact that every smooth function $F:SM\rightarrow \C^{n\times n}$ has a unique decomposition $F=\sum_{k\in \Z}F_k$ in terms of its vertical Fourier modes (see \textsection\ref{prelim} for more details). Then $F$ is called {\it fibrewise holomorphic} iff $F_k=0$ for $k<0$ and we define
\begin{equation}\label{defG}
\G =\{F\in C^\infty(SM,GL(n,\C)):F\text{ and } F^{-1} \text{ are fibrewise holomorphic}\}.
\end{equation}
\begin{definition} 
A function $F\in \G$ is called a holomorphic integrating factor for the attenuation $\A\in C^\infty(SM,\C^{n\times n})$, if it satisfies the equation $(X+\A)F=0$ on $SM$.
\end{definition}

If an attenuation $\A$ admits holomorphic integrating factors, then necessarily its Fourier modes vanish for $k<-1$, which is to say that $\A$ is a member of the set
\begin{equation}\label{defmho}
\mho=\{\A\in C^\infty(SM,\C^{n\times n}): \A_k=0 \text{ for } k<-1\}.
\end{equation}
Note that $\mho$ in particular contains all  $\gl(n,\C)$-pairs $\A=(A,\Phi)$, which have nonzero Fourier modes only for $\vert k \vert \le 1$.
We prove the following result:

\begin{theorem}\label{mainhif}
On a simple surface $(M,g)$ every attenuation $\A\in \mho$ admits holomorphic integrating factors.
\end{theorem}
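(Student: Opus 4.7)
The plan is to reformulate Theorem~\ref{mainhif} as a surjectivity statement and attack it with the Nash--Moser inverse function theorem. Define the nonlinear map
\begin{equation*}
\Psi\colon \G \to \mho, \qquad \Psi(F) := -(XF)F^{-1}.
\end{equation*}
A direct inspection of vertical Fourier modes shows that $-(XF)F^{-1}$ has no mode of degree $k<-1$ whenever $F$ and $F^{-1}$ are fibrewise holomorphic, so the codomain is correct. An attenuation $\A$ admits a holomorphic integrating factor precisely when $\A\in\Psi(\G)$, so the theorem is equivalent to the surjectivity of $\Psi$.

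Since $\Psi(\Id)=0$ and $\mho$ is a linear subspace, for each target $\A\in\mho$ I would lift the straight-line path $t\mapsto t\A$, $t\in[0,1]$, to a path $t\mapsto F_t$ in $\G$ with $F_0=\Id$ and $\Psi(F_t)=t\A$, then evaluate at $t=1$. Concretely, one shows that
\begin{equation*}
S := \{t\in [0,1] : t\A \in \Psi(\G)\}
\end{equation*}
contains $0$ and is both open and closed in $[0,1]$, hence equals $[0,1]$.

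The openness step is the analytic heart of the argument and is where Nash--Moser enters. Differentiating $\Psi$ at $F_0\in\Psi^{-1}(\A_0)$ in the direction $\delta F = GF_0$, with $G$ a smooth fibrewise holomorphic matrix function, one computes
\begin{equation*}
d\Psi_{F_0}(GF_0) \;=\; -\bigl(XG+[\A_0,G]\bigr).
\end{equation*}
Thus inverting the linearization amounts to solving the attenuated transport equation
\begin{equation*}
(X+\mathrm{ad}_{\A_0})G = -\tilde\A
\end{equation*}
for a fibrewise holomorphic $G$, given any prescribed $\tilde\A\in\mho$, and doing so with tame Sobolev estimates uniform in $\A_0$ varying in bounded subsets of $\mho$. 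The existence of a \emph{fibrewise holomorphic} solution is equivalent, via a duality/boundary argument, to the injectivity of a suitable adjoint-representation version of the non-Abelian X-ray transform on a simple surface, which is provided by Theorem~\ref{PSU} and its extensions. Quantitative control comes from microlocal analysis of the associated normal operator, which on a simple surface is an elliptic pseudodifferential operator of negative order and admits a parametrix; the resulting gain of derivatives is what allows a Nash--Moser iteration to converge despite the inherent loss. Closedness of $S$ is then comparatively soft: differentiating $\Psi(F_t)=t\A$ in $t$ recasts the continuation as an ODE $\partial_t F_t = G_t F_t$ in $\G$, driven by the same solution operator, and the tame estimates used for openness furnish uniform Sobolev bounds on $F_t$ sufficient to pass to limits.

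I expect the principal obstacle to be the tame invertibility of the linearization. Mere solvability of $(X+\mathrm{ad}_{\A_0})G=-\tilde\A$ is standard ODE theory; solvability \emph{within} the class of fibrewise holomorphic $G$ is not, and it is precisely the content of the injectivity results for non-Abelian X-ray transforms on simple surfaces. Threading those existence results through microlocal parametrix estimates to produce tame Sobolev bounds uniform in $\A_0$ is the central technical task; once such a tame right-inverse is in hand, the Nash--Moser iteration and the subsequent continuation argument should follow established patterns.
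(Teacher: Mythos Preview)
Your proposal is correct and follows essentially the same route as the paper: both reduce the problem to Nash--Moser via tame right-invertibility of the linearized operator $G\mapsto XG+[\A_0,G]$ acting on fibrewise holomorphic matrix functions, and both identify the injectivity results for attenuated X-ray transforms on simple surfaces together with microlocal ellipticity of the associated normal operator as the source of that tame right inverse.

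The one packaging difference worth flagging is how the global step is organised. You run a continuation argument along the segment $t\mapsto t\A$, needing both openness and closedness of $S$. The paper instead exploits the group-action structure: since $\Psi(F)=0\act F^{-1}$, the image $\Psi(\G)$ is precisely the $\G$-orbit of $0\in\mho$, and Nash--Moser applied at \emph{every} $\A\in\mho$ shows that every orbit is open. Orbits partition the connected space $\mho$, so there is only one. This sidesteps your closedness step entirely---no uniform-in-$t$ control on $F_t$ or limit argument in $\G$ is required---and is the cleaner way to close the argument once the tame right inverse (your ``principal obstacle'') is established.
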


In the Abelian case ($n=1$) this theorem was established in \cite{SaUh11} and has since become an indispensable tool in the treatment of attenuated and tensor tomography. The non-Abelian case is much harder and has so far only been addressed in a Euclidean setting. There a weak form of HIF (with $F$ only being continuous)  was constructed by Novikov \cite{Nov02} for $\A$ sufficiently small and, without smallness assumption, by Eskin and Ralston  \cite{EsRa04}. The question of whether smooth matrix HIF exist on simple surfaces has since been open and we can now give an affirmative answer.

The idea behind the proof of Theorem \ref{mainhif} is conceptually quite simple. The set $\G$ from \eqref{defG} forms a group and acts on $\mho$ from the right by
\begin{equation}\label{rule}
\A\act F = F^{-1} XF + F^{-1} \A F,
\end{equation}
such that the orbit of $0\in \mho$ contains precisely those attenuations $\A$ that admit holomorphic integrating factors.
Theorem \ref{mainhif} can thus be reformulated as transitivity of this group action. Using results on the attenuated X-ray transform from \cite{PSU12,PaSa20} and microlocal analysis of the associated normals operators,
we show that the derivative of $F\mapsto \A\act F$ at $\Id \in \G$ is surjective for all $\A\in \mho$. After establishing appropriate tame estimates, we use this together with the inverse function theorem of Nash and Moser to show that all orbits of $\G$ are open. As $\mho$ is connected, the action must be transitive.

\subsection{Twistor correspondence} \label{twistorintro} The second purpose of this article is to promote a novel viewpoint on transport equations as in \eqref{ate} by relating them to holomorphic vector bundles on a {\it twistor space} $Z$ associated to $(M,g)$. This is inspired by Penrose's twistor programme \cite{Pen77} and the paradigm that solutions to integrable systems should be parametrised by complex geometric objects \cite{AHS78,LeB80, Hit80,MaWo96}.

The twistor space $Z$ can be constructed for any oriented surface $(M,g)$ and, as a smooth manifold, equals the unit disk bundle
\begin{equation*}
Z=\{(x,v)\in TM:g(v,v)\le 1\}.
\end{equation*}
We equip $Z$ with a complex structure that turns $Z^{\interior}$ into a classical complex surface, and that degenerates at $SM\subset \partial Z$.
Postponing precise definitions to \textsection\ref{twistor}, we note that standard constructions from complex geometry can be carried out `smooth up to the boundary', in particular there is a natural moduli space
\begin{equation}
\M=\M_n(Z)=\{\text{Holomorphic rank } n \text{ vector bundles on } Z\}/\sim,
\end{equation}
where $\sim$ denotes isomorphism of holomorphic vector bundles. We establish several correspondence principles (see Propositions \ref{tcA} and \ref{tcB}) which relate the complex geometry on $Z$ to transport problems on $SM$. In particular, if $(M,g)$ is diffeomorphic to a disk (e.g.,~when it is simple), we prove that there is an isomorphism
\begin{equation}\label{corrM}
\M \cong \mho / \G,
\end{equation}
where the right hand side is the quotient space under the action defined in  \eqref{rule}. In light of \eqref{corrM}, we may reformulate Theorem \ref{mainhif} as follows (see also Theorem \ref{tog} where the result is stated in context):

\begin{theorem}[Transport Oka-Grauert principle]\label{maintog} Let $Z$ be the twistor space of a simple surface $(M,g)$. Then $\M_n(Z)=0$, that is, $Z$ supports no nontrivial holomorphic vector bundles.
\end{theorem}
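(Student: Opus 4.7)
The plan is to deduce Theorem \ref{maintog} directly from Theorem \ref{mainhif} by invoking the twistor correspondence \eqref{corrM}. Under the isomorphism $\M_n(Z) \cong \mho/\G$, the trivial rank-$n$ holomorphic vector bundle on $Z$ should correspond to the $\G$-orbit of the zero attenuation $\A = 0 \in \mho$, so the assertion $\M_n(Z) = 0$ is equivalent to the statement that this orbit coincides with all of $\mho$, i.e.\ that the $\G$-action is transitive.

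To check transitivity, I would fix an arbitrary $\A \in \mho$. By Theorem \ref{mainhif}, there exists a holomorphic integrating factor $F \in \G$, that is, $(X + \A) F = 0$. Multiplying on the left by $F^{-1}$ and recalling the action formula \eqref{rule}, this identity reads exactly $\A \act F = 0$. Hence every $\A \in \mho$ lies in the $\G$-orbit of $0$. Combined with \eqref{corrM}, this gives $\M_n(Z) \cong \mho/\G = \{[0]\}$, and by the identification above the unique class corresponds to the trivial bundle, yielding $\M_n(Z) = 0$.

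The main obstacle in this strategy is of course Theorem \ref{mainhif} itself, whose proof (as sketched in the introduction) requires the Nash--Moser inverse function theorem, surjectivity of a linearised gauge operator via the attenuated X-ray transform results of \cite{PSU12, PaSa20}, and tame estimates drawn from microlocal analysis of the associated normal operators, together with the connectedness of $\mho$ to upgrade openness of orbits to transitivity. A secondary subtlety to pin down when constructing the twistor correspondence in \textsection\ref{twistor} is that the distinguished class $[0] \in \mho/\G$ does indeed get sent to the isomorphism class of the trivial holomorphic bundle on $Z$; this should be transparent from the construction of \eqref{corrM}, since the zero attenuation corresponds to the $\bar\partial$-operator with no twisting. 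Once both \eqref{corrM} and Theorem \ref{mainhif} are in hand, the deduction of Theorem \ref{maintog} is purely formal, with no further analytic input required.
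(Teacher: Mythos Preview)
Your proposal is correct and matches the paper's approach exactly: the paper states Theorem \ref{tog} (which is Theorem \ref{maintog} restated in context) immediately after Proposition \ref{tcB} with nothing more than a \qed, precisely because the isomorphism $\M \cong \mho/\G$ reduces the claim to the transitivity of the $\G$-action, which is equivalent to Theorem \ref{mainhif}. Your identification of the orbit of $0$ with the trivial bundle class and the computation $\A \act F = 0 \Leftrightarrow (X+\A)F = 0$ are both correct, and simple surfaces are diffeomorphic to a disk so that \eqref{corrM} applies.
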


This result is reminiscent of the Oka-Grauert principle in complex geometry (cf.~\cite{Gra58,For11}), which states that on a Stein manifold the classification of continuous and holomorphic vector bundles coincide. This, amongst other similarities elaborated on in \textsection \ref{twistor}, suggests the following slogan:
\begin{equation*}
\text{\it Twistor spaces of simple surfaces behave like (contractible) Stein surfaces.}
\end{equation*}
It is tempting to try and prove Theorem  \ref{maintog} by complex geometric methods, thus deriving Theorem  \ref{mainhif} as corollary. However, there are several obstacles to this: First one would need to show that $Z^\interior$ is indeed a Stein surface -- this is easily seen if $(M,g)$ is flat (see Lemma \ref{flatstein}), but remains challenging for other geometries. Second, one has to deal with the degeneracy of the complex structure at $\partial Z$, which is a highly nontrivial task. The work of 
Eskin and Ralston \cite{EsRa04} can be interpreted as such  a `desingularisation', similar to the one performed, albeit in a different setting, by LeBrun and Mason in \cite{LeMa02}. We discuss this approach in more detail in  \textsection \ref{relatedwork}.

For general simple surfaces it seems to be preferable to prove Theorem \ref{maintog} using transport techniques, requiring however, the injectivity result in \cite{PSU12} {\it a priori}. It is curious to note that the techniques in \cite{PSU12} were in turn inspired by the Kodaira vanishing theorem from complex geometry. 

In \cite{BLP23}, we investigate the behaviour of holomorphic vector bundles in a non-simple setting, specifically for the twistor space $Z$ of a closed Anosov surface of genus $\mathsf g$. There we demonstrate that $\M_1(Z,Z^\interior)$, the moduli space of holomorphic line bundles on $Z$ up to equivalence in the interior, is isomorphic to $\C^\mathsf{g}/\Z^{2\mathsf{g}}\times \C$.

\subsection{Range characterisation}\label{introrange} Finally, we provide a characterisation of the range of the non-Abelian X-ray transform $\A\mapsto C_\A$ in terms of boundary objects. This is inspired by the range characterisations for the linear X-ray transform by Pestov and Uhlmann \cite{PeUh04} and the subsequent work for attenuated X-ray transforms in \cite{PSU15,AMU18}.

Our main result concerns the range of $(A,\Phi)\mapsto C_{A,\Phi}$ for $\u(n)$-pairs and is formulated in terms of a `boundary operator'
\begin{equation}
P:C_\alpha^\infty(\partial_+SM,\he^+_n) \rightarrow C^\infty(\partial_+SM,U(n)),
\end{equation}
where $\he^+_n\subset \C^{n\times n}$ denotes Hermitian positive definite  matrices. Postponing precise definitions to \textsection \ref{range}, we note that the domain of $P$ and the operator $P$ itself are defined in terms of the following objects:
\begin{itemize}
\item The {\it scattering relation} $\alpha:\partial_+ SM\rightarrow \partial_- SM$ of $(M,g)$, sending starting point and direction of a geodesic to end point and direction.
\item A nonlinear type of Hilbert transform 
\begin{equation}\H^+:C^\infty(\partial SM,\he^+_n)\rightarrow C^\infty(\partial SM,GL(n,\C)),
\end{equation}
defined in terms of the Birkhoff factorisation in loop groups \cite{PS86}, see \textsection\ref{hilbertsection}.
\end{itemize}

\begin{theorem}[Range characterisation for $\u(n)$-pairs]\label{mainrange}
Suppose that ($M,g)$ is a simple surface (or more generally, that $\M=0$). Then an element $q\in C^\infty(\partial_+SM,U(n))$ lies in the range of $\{\u(n)\text{-pairs}\}\ni (A,\Phi)\mapsto C_{A,\Phi}$ if and only if 
\begin{equation*}
\hspace{3em} q=h \cdot Pw \cdot (h^{-1}\circ \alpha)
\end{equation*}
for some $w\in C_\alpha^\infty(\partial_+SM,\he^+_n)$ and a contractible map $h\in C^\infty(\partial M,U(n))$ (i.e. the element induced in $\pi_{1}(U(n))$ is trivial.)
\end{theorem}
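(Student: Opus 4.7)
My plan is to prove both directions by combining holomorphic integrating factors (HIFs) with a fibrewise Birkhoff factorisation. The pivotal identity throughout is that for $F \in \G$ solving $(X+\A)F = 0$ with $\A$ skew-Hermitian,
\[
X(F^*F) = (XF)^*F + F^*(XF) = -F^*(\A^* + \A)F = 0,
\]
so $F^*F$ is a first integral of the geodesic flow.

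For the forward direction, take a $\u(n)$-pair $(A,\Phi)$ and set $\A = A_x(v)+\Phi \in \mho$. Theorem~\ref{mainhif} furnishes an HIF $F \in \G$, and the identity above shows that $w := (F^*F)|_{\partial_+SM}$ lies in $C^\infty_\alpha(\partial_+SM, \he^+_n)$, with $F^*F$ equal to the $\alpha$-invariant extension of $w$ along the flow. A fibrewise Birkhoff factorisation -- realising $\H^+$ -- decomposes $F|_{\partial SM} = h \cdot G$, where $G$ is fibrewise holomorphic with $G^*G = w$ and $h$ depends only on the basepoint, taking values in $U(n)$. Contractibility of $h$ is automatic, since the existence of $F$ on $SM$ (with $M$ a disk) yields a continuous extension of $h$ to $M$. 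The identity $C_\A(y) = F(y)F(\alpha(y))^{-1}$ then unfolds as
\[
C_\A = h \cdot \bigl[G_+\cdot (G_- \circ \alpha)^{-1}\bigr] \cdot (h^{-1}\circ\alpha) = h \cdot Pw \cdot (h^{-1}\circ \alpha),
\]
with the bracket identified as $Pw$; $\alpha$-invariance of $w$ confirms that the bracket is $U(n)$-valued.

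For the converse, reverse the recipe. Extend $w$ $\alpha$-invariantly to $\partial SM$, apply $\H^+$ to obtain fibrewise holomorphic $G$ with $G^*G = w$, and use contractibility to extend $h$ to $H \in C^\infty(M,U(n))$, viewed fibrewise-holomorphically on $SM$. Set $f := H \cdot G$ on $\partial SM$. The delicate step is to produce $F \in \G$ on $SM$ extending $f$ with $F^*F = \tilde w$, where $\tilde w$ denotes the first-integral extension of $w$ to $SM$. Granted this, $\A := -(XF)F^{-1} \in \mho$ and $X(F^*F) = 0$ force $\A^* + \A = 0$; combining $\A \in \mho$ (modes $\geq -1$) with skew-Hermiticity (which swaps modes $k$ and $-k$) confines the modes to $\{-1, 0, 1\}$, identifying $\A$ as a $\u(n)$-pair, and retracing the forward computation gives $C_\A = q$.

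The main obstacle lies in this combined extension: producing $F \in \G$ extending $f$ \emph{and} satisfying $F^*F = \tilde w$. Bare existence of some $F_0 \in \G$ extending $f$ is guaranteed by the Transport Oka--Grauert principle (Theorem~\ref{maintog}, $\M_n(Z) = 0$) via the isomorphism \eqref{corrM}. To enforce the quadratic constraint, I would then seek a correcting gauge $U \in \G$ so that $(F_0 U)^*(F_0 U) = \tilde w$, a parameter-dependent fibrewise Birkhoff problem on $SM$; its obstructions are again of moduli/twistor type and dissolve under a second application of $\M_n(Z) = 0$.
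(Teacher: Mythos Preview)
Your forward direction is essentially the paper's argument: produce an HIF $F\in\G$ for the skew-Hermitian $\A$ (this is where $\M=0$ enters), observe $F^*F$ is a first integral, and compare $F\vert_{\partial SM}$ with $\H^+(A_+w)$ up to a $U(n)$-gauge in $x$ alone.

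The converse, however, has a gap. You want to extend $f=HG$ from $\partial SM$ to some $F\in\G$ with $F^*F=\tilde w$, and you claim the bare extension $F_0\in\G$ is ``guaranteed by the Transport Oka--Grauert principle via $\M_n(Z)=0$''. But $\M=0$ says only that $\G$ acts transitively on $\mho$, i.e.\ every $\A\in\mho$ admits an HIF; it says nothing about extending prescribed fibrewise-holomorphic boundary data from $\partial SM$ to $SM$. Your subsequent ``correcting gauge $U\in\G$'' step is then left as a second moduli-type problem with no argument.

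The paper's route avoids this entirely and is worth noting because it shows the converse \emph{does not use} $\M=0$ at all. Given $w\in C_\alpha^\infty(\partial_+SM,\he_n^+)$, pass to the smooth first integral $w^\sharp\in C^\infty(SM,\he_n^+)$ and apply Birkhoff factorisation \emph{directly on $SM$} (Theorem~\ref{thmfac}\ref{fac1}) to write $w^\sharp=F^*F$ with $F\in\G$; this needs only that $M$ is a disk. Then $\A:=-(XF)F^{-1}$ is automatically a $\u(n)$-pair (your own Fourier-mode argument), and $F\vert_{\partial SM}$ differs from $\H^+(A_+w)$ by some $h_1\in C^\infty_\Id(\partial M,U(n))$ by the uniqueness clause in Birkhoff. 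Extending $h$ and $h_1$ to $M$ and replacing $F$ by $hh_1^{-1}F$ yields a $\u(n)$-pair with scattering data exactly $q$. No extension-from-the-boundary problem arises, and no second appeal to $\M=0$ is needed.
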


Together with Theorem \ref{PSU}, we now have a complete understanding of injectivity and range properties of the (nonlinear) non-Abelian X-ray transform $(A,\Phi)\mapsto C_{A,\Phi}$ on simple surfaces.

The theorem is restated as Theorem \ref{rangepairs}, where it is complemented by a number of further characterisations concerning in particular the range of $\Phi \mapsto C_\Phi$
and $A\mapsto C_A$, as well as the case of $\gl(n,\C)$-valued attenuations. Also a characterisation in the non-simple case is discussed.  For precise statements we refer to \textsection \ref{range}.

Let us illustrate the idea behind our range characterisations with the case of the transform $C^\infty(M,\u(n))\ni \Phi\mapsto C_\Phi$ (cf.\,Theorem \ref{rangefields} below). To produce an element in the range, we start with some function $w\in C_\alpha^\infty(\partial_+SM,\he^+_n)$. This can be extended to a smooth first integral $w^\sharp:SM\rightarrow \he^+_n$, constant along the geodesic flow. 
By Birkhoff's factorisation theorem, $w^\sharp =F^*F$ for some $F\in \G$. We now make the assumption that the $0$th Fourier mode of $F$ satisfies $F_0=\Id$, in which case the factorisation is unique. Consider $q:=F( F^{-1}\circ \alpha)\vert_{\partial_+SM} \in C^\infty(\partial_+SM,U(n))$. Then $q$ is given solely in terms of boundary data and in fact equals $q=C_\Phi$, the scattering data of the matrix field 
\begin{equation}\label{hiddenhif} \Phi = -(XF)F^{-1}\in C^\infty(M,\u(n)).
\end{equation} In particular, $q$ lies in the range of $\Phi\mapsto C_\Phi$. We prove that on a simple surface all elements in the range arise in this way by showing that {\it every} matrix field $\Phi$ is of the form \eqref{hiddenhif}. This in turn is a consequence of Theorem \ref{mainhif}.

Theorem \ref{mainrange} bears a striking resemblance with the Ward correspondence for the anti-self-dual Yang-Mills (ASDYM) equation 
by Mason in \cite{Mas06}: there a one-to-one correspondence is set up between solutions to the ASDYM equation on $\tilde{\mathbb{M}} = S^2\times S^2$ (with split signature) on the one hand and pairs $(E,H)$ on the other hand, where $E$ is a holomorphic vector bundle on a complex twistor space associated with  $\tilde{\mathbb{M}}$ and $H$ is a Hermitian metric on $E$, restricted to a real subspace. The two `parameters' $E$ and $H$ are also referred to as {\it solitonic} and {\it radiative/dispersive} degrees of freedom, respectively. Back to Theorem \ref{mainrange}, and ignoring the gauge $h$, we see that the range  of the non-Abelian X-ray transform is also parametrised by a Hermitian metric, given by $w\in C^{\infty}_{\alpha}(\partial_+SM, \he^+_n)$. Notably, there are no solitonic degrees of freedom, which is in line with the Transport Oka-Grauert principle in Theorem \ref{maintog}.

At last, let us mention a potential application of our range characterisations. In the context of Polarimetric Neutron Tomography, it has been of recent interest to rigorously study statistical algorithms for recovering a matrix field $\Phi$ from noisy measurements of $C_\Phi$ \cite{MNP19,MNP20}. In particular it was shown in \cite{BoNi21}, that if $M$ is the Euclidean unit disk, then $\Phi$ can be recovered by a statistical algorithm {\it in polynomial time}, provided there is a suitable initialiser. Knowing the range of $\Phi\mapsto C_\Phi$ is a possible starting point to construct a computable intitialiser -- we hope to address this in furture work.

\subsection*{Acknowledgements} We would like to thank 
Maciej Dunajski, Claude LeBrun, Thomas Mettler, Fran\c{c}ois Monard, Richard Nickl and Ivan Smith for their helpful comments. We are also very grateful to the referee for corrections and suggestions that improved the presentation. JB was supported by the EPSRC Centre for Doctoral Training and the Munro-Greaves Bursary of Queens' College Cambridge.
GPP was supported by EPSRC grant EP/R001898/1.

\section{Preliminaries}\label{prelim}
Here we provide some well-known background material which may be found in \cite{GK,SiTh76}; for a recent presentation and its relevance to geometric inverse problems in two dimensions we refer to \cite{PSU20}.
Throughout, $(M,g)$ is a compact, oriented two dimensional Riemannian manifold with smooth and possibly empty boundary
$\partial M$.

The unit sphere bundle $SM$ is a compact $3$-manifold with boundary $\partial SM=\{(x,v)\in SM:x\in \partial M\}$, containing  $\partial_0SM:=\partial_+SM\cap \partial_-SM$ as submanifold. 
The geodesic vector field $X$ is the infinitesimal generator of the geodesic flow $\varphi_t$ on $SM$ and for $(x,v)\in SM$ we denote with $\tau(x,v)\in[0,\infty]$ the first time $t\mapsto \varphi_t(x,v)$ exits $SM$.
The vertical vector field $V$ is defined as the infinitesimal generator of the circle action that the orientation of $M$ induces on the fibres of $SM$. The pair $X,V$ can be completed to a global frame of $T(SM)$ by considering the vector field $X_\perp:=[X,V]$. There are two further structure equations given by $[V,X_\perp]=X$ and $[X,X_\perp]=-KV$, where $K$ is the Gaussian curvature of $M$. The Sasaki metric on $SM$ is the unique Riemannian metric for which $\{X,X_\perp,V\}$ is an orthonormal frame and the volume for for this metric is denoted by $\d\Sigma^3$. The induced area form on $\partial SM$ is denoted by $\d\Sigma^2$.


If $x = (x_1, x_2)$ are isothermal coordinates in $(M,g)$ so that the metric has the form $g = e^{2\lambda(x)} \,dx^2$ and if  $\theta$ is the angle between $v$ and $\partial_{x_1}$, then in the $(x,\theta)$ coordinates in $SM$ the vector fields have the following explicit formulas:
\begin{align}
X &= e^{-\lambda}\left(\cos\theta\frac{\partial}{\partial x_{1}}+
\sin\theta\frac{\partial}{\partial x_{2}}+
\left(-\frac{\partial \lambda}{\partial x_{1}}\sin\theta+\frac{\partial\lambda}{\partial x_{2}}\cos\theta\right)\frac{\partial}{\partial \theta}\right), \label{defX}\\
X_{\perp} &= -e^{-\lambda}\left(-\sin\theta\frac{\partial}{\partial x_{1}}+
\cos\theta\frac{\partial}{\partial x_{2}}-
\left(\frac{\partial \lambda}{\partial x_{1}}\cos\theta+\frac{\partial \lambda}{\partial x_{2}}\sin\theta\right)\frac{\partial}{\partial \theta}\right), \\
V &= \frac{\partial}{\partial\theta}.
\end{align}

The space $L^2(SM,\C^n)$ is defined in terms of the measure $\d \Sigma^3$ and the standard Hermitian inner product on $\C^n$. There is an orthogonal decomposition $L^{2}(SM,\C^n)=\oplus_{k\in\mathbb Z}H_{k}$, where $H_k$ is the eigenspace of $-iV$ corresponding to the eigenvalue $k$. A function $u\in L^{2}(SM,\C^n)$ has a Fourier series expansion
\begin{equation} u=\sum_{k=-\infty}^{\infty}u_{k},\end{equation}
where $u_{k}\in H_k$. For $k\in \Z$ and $I\subset \Z$ we define
\begin{equation}\label{defOmega}
\Omega_{k}=C^{\infty}(SM,\C^n)\cap H_{k}\quad \text{and}\quad \oplus_{k\in I}\Omega_k =  C^\infty(SM,\C^n)\cap \left( \oplus_{k\in I}H_k \right) .
\end{equation}

\begin{definition} Let $u\in L^2(SM,\C^n)$.
\begin{enumerate}[label=(\roman*)]
\item  $u$ is called {\it fibrewise holomorphic}, iff $u_k=0$ for $k<0$. Similarly, $u$ is called {\it fibrewise anti-holomorphic}, iff $u_k=0$ for $k>0$.
\item $u$ is called {\it even} iff $u_k=0$ for $k\in 2\Z+1$, or equivalently iff $u(x,-v)=u(x,v)$ for all $(x,v)\in SM$. Similarly, $u$ is called {\it odd} iff $u_k=0$ for $k\in 2\Z$, or equivalently iff $u(x,-v)=-u(x,v)$ for all $(x,v)\in SM$.
\end{enumerate}
\end{definition}

We tacitly use these definitions also on $\partial SM$, noting that functions $u\in L^2(\partial SM,\C^n)$ have an analogous decomposition $u=\sum_{k\in \Z}u_k$ into Fourier modes.

As in \cite{GK} we introduce the first order operators
\begin{equation}\label{defeta}
\eta_{+},\eta_{-}:C^{\infty}(SM,\C^n)\to
C^{\infty}(SM,\C^n)
\end{equation}
\begin{equation}\eta_{+}:=(X+iX_{\perp})/2,\;\;\;\;\;\;\eta_{-}:=(X-iX_{\perp})/2.\end{equation}
Clearly $X=\eta_{+}+\eta_{-}$. 
We have
\begin{equation}\eta_{+}:\Omega_{m}\to \Omega_{m+1},\;\;\;\;\eta_{-}:\Omega_{m}\to \Omega_{m-1},\;\;\;\;(\eta_{+})^{*}=-\eta_{-},\quad [\eta_\pm,V]=\mp i\eta_\pm
.\end{equation}
In particular, $X$ has the following important mapping property
\begin{equation}X:\oplus_{k\geq 0}\Omega_{k}\to \oplus_{k\geq -1}\Omega_{k}.\end{equation}
We will often use all of the above for smooth functions taking values in complex $n\times n$ matrices, which we indistinctly denote by $\C^{n\times n}$ or $\gl(n,\C)$, if we wish to think of them as Lie algebra of $GL(n,\C)$. (We also use the notation $\Omega_k$ in the matrix valued case.)


\subsection{Factorisation theorems}\label{factorisation}

For the range characterisations below it will be important to factor $GL(n,\C)$-valued maps on $SM$ in terms of the group $\G$ from \eqref{defG}. 
This requires a bundle-version of two well-known 
factorisation theorems for loop groups that we now recall, following the notation and presentation in \cite[\textsection 8]{PS86}. 

Let us denote by $LGL_{n}(\C)$ the set of all smooth maps $\gamma:S^{1}\to GL(n,\C)$. The set has a natural structure of an infinite dimensional Lie group as explained in \cite[Section 3.2]{PS86}. This group contains several subgroups which are relevant for us. We shall denote by $L^{+}GL_{n}(\C)$ 
the subgroup consisting of those
loops $\gamma$ which are boundary values of holomorphic maps 
\[\gamma:\{z\in\C:\,\,|z|<1\}\to GL(n,\C).\]
We let $\Omega U_{n}$ denote the set of smooth loops $\gamma:S^{1}\to U(n)$ such that $\gamma(1)=\text{Id}$.
The first result we shall use is Theorems 8.1.1 in \cite{PS86}: 


\begin{theorem} Any loop $\gamma\in LGL_{n}(\C)$ can be factored uniquely as
$ \gamma=\gamma_{u}\cdot \gamma_{+},$
with $\gamma_{u}\in\Omega U_{n}$ and $\gamma_{+}\in L^{+}GL_{n}(\C)$. In fact, the product map  
\begin{equation}\Omega U_{n}\times L^{+}GL_{n}(\C)\to LGL_{n}(\C)\end{equation}
is a diffeomorphism.
\label{thm:PS}
\end{theorem}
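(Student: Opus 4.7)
The plan is to treat the uniqueness and existence of the factorization separately, and then upgrade the resulting bijection to a diffeomorphism by a tangent-space computation.

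For uniqueness, assume $\gamma_u \gamma_+ = \gamma_u' \gamma_+'$ and set $h := (\gamma_u')^{-1}\gamma_u = \gamma_+'\gamma_+^{-1}$. The right-hand expression exhibits $h$ as the boundary value of a holomorphic map $\{|z|<1\}\to GL(n,\C)$, while the left-hand expression shows that $h$ is unitary on $S^1$. Setting $\tilde h(z) := (h(1/\bar z)^*)^{-1}$ for $|z|>1$ gives a holomorphic continuation of $h$ across $S^1$ by Schwarz reflection; the resulting function is entire, $GL(n,\C)$-valued, and bounded (by compactness on the closed disk and by the reflection formula outside), so Liouville forces it to be constant. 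The normalization $\gamma_u(1)=\gamma_u'(1)=\Id$ then gives $h \equiv \Id$.

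Existence is the substantive step, and the plan is to recast it as a Riemann--Hilbert problem in the restricted Grassmannian picture. Working in $H = L^2(S^1,\C^n)=H_+\oplus H_-$, the loop $\gamma$ acts as a bounded invertible operator, and $L^+GL_n(\C)$ is precisely the stabilizer of the base point $H_+$ in the coset description of the relevant Grassmannian. One shows that the associated Toeplitz operator is Fredholm of index zero for $\gamma$ in the identity component of $LGL_n(\C)$, so that $\gamma H_+$ defines an admissible subspace. Choosing an orthonormal frame for $\gamma H_+$ whose coordinates admit bounded holomorphic extensions into the disk yields the $\gamma_+ \in L^+GL_n(\C)$ correcting $\gamma$ to a unitary loop $\gamma_u = \gamma\gamma_+^{-1}$; fixing the value at $1\in S^1$ by multiplying $\gamma_+$ on the left by a constant unitary arranges $\gamma_u\in\Omega U_n$. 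An alternative route, closer to the classical Birkhoff theorem, is to first establish the analogous statement for Laurent-polynomial loops by linear algebra and then extend to smooth loops by density combined with the implicit function theorem on the subspace where the factorization is already known.

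Finally, to promote the continuous bijection $\Omega U_n \times L^+GL_n(\C) \to LGL_n(\C)$ to a diffeomorphism, I would compute the derivative at an arbitrary point using left translation and reduce to the identity. There one checks that the tangent-space decomposition $L\mathfrak{gl}_n(\C) = \Omega \mathfrak{u}_n \oplus L^+\mathfrak{gl}_n(\C)$ holds, which follows directly from Fourier decomposition and the fact that a loop in $\mathfrak{gl}_n(\C)$ whose negative Fourier modes are zero and whose zeroth mode is skew-Hermitian splits uniquely into its skew-Hermitian and holomorphic parts. Boundedness of the inverse decomposition in each Sobolev norm (using that Fourier projection is bounded) shows the derivative is a topological isomorphism, so the inverse function theorem in the tame Fréchet category yields the diffeomorphism claim.

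The main obstacle is the existence step: producing a holomorphic $\gamma_+$ that trivializes $\gamma$ modulo $U(n)$ is not formal and requires either the Grassmannian/Toeplitz input above (so that $L^+GL_n(\C)$ orbits exhaust $LGL_n(\C)/\Omega U_n$) or an approximation argument reducing to the polynomial Birkhoff theorem. Uniqueness and the diffeomorphism upgrade are, by contrast, essentially bookkeeping with Fourier modes and Liouville.
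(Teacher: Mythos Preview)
The paper does not supply a proof of this theorem: it is quoted verbatim as Theorem~8.1.1 of Pressley--Segal \cite{PS86} and used as a black box. So there is no ``paper's own proof'' to compare against.

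That said, your outline is essentially the Pressley--Segal argument. The uniqueness step via Schwarz reflection and Liouville is correct as written. For existence you correctly identify the substantive input: the restricted Grassmannian model $LGL_n(\C)/L^+GL_n(\C)$ and the Fredholm index-zero property of the associated Toeplitz operator are exactly what Pressley--Segal use (their Chapter~8). Your alternative route through polynomial loops plus density is also standard. The tangent-space computation for the diffeomorphism claim is right, though in practice Pressley--Segal establish smoothness of the inverse more directly from the Grassmannian description rather than invoking a Nash--Moser-type inverse function theorem; the ordinary Banach inverse function theorem in each Sobolev completion suffices here, since the decomposition $L\mathfrak{gl}_n(\C)=\Omega\mathfrak{u}_n\oplus L^+\mathfrak{gl}_n(\C)$ is bounded in every $H^s$.

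One small point to tighten: in the existence step, the phrase ``choosing an orthonormal frame for $\gamma H_+$ whose coordinates admit bounded holomorphic extensions'' hides the real work---one must show $\gamma H_+$ lies in the big cell of the Grassmannian (equivalently, that the Toeplitz operator $P_+\gamma\vert_{H_+}$ is not just Fredholm of index zero but actually invertible), which is where the based-loop normalisation and the transitivity of the $U_n$-action come in. This is genuine content, not bookkeeping, but your sketch points in the right direction.
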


The second result we shall need is the celebrated Birkhoff factorisation theorem (cf.\,\cite[Theorem 8.1.1]{PS86}), stating that loops $\gamma\in LGL_n(\C)$ can be factored as $\gamma=\gamma_- \cdot \Delta \cdot \gamma_+$, where $\gamma_-^*,\gamma_+\in L^+GL_n(\C)$ and $\Delta$ is a group homomorphism from $S^1$ into the diagonal matrices in $GL(n,\C)$. In fact, we require only a version for loops with values in the space of positive definite Hermitian matrices, denoted
\begin{equation}
\he_n^+=\{H\in \C^{n \times n}: \xi^* H \xi >0 \text{ for all } \xi \in \C^n\backslash 0\}.
\end{equation}
In this case, $\Delta$ always equals $\Id$ and the statement is equivalent to the preceding theorem. We postpone a precise formulation to Theorem \ref{thmfac} below.

Consider now a compact non-trapping surface $(M,g)$ with strictly convex boundary. It is well known that
such surfaces are diffeomorphic to a disc (cf.~\cite{PSU20}) and thus there exists a section $\1:M\rightarrow SM$ which trivialises the bundle $SM$ to $M\times S^1$. One can then perform loop group factorisations fibrewise to obtain:

\begin{theorem}\label{thmfac} Let $(M,g)$ be a non-trapping surface with strictly convex boundary.
\begin{enumerate}[label=(\roman*)]
\item \label{fac2} Any $R\in C^\infty(SM,GL(n,\C))$ can be factored as $R=UF$ (or $R=FU$) where $F\in \G$ and $U\in C^\infty(SM,U(n))$. If $R$ is even, then also $U$ and $F$ are even. Moreover, $F$ is unique up to left (or right) multiplication by a function in $C^\infty(M,U(n))$.
\item \label{fac1} Any $H\in C^\infty(SM,\he^+_n)$ can be factored as $H=F^*F$ with $F\in \G$. If $H$ is even, then also $F$ is  even. Moreover, $F$ is unique up to left multiplication by a function in $C^\infty(M,U(n))$.
\end{enumerate}
\end{theorem}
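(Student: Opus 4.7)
The plan is to reduce both parts to a parametric, fibrewise application of the Pressley--Segal factorisation in Theorem \ref{thm:PS}, using the section $\1$ to trivialise $SM \cong M\times S^1$ globally. Under this trivialisation, an element $R\in C^\infty(SM,GL(n,\C))$ becomes a smooth map $M\to LGL_n(\C)$. Since the product map in Theorem \ref{thm:PS} is a diffeomorphism of Fr\'echet manifolds, its inverse is smooth and composing with $R$ yields smooth families $x\mapsto U(x,\cdot)\in \Omega U_n$ and $x\mapsto F(x,\cdot)\in L^+GL_n(\C)$ with $R(x,\cdot)=U(x,\cdot)F(x,\cdot)$. Back on $SM$ this gives $U\in C^\infty(SM,U(n))$ and, because $F^{-1}(x,\cdot)\in L^+GL_n(\C)$ is automatic, $F\in \G$. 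The alternative factorisation $R=FU$ is obtained by applying the same argument to $R^{-1}$ and inverting.

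To handle the evenness statement I would use the squaring map $S^1\to S^1$, $z\mapsto z^2$. If $R$ is even then in the angular coordinate $\theta$ induced by $\1$ we have $R(x,\theta)=R(x,\theta+\pi)$, so $R$ descends to a smooth map $\tilde R:M\times S^1\to GL(n,\C)$ via $R(x,\theta)=\tilde R(x,2\theta)$. Applying the first paragraph to $\tilde R$ gives $\tilde R=\tilde U\tilde F$, and I set $U(x,\theta):=\tilde U(x,2\theta)$, $F(x,\theta):=\tilde F(x,2\theta)$. Both are $\pi$-periodic in $\theta$, hence even, and $F$ is fibrewise holomorphic because the boundary-value-of-disc-map $w\mapsto \tilde F(x,w)$ pulls back under $w\mapsto w^2$ to a holomorphic map on $\overline{\DD}$; its inverse is holomorphic by the same reason, so $F\in \G$.

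For uniqueness suppose $R=U_1F_1=U_2F_2$. Then $\varphi:=U_2^{-1}U_1=F_2F_1^{-1}$ lies in $\G$ and is fibrewise unitary, so $\varphi^{-1}=\varphi^*$. Since complex conjugation sends the $k$-th Fourier mode on $SM$ to the $(-k)$-th one (because $-iV$ is a real operator with real eigenvalues), $\varphi^*$ is fibrewise anti-holomorphic, and combined with $\varphi^*=\varphi^{-1}\in \G$ this forces $\varphi_k=0$ for every $k\neq 0$. Hence $\varphi\in C^\infty(M,U(n))$, giving the stated ambiguity $F_2=\varphi F_1$, $U_2=U_1\varphi^{-1}$.

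Part (ii) will then follow by setting $R:=H^{1/2}\in C^\infty(SM,\he_n^+)$, the pointwise positive Hermitian square root (smooth in $(x,v)$ because $H$ stays in the open cone $\he_n^+$); applying (i) to this Hermitian $R=UF$ produces $H=R^*R=F^*UU^{\ast}F=F^*F$, and $H$ even makes $H^{1/2}$ even so $F$ is even too. Uniqueness in (ii) follows from the same Fourier mode argument as above applied to $F_2F_1^{-1}$. The only genuine technical point is verifying that the Pressley--Segal factorisation depends smoothly on parameters, which is built into the Fr\'echet-diffeomorphism statement of Theorem \ref{thm:PS} and is the main obstacle I anticipate, but it is standard once one identifies $C^\infty(SM,GL(n,\C))$ with $C^\infty(M,LGL_n(\C))$ via $\1$.
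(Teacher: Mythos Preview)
Your proof is correct and follows essentially the same strategy as the paper: fibrewise application of the Pressley--Segal factorisation using the trivialisation by $\1$, and for part~(ii) passage to the Hermitian square root $R=H^{1/2}$ followed by $R=UF$ to get $H=F^*F$. Your uniqueness argument (that $\varphi=F_2F_1^{-1}\in\G$ is unitary, hence $\varphi^*=\varphi^{-1}$ is simultaneously fibrewise holomorphic and anti-holomorphic, forcing $\varphi\in\Omega_0$) is spelled out more explicitly than in the paper, which simply defers to \cite{PaSa20}.

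The one place where you take a genuinely different route is the evenness claim. The paper argues via the antipodal map $a(x,v)=(x,-v)$: if $R$ is even then $UF=R=R\circ a=(U\circ a)(F\circ a)$, and uniqueness up to gauge (together with the normalisation $U(x,\1(x))=\Id$) forces $U$ and $F$ to be even. You instead descend through the double cover $z\mapsto z^2$: an even $R$ factors through $\tilde R(x,z^2)$, you factor $\tilde R=\tilde U\tilde F$ on the quotient circle, and pull back. Your approach is more direct and avoids the slightly delicate gauge bookkeeping; the paper's approach has the advantage of not introducing an auxiliary circle but leans on the normalisation in a way that is left implicit. Both are valid.
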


\begin{proof}
Part \ref{fac2}, modulo the statement on even functions, follows from Theorem \ref{thm:PS}, applied to the loop $R(x,\cdot)$ for each $x\in M$. 
Normalising such that $U(x,\1(x))=\Id$, the resulting factors $U$ and $F$ are smooth on $SM$ -- we refer to Theorem 4.2 in \cite{PaSa20} and its proof for more details. Now suppose that $R=UF$ is even. Denoting $a:SM\rightarrow SM$ the antipodal map, defined by $a(x,v)=(x,-v)$, we then have $UF = R = R\circ a= (U\circ a)(F\circ a)$. As the factorisation is unique up to gauge, there exists a function $h\in C^\infty(M,U(n))$ with $U=(U\circ a)h$ and $F=h^*(F\circ a)$. Consequently $U$ and $F$ must be even.

For \ref{fac1} note that any $H\in C^\infty(SM,\he^+_n)$ admits a square root, i.e.~there exists an $R\in C^\infty(SM,GL(n,\C))$ with $H=R^*R$. Using \ref{fac2}, we may decompose $R=UF$, with $U$ unitary and $F\in \G$ and thus $H=F^*U^*UF=F^*F$, as desired. The uniqueness claim follows from the observation that if we had two factorizations $F^*F=\tilde{F}^*\tilde{F}$, then $(\tilde{F}^*)^{-1}F^*=\tilde{F}F^{-1}$. It follows that $\tilde{F}F^{-1}$ is both fibrewise holomorphic and antiholomorphic and thus equal to $h\in C^{\infty}(M,U(n))$.

\end{proof}

\begin{remark}\label{boundaryfac} 
There is a boundary version of the theorem in terms of the group $\mathbb{H}=\{f=F\vert_{\partial SM}: F\in \G\} = \{f\in C_\Id^\infty(\partial SM,GL(n,\C)): f\text{ is fibrewise holomorphic}\}$. Here and below the subscript $\Id$ 
refers to maps that are homotopic to $\Id$. 
 Indeed, all of the following maps are surjective and injective up to a gauge in $C^\infty_\Id(\partial M,U(n))$:
\begin{align}
C^\infty_\extend  (\partial SM,U(n))\times \HH \rightarrow C^\infty_\extend(\partial SM,GL(n,\C)),&\quad (u,f)\mapsto uf\label{bfac2} \\
\HH\times C^\infty_\extend(\partial SM,U(n)) \rightarrow C^\infty_\extend(\partial SM,GL(n,\C)),&\quad (u,f)\mapsto fu \label{bfac3}\\
\HH \rightarrow  C^\infty(\partial SM,\he_+^n),&\quad f\mapsto f^*f \label{bfac1}
\end{align} 
Here $C_\extend^\infty$ stands for smooth maps $r$ which have a $GL(n,\C)$-valued extension to all of $SM$, or equivalently for which the induced homomorphism $r_*:\Z\times \Z\rightarrow \Z$ between fundamental groups satisfies $r_*(1,0)=0$. 
(Since $\partial SM=\partial M\times S^{1}$ is a torus and $GL(n,\C)$ has fundamental group $\mathbb{Z}$,
the map $r$ induces a homomorphism $r_{*}:\mathbb{Z}\times\mathbb{Z}\to \mathbb{Z}$.)
To show \eqref{bfac2}-\eqref{bfac1}, we extend $r\in C_\extend^\infty(\partial SM,GL(n,\C))$ to a function $R\in C^\infty(SM,GL(n,\C))$ and apply Theorem \ref{thmfac} to $R$ in order to find appropriate factors for $r$. We emphasise however, that the factors $u$ and $f$ in the previous display can be found pointwise for every $x\in \partial M$ by solving a Birkhoff factorisation problem in the fibre $S_xM$.
\end{remark}

\section{Matrix holomorphic integrating factors} \label{s_hif}

In this section we prove Theorem \ref{mainhif} on the existence of matrix holomorphic integrating factors on simple surfaces. Recall from the discussion below the theorem that this is equivalent to proving that the group $\G$ from \eqref{defG} acts transitively on $\mho$ from \eqref{defmho} via the rule \eqref{rule}. To show transitivity, we use the Nash-Moser inverse function theorem in the form of Theorem 2.4.1 in \cite[\textsection III]{Ham82}, which requires that:
\begin{enumerate}[label=\alph*)]
\item \label{tamea} $\G$ is a tame Fr{\'e}chet Lie group, $\mho$ is a connected, tame Fr{\'e}chet manifold and the action of $\G$ on $\mho$ is smooth tame;
\item \label{tameb} for all $\A\in \mho$, the derivative of $F\mapsto A\act F$ at $\Id$ has a tame right inverse.
\end{enumerate}
Here tameness is understood with respect to the grading  $\left(\Vert \cdot \Vert_{H^s}:s=0,1,\dots \right)$ by Sobolev norms and condition \ref{tamea} is satisfied in view of standard estimates; for more details we refer to Appendix \textsection \ref{tamesetting}.  The key condition is \ref{tameb} and we claim that the derivative in question is given by
\begin{equation}\label{action2}
\T_\A: T_\Id \G \rightarrow \mho,\quad \T_\A(H) = XH + [\A,H],
\end{equation}
with $T_\Id \G = \{G\in C^\infty(SM,\C^{n\times  n}): G \text{ fibrewise holomorphic}\}$ and $[\cdot,\cdot]$ denoting the commutator. To see this, fix $\A
\in \mho$ and consider 
$F_t=\Id + t H\in \G$ for $H\in T_\Id \G$ and small $t\in \R$. Let $s\ge 0$, then for $\vert t \vert$ sufficiently small,  the Neumann series $\sum_{k\ge 0} (-tH)^k$ converges in the Sobolev space $H^s(SM)$ and one computes that
\begin{equation}
\A\act F_t= t XH + \A + t \A H - t H \A + o_{\Vert \cdot \Vert_{H^s}}(1),\quad \text{ as  }t\rightarrow 0
\end{equation}
which yields the formula in \eqref{action2}.

The proof of Theorem \ref{mainhif} is complete, if we show that in the simple case the map $\T_\A$ in \eqref{action2} has a tame right inverse for all $\A \in \mho$. This is implied by the following proposition, which is formulated in terms of $\C^n$-valued functions -- the required right inverse for $\T_\A$ is obtained by going `one level higher', i.e.~viewing $\hat \A = [\A,\cdot]$ as attenuation with values in $\End(\C^{n\times  n})\cong \C^{n^2\times n^2}$, acting on $\C^{n^2}$-valued functions.

\begin{proposition} \label{xaonto}
 Let $(M,g)$ be a simple surface and $\A\in\mho$. Then the map
$
 (X+\A):\oplus_{k\ge 0}\Omega_k \rightarrow \oplus_{k\ge -1}\Omega_k$
 is onto and admits a right inverse $L_\A:\oplus_{k\ge -1}\Omega_k \rightarrow \oplus_{k\ge 0}\Omega_k$ obeying the tame estimate
 \begin{equation}
 \Vert L_\A f \Vert_{H^s} \lesssim \Vert f \Vert_{H^{s+1}}\quad f\in \oplus_{k\ge -1}\Omega_{k}, s\ge 0,
 \end{equation}
 where $\lesssim$ means up to a constant that depends only on $(M,g)$, $\A$ and $s$.
\end{proposition}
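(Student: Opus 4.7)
The plan is to derive both surjectivity and the tame right inverse from two ingredients: the injectivity of the attenuated X-ray transform $I_\A$ on simple surfaces \cite{PSU12, PaSa20}, and the microlocal structure of the associated normal operator $N_\A=I_\A^{*}I_\A$, which on a simple surface is a classical elliptic pseudodifferential operator of order $-1$. To establish surjectivity, given $f\in\oplus_{k\ge -1}\Omega_k$, I would first solve the transport equation $(X+\A)u^f=f$ on $SM$ with $u^f\vert_{\partial_-SM}=0$; the solution is smooth by standard non-trapping theory. The obstruction to $u^f$ being fibrewise holomorphic lies in its negative Fourier modes, and can be cancelled by subtracting a homogeneous solution $w$ (i.e.\ $(X+\A)w=0$) whose antiholomorphic part matches $(u^f)^-$. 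Existence of such a $w$, after subtracting a potential, reduces to the injectivity of $I_\A$ on the appropriate mode-restricted space, which is exactly what the PSU/PaSa theorems guarantee.

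\textbf{Construction of $L_\A$ and the tame estimate.} For a concrete right inverse obeying the stated bound, I would build $L_\A$ as a composition drawn from the normal operator machinery: an attenuated backprojection of $f$ to a section on the base $M$, followed by the parametrix of $N_\A$, which on a simple surface is elliptic of order $-1$ so that its inverse has order $+1$ and loses exactly one derivative on the Sobolev scale, and finally a holomorphic lift from $M$ back to a fibrewise holomorphic function on $SM$ via a Pestov--Uhlmann-type reconstruction. Sobolev continuity of each step then yields $\Vert L_\A f \Vert_{H^s} \lesssim \Vert f \Vert_{H^{s+1}}$, with constants depending on $\A$, $(M,g)$ and $s$ as permitted. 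Ensuring that the output actually lies in $\oplus_{k\ge 0}\Omega_k$ rather than merely in smooth sections on $SM$ is handled by projecting with the fibrewise holomorphic projector, together with the boundary factorisation of Remark \ref{boundaryfac}.

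\textbf{Main obstacle.} The technically hardest part is controlling the microlocal inversion of $N_\A$ uniformly up to the boundary $\partial M$, since $N_\A$ is only a genuine pseudodifferential operator in the interior; simplicity of $(M,g)$ is what ensures global ellipticity and well-behaved boundary regularity of its parametrix. A second subtle point is organising the construction so that the residual after the parametrix step is smoothing and can be absorbed: this is where the PSU/PaSa injectivity enters crucially, ruling out a nontrivial cokernel that would otherwise obstruct both surjectivity and the tame nature of the estimate.
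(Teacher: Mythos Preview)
Your outline has the right shape—solve $(X+\A)u=f$, then correct by a homogeneous solution—but the correction step as stated does not go through. You propose to find $w$ with $(X+\A)w=0$ whose \emph{entire} antiholomorphic part equals $(u^f)^-$; this prescribes infinitely many Fourier modes of a homogeneous solution, and neither injectivity of $I_\A$ nor ellipticity of $N_\A$ yields that. What the normal-operator analysis actually provides (via surjectivity of $(I_\A^{0,1})^*$) is a homogeneous solution with \emph{two consecutive} prescribed modes (Lemma~\ref{lemA}). The key structural step is that if $\A$ is a skew-Hermitian \emph{pair} (modes only in $\{-1,0,1\}$), then the residual $f-(X+\A)(P_{\ge0}u)$ lands in just $\Omega_{-1}\oplus\Omega_0$, so a two-mode correction suffices—after an intermediate step solving the $\mu_\pm$-equations (Lemma~\ref{lemB}) to produce the right modes $v_{-1},v_0$. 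For general $\A\in\mho$ the residual is not confined to two modes and one must first gauge to a skew-Hermitian pair via \cite[Lemma~5.2]{PaSa20}; this reduction is absent from your sketch.

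Two further issues. The solution $u^f$ with $u^f\vert_{\partial_-SM}=0$ is \emph{not} smooth on $SM$ in general—it fails near the glancing set $\partial_0SM$—so ``smooth by standard non-trapping theory'' is wrong; one needs the extension trick of Lemma~\ref{lemC}. And your concrete recipe does not parse: $f$ lives on $SM$, not on $\partial_+SM$, so there is no ``attenuated backprojection of $f$'' to feed into a parametrix of $N_\A$ (which acts on $C^\infty(M,\C^{2n})\cong\Omega_0\oplus\Omega_1$, not on $\oplus_{k\ge-1}\Omega_k$); the Birkhoff factorisation of Remark~\ref{boundaryfac} plays no role here. You are right that the single derivative loss comes from inverting the order $-1$ normal operator, but in the paper this happens inside the two-mode solver $S_{m,\A}$ of Lemma~\ref{lemA}, after embedding $M$ into a closed surface—so the boundary issue you flag as the main obstacle is bypassed rather than confronted.
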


The proposition relies on a number of lemmas that we will discuss first.
The first lemma, modulo the tame estimates, appears as Proposition 4.5 in \cite{AiAs15} and relies on the fact that the attenuated X-ray transform $I_{A,\Phi}$ is injective on $\Omega_0\oplus \Omega_1$. Recall that  $I_\A=I_{A,\Phi}:C^\infty(SM,\C^n)\rightarrow C^\infty(\partial_+SM,\C^n)$ (for a $\u(n)$-pair $\A=(A,\Phi)$) is defined by $I_\A f = u^f\vert_{\partial_+ SM}$, where $u^f:SM\rightarrow \C^n$ is the unique continuous solution (differentiable along the geodesic flow) of $(X+\A)u^f=-f$ on $SM$ and $u^f=0$ on $\partial_-SM$. 
The tame estimates can be traced back to mapping properties of the associated normal operator.

\begin{lemma} \label{lemA} Let $(M,g)$ be simple and $\A=(A,\Phi)$ a skew-Hermitian pair. Then for any $f_m+f_{m+1}\in \Omega_m\oplus \Omega_{m+1}$ there is a solution $u\in C^\infty(SM,\C^n)$ to 
\begin{equation} \label{lemA1}
(X+\A)u = 0 \quad \text{ and } \quad u_m=f_m,~u_{m+1}=f_{m+1}.
\end{equation}
The solution operator $S_{m,\A}:\Omega_m\oplus \Omega_{m+1}\rightarrow C^\infty(SM,\C^n)$, sending $f_m+f_{m+1}$ to $u=S_{m,\A}(f_m+f_{m+1})$, may be chosen to satisfy the tame estimates
\begin{equation} \label{lemA2}
\Vert S_{m,\A}(f_{m}+f_{m+1}) \Vert_{H^s} \lesssim \Vert f_{m} + f_{m+1} \Vert_{H^{s+1}},\quad f_m+f_{m+1} \in \Omega_m\oplus \Omega_{m+1}, s\ge 0,
\end{equation}
where $\lesssim$ means up to a constant that depends only on $(M,g)$,  $\A,m$ and $s$. 
\end{lemma}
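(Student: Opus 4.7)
The plan is to reduce the existence of $u$ to the injectivity of the attenuated X-ray transform $I_\A$ on $\Omega_m \oplus \Omega_{m+1}$ (a consequence of Theorem \ref{PSU} for skew-Hermitian pairs on simple surfaces) via a duality argument, and to obtain the tame Sobolev estimate from the mapping properties of the associated normal operator $N_\A := I_\A^{\,*} I_\A$.

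For existence, I would reformulate the question as surjectivity of the linear map
\begin{equation*}
E_{m,\A} : C^\infty(\partial_- SM, \C^n) \longrightarrow \Omega_m \oplus \Omega_{m+1}, \qquad h \longmapsto (u^h)_m + (u^h)_{m+1},
\end{equation*}
where $u^h \in C^\infty(SM, \C^n)$ is the unique first integral of $X + \A$ with incoming data $u^h|_{\partial_- SM} = h$. Via the Santaló formula, $E_{m,\A}$ is identified (up to weighting by the scattering data $R^0$) with the $L^2$-adjoint of $I_\A$ restricted to $\Omega_m \oplus \Omega_{m+1}$. Since Theorem \ref{PSU} yields injectivity of $I_\A$ on this subspace -- the relevant gauge kernel being trivial because nonzero $\C^n$-valued primitives on $M$ would generate gauges whose fibrewise Fourier structure is incompatible with being supported in $\Omega_m \oplus \Omega_{m+1}$ -- a standard duality argument produces the desired surjective right inverse $S_{m,\A}$.

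For the tame estimate, I would write $S_{m,\A}$ explicitly in terms of the inverse of $N_\A$, acting on the mode projections of a suitable extension of the data to $SM$. On a simple surface, the microlocal analysis alluded to in the introduction shows that $N_\A$ is a classical pseudo-differential operator of order $-1$ on the interior of $M$, elliptic and boundedly invertible on appropriate Sobolev spaces thanks to the injectivity of $I_\A$. The bound $N_\A^{-1} : H^{s-1} \to H^s$ then translates into the claimed estimate $\Vert S_{m,\A}(f_m + f_{m+1}) \Vert_{H^s} \lesssim \Vert f_m + f_{m+1} \Vert_{H^{s+1}}$, the single derivative loss reflecting the order $-1$ of $N_\A$.

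The main obstacle will be obtaining the Sobolev estimates uniformly in $s$ and up to the boundary of $SM$. The interior pseudo-differential calculus of $N_\A$ degenerates near the glancing set $\partial_0 SM$, so uniform mapping properties require a boundary-adapted approach, such as a scattering/b-calculus framework or a direct analysis near $\partial_0 SM$. Careful bookkeeping of constants depending on $(M,g)$, $\A$, $m$, $s$ is then needed to ensure that the estimate enters uniformly into the Nash-Moser iteration underlying Theorem \ref{mainhif}.
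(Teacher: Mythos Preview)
Your overall strategy is exactly that of the paper: identify the solution operator with the adjoint of the restricted attenuated X-ray transform via Santal\'o, use injectivity of $I_\A$ on $\Omega_m\oplus\Omega_{m+1}$ (which the paper cites from \cite{PSU12} rather than deducing from Theorem~\ref{PSU}), and read off the tame estimate from the order $-1$ ellipticity of the normal operator. The reduction to $m=0$ via conjugation by $e^{im\theta}$ is also what the paper does.

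The gap is precisely the one you flag in your last paragraph, and the paper resolves it by a device you do not mention. Rather than attempting any boundary or scattering calculus, the paper embeds $M$ into a \emph{closed} surface $N$, covers $N$ by simple patches $M_j$ with $M\subset M_1$, and assembles a global operator $P=\sum_j \psi_j (I^{0,1}_{\A_j})^*I^{0,1}_{\A_j}\psi_j$ on $N$. This $P$ is elliptic of order $-1$ on a closed manifold, self-adjoint hence of index zero, and injective because each $I^{0,1}_{\A_j}$ is; thus $P^{-1}:H^{s}(N)\to H^{s-1}(N)$ is bounded by standard closed-manifold elliptic theory, with no boundary to worry about. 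The solution operator is then $S_{0,\A}f = (I^{0,1}_{\A_1}\psi_1 P^{-1} Ef)^{\sharp_1}|_{SM}$, and the only remaining boundary issue---tameness of the $\sharp$-extension---is handled by observing that $I^{0,1}_{\A_1}(\psi_1\,\cdot)$ always has support in a fixed compact set $K\subset\partial_+SM_1$ disjoint from the glancing region, on which $h\mapsto h^{\sharp_1}$ is trivially $H^s$-bounded.

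Your ``standard duality argument'' for surjectivity is also not quite enough as stated: injectivity of $I_\A$ alone does not yield surjectivity of the adjoint on $C^\infty$ without a closed-range ingredient, and that is exactly what the invertibility of $P$ on the closed manifold supplies.
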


\begin{proof}
First consider the case $m=0$. Write
$
I^{0,1}_{\A}:\Omega_0\oplus\Omega_1 \rightarrow C^\infty(\partial_+SM,\C^n)
$
for the attenuated X-ray transform, restricted to $\Omega_0\oplus \Omega_1$.  This transform is injective, as the natural gauge from \cite[Theorem~1.3]{PSU12} is fixed on $\Omega_0\oplus \Omega_1$.  Indeed, if $I^{0,1}_{\A}(f)=0$ for $f\in\Omega_0\oplus\Omega_{1}$, then there is a smooth $p:M\to \C^n$ with $p|_{\partial M}=0$ such that $f=\Phi\,p+(X+A)p$. Since $f_{-1}=0$ we see that $(\eta_{-}+A_{-1})p=0$ and this gives $p=0$ via Lemma \ref{ogdisk} since any holomorphic function that vanishes on the boundary must be identically zero.

By means of Santal{\'o}'s formula the $L^2$-adjoint $(I_\A^{0,1})^*$ with respect to the measure $\langle \nu(x),v\rangle \d \Sigma^2(x,v)$ on $\partial_+SM$ can be characterised by the equivalence
\begin{equation} \label{lemA3}
f_0 + f_1 = (I^{0,1}_\A)^* h \quad \Longleftrightarrow \quad f_0=(h^\sharp)_0,~f_1 = (h^\sharp)_1.
\end{equation}
This is valid for all $h\in \mathcal{S}_\A^\infty(\partial_+SM,\C^n)$, the set of functions $h\in C^\infty(\partial_+SM,\C^n)$ for which the solution $h^\sharp$ to $(X+\A)h^\sharp = 0 $ with $h^\sharp\vert_{\partial_+SM} = h$ is smooth on all of $SM$ (cf.\,\cite[\textsection 5]{PSU15}, adding a matrix field is unproblematic). The first statement of the lemma is then the assertion that $(I_\A^{0,1})^*:\mathcal{S}_\A^\infty(\partial_+SM,\C^n)\rightarrow \Omega_0\oplus\Omega_1$ is onto. Indeed, if $f_0+f_1 = (I_\A^{0,1})^* h$, then $u=h^\sharp$ solves \eqref{lemA1}.

This assertion, together with the tame estimates,  is proved with help of the associated normal operator, which was shown to be elliptic in \cite{AiAs15}. More precisely, if we make the identification $\Omega_0\oplus \Omega_1 \cong C^\infty(M,\C^{2n})$ (which is possible after trivialising $SM$), then 
\begin{equation}
(I_\A^{0,1})^*I_\A^{0,1}:C_c^\infty(M^\interior,\C^{2n})\rightarrow C^\infty(M^\interior,\C^{2n})
\end{equation}
is an elliptic pseudodifferential operator of order $-1$ \cite[\textsection 4.2]{AiAs15}.  
To proceed, embed $M$ into a closed surface $(N,g)$ and cover $N$ by open subsets $U_1,\dots,U_m$ such that $M\subset U_1$,  and $M_j = \bar U_j$ is a simple surface for all $j$.  Let $\psi_1,\dots, \psi_m\in C^\infty(N,\R)$ be such that $\psi_1\equiv 1$ on $M$, $\supp \psi_j\subset M_j$ and $\sum_{j=1}^m\psi_j^2=1$ on $N$.
Further, extend $\A=(A,\Phi)$ to a pair $\A_1=(A_1,\Phi_1)$ with compact support in $SM^{\interior}_1$ and set  $\A_2=\dots=\A_m = 0$. Following the template from \cite[\textsection 8.2]{PSU20}, we see that
\begin{equation}
P= \sum_{j=1}^m \psi_j (I_{\A_j}^{0,1})^*I_{\A_j}^{0,1} \psi_j: C^\infty(N,\C^{2n})\rightarrow C^\infty(N,\C^{2n})
\end{equation}
is an elliptic pseudodifferential operator of order $-1$ on $N$,  which is self-adjoint and thus has Fredholm index zero. As each of the operators $I_{\A_j}^{0,1}$ is injective, also $P$ is injective and thus it defines a homeomorphism $P:C^\infty(N,\C^{2n})\rightarrow C^\infty(N,\C^{2n})$. For $f=f_0+f_1\in \Omega_0\oplus\Omega_1\cong C^\infty(M,\C^{2n})$ we can now define 
\begin{equation} \label{lemA4}
S_{0,\A}(f_0+f_1) = (I_{\A_1}^{0,1}\psi_1 P^{-1}(Ef))^{\sharp_1}\vert_{SM},
\end{equation}
where $E:C^\infty(M,\C^{2n})\rightarrow C^\infty(N,\C^{2n})$ is an extension operator and the map $(\cdot)^{\sharp_1}:\mathcal{S}_{\A_1}^\infty(\partial_+SM_1,\C^n)\rightarrow C^\infty(SM_1,\C^{n})$ is defined similar as above, this time with respect to $\A_1$. First note that $u=S_{0,A}(f_0+f_1)$ indeed solves \eqref{lemA1}: Write $h_1 = I_{\A_1}^{0,1}\psi_1 P^{-1}(Ef)$ and $h = u \vert_{\partial_+SM}$, then $h_1^{\sharp_1}\vert_{SM}=h^\sharp$ and thus
\begin{equation}
(I_\A^{0,1})^*h= (I_{\A_1}^{0,1})^*h_1 =  \psi_1  (I_{\A_1}^{0,1})^* I_{\A_1}^{0,1} \psi_1 \left( P^{-1}(Ef)\right)= f \quad \text{ on } M,
\end{equation}
where we used the characterisation \eqref{lemA3} and the fact that $\psi_1\equiv 1$ on $M$, while all other $\psi_j's$ vanish. Consequently, $S_{0,\A}$ is the desired solution operator and it remains to check the tame estimates.

We check tameness of the operators in \eqref{lemA4} separately. First note that the extension $E$, multiplication by $\psi_1$ and application of $I^{0,1}_\A$ satisfy the appropriate tame estimates in a Sobolev scale {\it without loss of derivatives.} For $E$ this is the content of Seeley's classical article \cite{See64} and for $I_\A^{0,1}$ this is a standard forward estimate \cite[Theorem~4.2.1]{Sha94}.  Further we have
\begin{equation}
\Vert P^{-1} g \Vert_{H^s(N)} \lesssim \Vert g \Vert_{H^{s+1}(N)},\quad g\in C^\infty(N,\C^{2n}),s\ge 0,
\end{equation}
which follows from $P:H^{s}(N,\C^{2n})\rightarrow  H^{s+1}(N,\C^{2n})$ being injective with closed range. Next,  note that $\supp I^{0,1}_{\A_1}(\psi_1 g) \subset K$ for all $g\in C^\infty(M_1,\C^{2n})$ and a fixed compact set $K\subset \partial_+SM_1$ with $K\cap \partial_0SM_1=\emptyset$.  In order to obtain the tame estimates for $S_{0,\A}$, it thus remains to show
\begin{equation} \label{lemA5}
\Vert h^{\sharp_1} \Vert_{H^s(SM_1)} \lesssim  \Vert h \Vert_{H^s(\partial_+SM_1)} \quad  h\in C_K^\infty(\partial_{+}SM_1),  s\ge 0
\end{equation}
where the subscript indicates that $\supp h \subset K$.  Let $R:SM_1\rightarrow U(n)$ be a smooth solution to $(X+\A_1)R=0$ with $R = \Id$ on $\partial_+SM_1$ (this exists, because $\A_1$ has compact support). Further, write $\psi:SM_1\to \partial_+SM_1$ for the foot-point projection,  sending $(x,v)$ to the unique point on $\partial_+SM_1$ on the same geodesic.  Then $h^{\sharp_1} = R \cdot \psi^*h$ and we conclude \eqref{lemA5} from the following mapping properties: Multiplication by $R$ is bounded $H^s(SM_1,\C^n)\rightarrow H^s(SM_1,\C^n)$ and pull-back by $\psi$ is bounded
$H_K^s(\partial_+SM_1,\C^n)\rightarrow H^s(SM_1,\C^n)$ (again subscript $K$ indicates a support restriction). This concludes the Lemma for $m=0$.

For general $m\in \Z$ the operator $S_{m,\A}$ is obtained by conjugation with $e^{im\theta}$, where the angle $\theta$ is chosen with respect to a trivialisation of $SM$.
Indeed, given $f_m+f_{m+1}\in \Omega_m\oplus \Omega_{m+1}$, let $\tilde f_0 = e^{-im\theta} f_m$,  $\tilde f_1 =e^{-im\theta} \tilde f_{m+1}$ and $\tilde \A = \A + e^{-im\theta} X(e^{im\theta})$. Put $\tilde u=S_{0,\tilde \A}(\tilde f_0 + \tilde f_1)$, then $S_{m,\A}(f_m+f_{m+1}):= u=e^{im\theta} \tilde u$ satisfies 
\begin{equation}
\begin{split}
X u &= (Xe^{im\theta}) \tilde u - e^{im\theta} (\tilde \A \tilde u) = - \A u\\
 u_m &=e^{im\theta} \tilde u_0 = f_m, \quad \text{ and } \quad u_{m+1} = e^{i(m+1)\theta} \tilde u_1 = f_{m+1},
\end{split}
\end{equation}
as desired. The tame estimates follow immediately from the case $m=0$ and the proof is complete.
\end{proof}


The next lemma is essentially a result in complex analysis:

\begin{lemma} \label{lemB}
Let $(M,g)$ be non-trapping with strictly convex boundary and $A\in \Omega^1(M,\gl(n,\C))$ a matrix valued $1$-form. Then $\mu_\pm=\eta_\pm + A_{\pm 1}: \Omega_{m}\rightarrow \Omega_{m\pm 1}$ is onto and admits a right inverse $T_{A,\pm,m}:\Omega_{m\pm 1}\rightarrow \Omega_m$ obeying the tame estimates
\begin{equation}
\Vert T_{A,\pm,m} q \Vert_{H^{s+1}} \lesssim \Vert q \Vert_{H^s}, \quad q\in \Omega_m, s\ge 0,
\end{equation}
where $\lesssim$ means up to a constant that depends only on $(M,g)$, $A,m$ and $s$.
\end{lemma}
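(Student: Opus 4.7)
The plan is to reduce the matrix equation $\mu_\pm u = q$ to a standard Cauchy-Riemann equation on sections over a simply connected planar domain, which can then be solved by the Cauchy transform combined with a matrix integrating factor.

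Since $(M,g)$ is non-trapping with strictly convex boundary, $M$ is diffeomorphic to a disk, $SM \to M$ is trivial, and one can pick a global fibre angle $\theta$ together with a single isothermal chart $z = x_1 + ix_2$ in which $g = e^{2\lambda}|\d z|^2$ and $M$ is identified with a relatively compact simply connected domain $\Omega \subset \C$. Writing $u = h(z)e^{im\theta}$ for $u \in \Omega_m$ and $A_{+1} = a(z)e^{i\theta}$ (the only Fourier mode of $A$ that contributes to $\mu_+$), a direct computation from the explicit formulas \eqref{defX} yields
\[
\eta_+(h\, e^{im\theta}) = e^{-\lambda}\bigl(\partial_z h - m h\,\partial_z \lambda\bigr)\, e^{i(m+1)\theta}.
\]
After the conformal substitution $h = e^{m\lambda}\tilde h$, the equation $\mu_+ u = q$ with $q = q_{m+1}(z) e^{i(m+1)\theta}$ becomes the matrix Cauchy-Riemann equation
\[
\partial_z \tilde h + c(z)\tilde h \;=\; \tilde q(z) \qquad\text{on } \bar\Omega,
\]
with $c = a e^\lambda \in C^\infty(\bar\Omega, \C^{n\times n})$ and $\tilde q = e^{(1-m)\lambda} q_{m+1}$; the case of $\mu_-$ is entirely analogous with $\partial_{\bar z}$ in place of $\partial_z$.

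I would then invert this equation in two steps. First, I would construct a matrix integrating factor $F \in C^\infty(\bar\Omega, GL(n,\C))$ with $\partial_z F + cF = 0$. Extending $c$ smoothly to a slightly larger simply connected open set $\tilde\Omega \supset \bar\Omega$, this amounts to trivialising a holomorphic rank $n$ vector bundle on the contractible open Riemann surface $\tilde\Omega$ (a Stein surface), which is possible by the Oka-Grauert principle (or, more elementarily, by a Banach fixed-point argument for small $c$ combined with a method-of-continuity in general). The substitution $\tilde h = F w$ then reduces the problem to the unperturbed $\partial_z$-equation $\partial_z w = F^{-1}\tilde q$, which is solved on $\Omega$ by the linear Cauchy transform
\[
(T\psi)(z) \;=\; \frac{1}{\pi}\int_\Omega \frac{\psi(\zeta)}{\bar z - \bar\zeta}\,\d A(\zeta),
\]
satisfying $\partial_z(T\psi) = \psi$ together with the classical one-derivative Sobolev gain $\|T\psi\|_{H^{s+1}(\Omega)} \lesssim \|\psi\|_{H^s(\Omega)}$ for each $s \ge 0$, valid up to the smooth boundary of $\Omega$. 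Reversing all changes of variable defines the desired right inverse $T_{A,+,m}$.

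The required tame estimate is then immediate: every operation other than $T$ is multiplication by a fixed smooth matrix- or scalar-valued function (namely the integrating factor $F^{\pm 1}$ and the conformal weights $e^{\pm m\lambda}$), and such multiplications are bounded on every Sobolev space $H^s$, while $T$ is the sole smoothing component and supplies the single gain of a derivative. The main obstacle I anticipate is the construction of the matrix integrating factor $F$ smoothly up to $\partial\Omega$: unlike the scalar case there is no direct exponential formula, and one must invoke either Oka-Grauert on a Stein neighbourhood of $\bar\Omega$ or a continuity/fixed-point argument tailored to the matrix setting. Crucially, this step relies only on classical complex analysis on a planar domain and is independent of Theorem \ref{mainhif}, so no circularity with the main results of the paper is introduced.
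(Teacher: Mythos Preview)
Your proposal is correct and follows essentially the same route as the paper: pass to global isothermal coordinates to convert $\mu_\pm$ into a matrix $\bar\partial$-type operator, remove the matrix potential with an integrating factor furnished by the Oka--Grauert principle on the disk (the paper records this separately as Lemma~\ref{ogdisk}), and then invert the scalar Cauchy--Riemann operator by the Cauchy transform, which supplies the single derivative gain. The only cosmetic difference is that the paper applies the Cauchy transform after a Seeley extension to a compactly supported function on~$\C$, whereas you integrate directly over the domain; both versions give the required $H^s\!\to\!H^{s+1}$ mapping property.
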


\begin{proof}
We only consider $\mu_-$, the result for $\mu_+$ follows by a similar method. Fix global isothermal coordinates,  such that elements in $\Omega_m$ are given by $he^{im\theta}$ for a function $h\in C^\infty(\DD,\C^n)$ (in particular $A_{\pm 1} = e^{\pm i \theta} a_{\pm }$) and the metric is $g=e^{2\lambda} g_{\mathrm{Eucl.}}$ for some conformal factor $\lambda(z)$.  Here $\DD\subset \C$ is the closed unit disk. Define $\alpha\in C^\infty(\DD,\C^{n\times n})$ by $\alpha(z) = e^{\lambda(z)} a_{-}(z)$. Then a computation similar to \cite[Lemma~6.1.8]{PSU20} yields
\begin{equation}\label{lemB2}
\mu_-\left( h e^{im \theta}\right) = e^{-(m+1)\lambda}  \bar \partial_\alpha (h e^{m\lambda}) \cdot e^{i(m-1)\theta}, \qquad h\in C^\infty(\DD)
\end{equation}
where $\bar \partial_\alpha:C^\infty(\DD,\C^n)\rightarrow C^\infty(\DD,\C^n)$ is defined by $\bar \partial_\alpha u  = \partial_{\bar z} u+ \alpha u$. As multiplication operators are tame (without loss of derivatives), it suffices to construct a tame right inverse for $\bar \partial_\alpha$.
By Lemma \ref{ogdisk} there is a solution $R\in C^\infty(\DD,GL(n,\C))$ to $\bar \partial_\alpha R = 0$. Then
$
R^{-1}\bar \partial_{\alpha} (R u) = \bar \partial_0 u$ for all $u \in C^\infty(\DD)
$
and we have reduced the problem to finding a tame right inverse for $\bar \partial_0\equiv \partial_{\bar z}$. 

It is a basic result in complex analysis that the equation $\partial_{\bar z} u = h$ over $\C$, given some $h\in C_c^\infty(\C,\C^n)$, is solved by
\begin{equation}
u(z)=Ph(z ) = - \frac{1}{2\pi i} \int_\C \frac{h(\zeta)}{\zeta - z} \d \bar \zeta \wedge \d \zeta,\quad z\in \C.
\end{equation}
A right inverse for $\bar \partial_0$ on $\DD$ is thus given by
\begin{equation}
T:C^\infty(\DD,\C^n)\rightarrow C^\infty(\DD,\C^n),\quad Tf = P(Ef)\vert_{\DD},
\end{equation}
where $E:C^\infty(\DD,\C^n)\rightarrow C^\infty(\C,\C^n)$ is a Seeley extension operator, say chosen such that $\supp Ef\subset 2\DD$ for all $f\in C^\infty(\DD,\C^n)$.  Let $\chi\in C_c^\infty(\C,\R)$ with $\chi \equiv 1$ on $\DD$, then for all $s\ge 0$ we have
\begin{equation}
\Vert Tf \Vert_{H^{s+1}(\DD)} \le \Vert \chi P (Ef) \Vert_{H^{s+1}(\C)} \lesssim \Vert E f \Vert_{H^{s}(2\DD)} \lesssim \Vert f \Vert_{H^s(\DD)}, 
\end{equation}
where we have used that $P$ is a pseudodifferential operator of order $-1$ and thus it has the mapping property $H^{s}_{c}(\C)\rightarrow H^{s+1}_\mathrm{loc}(\C)$.

The right-inverse $T_{A,-,m}$ of $\mu_-:\Omega_m\rightarrow \Omega_{m-1}$ is obtained from $T$ by conjugating with $R$ and multiplying with scalar factors as indicated in \eqref{lemB2}. In particular,  the tame estimate \eqref{lemB2} follows from to the previous display and the proof is complete.
\end{proof}

The final ingredient is a non-holomorphic version of Proposition \ref{xaonto} and follows from well-known solvability results and estimates concerning the attenuated transport equation over {\it smooth} functions.

\begin{lemma} \label{lemC}
Let $(M,g)$ be non-trapping with strictly convex boundary and suppose $\A\in C^\infty(SM,\u(n))$. Then $X+\A:C^\infty(SM,\C^{n})\rightarrow C^\infty(SM,\C^{n})$ has a right inverse $U_\A$ that obeys the tame estimates
\begin{equation}
\Vert U_\A f \Vert_{H^s(SM)} \lesssim \Vert f \Vert_{H^s(SM)}\quad s\ge 0, f\in C^\infty(SM,\C^{n}),
\end{equation}
where $\lesssim$ means that the inequality holds up to a multiplicative constant that depends only on $(M,g)$, $\A$ and $s$. 
\end{lemma}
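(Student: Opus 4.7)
The plan is to realise $U_\A$ as the natural boundary-value solution operator for the attenuated transport equation. Specifically, for $f\in C^\infty(SM,\C^n)$ I would define $U_\A f$ to be the unique function $u:SM\to \C^n$, differentiable along the geodesic flow, satisfying
\begin{equation*}
(X+\A)u = f \quad\text{on } SM, \qquad u\vert_{\partial_+ SM}=0.
\end{equation*}
Because $(M,g)$ is non-trapping with strictly convex boundary, this is solved along each geodesic by the explicit formula
\begin{equation*}
U_\A f(x,v)= -\int_0^{\tau(x,v)} E(x,v,t)\, f(\varphi_t(x,v))\, \d t,
\end{equation*}
where $E(x,v,t)\in U(n)$ denotes the matrix cocycle obtained by solving the linear ODE $\dot E + (\A\circ \varphi_t)E=0$ with $E(x,v,0)=\Id$. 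Since $\A\in \u(n)$, this cocycle is unitary, which will keep all constants under control.

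The first substantive step is to show $U_\A f \in C^\infty(SM,\C^n)$. Away from $\partial_0SM$ this is immediate from smoothness of the flow, of $\A$ and of $\tau$. At $\partial_0 SM$ the exit time has square-root behaviour, but the prescribed boundary value on $\partial_+SM$ (which contains $\partial_0SM$) is zero, so the relevant limits are compatible; this is the classical regularity result for transport equations on non-trapping manifolds with strictly convex boundary (see e.g.~\cite[Ch.~4]{Sha94} or the discussion of $u^f$ in \cite{PSU20}). I would cite this rather than re-derive it. The argument at the end of the proof of Lemma~\ref{lemA}, where $h^{\sharp_1}$ is handled via multiplication by a smooth integrating factor and pull-back by the footpoint projection, already contains all the ideas I would reuse here (with the role of $\partial_+SM_1$ swapped for $\partial_+SM$).

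The tame estimate $\Vert U_\A f\Vert_{H^s(SM)}\lesssim \Vert f\Vert_{H^s(SM)}$ is the standard forward estimate for the attenuated transport equation without loss of derivatives; it is \cite[Theorem 4.2.1]{Sha94} in essentially the form needed. To make this self-contained I would sketch the inductive commutator argument: the $s=0$ bound is immediate from Grönwall and $|E|\equiv 1$; for $s\ge 1$, one commutes $V$, $X_\perp$ through $X+\A$ using the structure equations $[X,V]=-X_\perp$, $[X,X_\perp]=KV$ and $[V,X_\perp]=X$, producing transport equations whose sources are bounded by $\Vert f\Vert_{H^s}$ plus already-controlled lower-order terms in $u$. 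Iteration yields the estimate with constants depending only on $(M,g)$, $\A$ and $s$.

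The potentially tricky point is the boundary regularity at $\partial_0 SM$, but since this is a well-established feature of non-trapping geometries with strictly convex boundary and not the focus of the present paper, I expect the proof to be short and to proceed largely by citation.
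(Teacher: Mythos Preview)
Your proposed right inverse does not actually land in $C^\infty(SM,\C^n)$. The solution $u$ with $(X+\A)u=f$ and $u=0$ on one component of $\partial SM$ is smooth on $SM\setminus\partial_0SM$, but it inherits the square-root singularity of $\tau$ at the glancing region. Concretely, on the Euclidean disk with $\A=0$ and $f\equiv 1$ your formula gives $U_\A f=-\tau$, and for $x=(1-\epsilon,0)$, $v=(0,1)$ one has $\tau(x,v)=\sqrt{2\epsilon-\epsilon^2}$, which is not even $C^1$ as $\epsilon\to 0$. The vanishing boundary condition makes $u$ continuous at $\partial_0SM$, but it does not make it smooth. The result you cite as ``classical regularity'' is not in the references: \cite[Theorem~4.2.1]{Sha94} is a forward estimate for the \emph{trace} $I_\A f=u\vert_{\partial_+SM}$, not for $u$ as a function on all of $SM$, and the discussion of $u^f$ in \cite{PSU20} makes exactly the distinction that $u^f\in C^\infty(SM\setminus\partial_0SM)$ in general.

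The paper avoids this by an extension trick: embed $M$ in a slightly larger non-trapping convex surface $M_1$, extend $\A$ to $\A_1$ with compact support in $SM_1^\interior$, apply a Seeley extension $E$ to $f$, and set $U_\A f=g_1\vert_{SM}$ where $g_1$ solves $(X+\A_1)g_1=Ef$ on $SM_1$ with $g_1=0$ on $\partial_-SM_1$. Because $\A_1$ and $Ef$ are compactly supported, $g_1$ vanishes near $\partial_0SM_1$ and is genuinely smooth; restricting to $SM$ never sees the glancing region of $M_1$. For the tame estimate the paper uses the commuting frame $\{X,P_T,P_V\}$ of \cite[Lemma~5.1]{MNP19} rather than $X_\perp,V$: since these operators commute with $X$, applying a constant-coefficient differential operator $P$ to $g_1$ gives another transport problem with source $Pf+[P,\A_1]g_1$ and the same zero boundary data, so the $L^2$ bound iterates cleanly. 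Your proposed commutation with $X_\perp,V$ would work too, but only after the extension step; on $SM$ itself the commuted equations would have nonzero boundary values and you would again face the glancing singularity.
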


\begin{proof} First assume that both $\A$ and $f$ have compact support in $SM^{\interior}$.  Then the unique  continuous solution $g:SM\rightarrow \C^{n}$ to 
\begin{equation}
(X+ \A)g = f\quad \text{ on } SM\quad \text{ and } \quad g = 0 \text{ on } \partial_-SM
\end{equation}
vanishes near the glancing region $\partial_0SM$ and consequently is smooth on $SM$.
Following \cite[Lemma 5.12]{MNP19}, we have
\begin{equation} \label{lemC2}
\Vert g \Vert_{L^2} \le \tau_\infty \cdot \Vert f \Vert_{L^2},
\end{equation}
where $\tau_\infty = \sup_{SM} \tau$.
Let $P$ be a differential operator on $SM$ of order $m\ge 0$ and with constant coefficients with respect to the commuting frame $\{X,P_T,P_V\}$ from \cite[Lemma 5.1]{MNP19}. Then $\tilde g= Pg$ is the unique solution of
\begin{equation}
(X+\A) \tilde g = Pf + [\A,P]g\quad \text{ on } SM\quad \text{ and } \quad \tilde g = 0 \text{ on } \partial_-SM,
\end{equation}
where $[\cdot,\cdot]$ denotes the commutator. As $\tilde f = Pf + [\A,P]g$ has compact support, we may apply \eqref{lemC2} to obtain
\begin{equation}
\begin{split}
\Vert P g \Vert_{L^2}  \le \tau_\infty\left(\Vert Pf \Vert_{L^2} + \Vert [\A,P]g \Vert_{L^2}\right) 
\lesssim \Vert f \Vert_{H^m} + \Vert g \Vert_{H^{m-1}},
\end{split}
\end{equation}
where we used that $[\A,P]$ is a differential operator of order $m-1$. The $H^m$-norm of $g$ can be bounded in terms of $\Vert P g \Vert_{L^2}$, if $P$ is taken uniformly elliptic.  By induction (and an interpolation argument to pass to non-integral regularities) it then follows that
$
\Vert g \Vert_{H^s} \lesssim \Vert f \Vert_{H^s}
$ for all $s\ge 0$, with implicit constant only depending on $\tau_\infty$, $s$ and $\A$.

The right inverse $U_\A$ for general $\A$ and $f$ can be obtained by a standard extension trick: Embed $M$ in the interior of a slightly large manifold $(M_1,g)$ which is also non-trapping and has strictly convex boundary and extend $\A$ to a smooth attenuation $\A_1:SM_1\rightarrow \u(n)$ with compact support in $SM_1^{\interior}$.
 Let $E:C^\infty(SM,\C^{n})\rightarrow C^\infty(SM,\C^{n})$ be a Seeley extension operator and define $U_\A f = g_1 \vert_{SM}$, where $g_1:SM_1\rightarrow \C^{n}$ is the unique solution to $(X + \A_1) g_1 = E f $ on $SM_1$ with $g_1 = 0 $ on $\partial_-SM_1$. Then by the previous considerations we have
 \begin{equation}
 \Vert U_\A f \Vert_{H^s(SM)} \le \Vert g_1 \Vert_{H^s(SM_1)} \lesssim \Vert E f \Vert_{H^s(SM_1)} \lesssim \Vert f \Vert_{H^s(SM)},
 \end{equation}
 which proves the Lemma.
\end{proof}

\begin{proof}[Proof of Proposition \ref{xaonto}]
We first give the proof for a skew-Hermitian pair $\A=(A,\Phi)$. Given $f\in \oplus_{k\ge -1}\Omega_k$,  we use Lemma \ref{lemC} to obtain a smooth solution $u$ to $(X+\A)u=f$ and consider $\tilde u = u_0 + u_1 + \dots\in \oplus_{k\ge 0}\Omega_k$. Then\begin{equation}
(X+\A) \tilde u = f - \mu_+ u_{-1} - \left(\Phi u_{-1} + \mu_+ u_{-2} \right) =: f - q_{0} - q_{-1}.
\end{equation}
By Lemma \ref{lemB} we may solve the equations $\mu_- g_0 = q_{-1}$ and $\mu_+ g_{-1} = - q_0$ with  $g_{m}\in \Omega_{m}$ ($m=-1,0$) and by Lemma \ref{lemA} there exists a smooth solution $v$ to $(X+\A)v=0$ with $v_{-1}=g_{-1}$ and $v_{0}=g_0$. Let $\tilde v = v_0 + v_1 + \dots\in \oplus_{k\ge 0}\Omega_k$, then
\begin{equation}
(X+\A) \tilde v = \mu_-v_0 - \mu_+v_{-1} = q_{-1} + q_0.
\end{equation}
In particular $L_\A f := \tilde u + \tilde v \in \oplus_{k\ge 0}\Omega_k$ defines a preimage of $f\in \oplus_{k\ge -1}\Omega_k$ under $(X+\A)$, which implies surjectivity. For the tame estimates note that
\begin{equation}
\begin{split}
\tilde u &= P_{\ge 0}\circ U_\A f\\
\tilde v &= P_{\ge 0}  \circ  S_{\A,-1} \circ (T_{A,-,0}, - T_{A,+,{-1}})\circ Q\circ U_\A f,
\end{split}
\end{equation}
where $S,T,U$ are as in the lemmas above, $P_{\ge 0}:C^\infty(SM,\C^{n\times n})\rightarrow \oplus_{k\ge 0}\Omega_k$ is the $L^2$-orthogonal projection and $Q:C^\infty(SM,\C^{m\times m})\rightarrow \Omega_{-1} \oplus \Omega_0$ is defined by
\begin{equation}
Q u = (\Phi u_{-1} + \mu_+u_{-2}) \oplus \mu_+u_{-1}.
\end{equation}
Each of these linear operators was shown to satisfy a tame estimate $\Vert \bullet  \cdot \Vert_{H^s} \lesssim \Vert \cdot \Vert_{H^{s+d}}$ of degree $d\in \R$,  which can be read off the preceding lemmas and Lemma \ref{tameproj}. Combined, we see that $L_\A$ is tame of degree $1$, as desired.

Next assume that $\A\in \mho$ is a general attenuation.  By Lemma 5.2 in \cite{PaSa20} there exists $F\in \G$ such that $\B=\A\act F\in \mho$ defines a skew-Hermitian pair $\B=(B,\Psi)$. Our previous considerations thus yields a tame right inverse $L_\B$ to $X+\B$.  It is easy to check that $L_\A = FL_\B F^{-1}$ gives a right inverse for $X+\A$, which inherits the tameness from $L_\B$. This concludes the proof.\end{proof}

By the same methods we obtain the following variant of Theorem \ref{mainhif}:

\begin{proposition}\label{hifeven} Let $(M,g)$ be a simple surface. Then any odd attenuation $\A\in \mho$ admits  {\it even} holomorphic integrating factors.
\end{proposition}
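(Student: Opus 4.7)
The plan is to mimic the Nash-Moser proof of Theorem \ref{mainhif}, restricted to the $\Z/2$-graded subspaces determined by the antipodal involution $a(x,v) := (x,-v)$. I would introduce the tame Fr{\'e}chet Lie subgroup $\G^{\even} := \{F \in \G : a^*F = F\}$ of even HIF-candidates and the tame affine subspace $\mho^{\mathrm{odd}} := \{\A \in \mho : a^*\A = -\A\}$ of odd attenuations. Since $X$ and $a^*$ anti-commute, both $F^{-1}XF$ and $F^{-1}\A F$ are odd whenever $F$ is even and $\A$ is odd, so the rule \eqref{rule} restricts to an action of $\G^{\even}$ on $\mho^{\mathrm{odd}}$, and an even HIF for $\A\in\mho^{\mathrm{odd}}$ is precisely an $F \in \G^{\even}$ with $\A\act F = 0$. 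The proposition is thus equivalent to transitivity of this restricted action. The Nash-Moser hypotheses (a) and (b) from \textsection\ref{s_hif} carry over directly, and the only nontrivial ingredient is a tame right inverse for the restricted linearization
\[
\T_\A^{\even} : T_{\Id}\G^{\even} \to \mho^{\mathrm{odd}}, \qquad H \mapsto XH + [\A,H].
\]

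Passing to endomorphism-valued functions exactly as in the main text, the existence of such a right inverse reduces to a parity-preserving analogue of Proposition \ref{xaonto}: for simple $(M,g)$ and odd $\A\in\mho$, the map $X+\A$ from the space of even elements of $\oplus_{k\ge 0}\Omega_k$ onto the space of odd elements of $\oplus_{k\ge -1}\Omega_k$ admits a tame right inverse. The auxiliary Lemmas \ref{lemA} and \ref{lemC} admit parity-preserving variants via a simple averaging trick: if $(X+\A)u = f$ with $\A$ and $f$ odd, then $a^*u$ is also a solution (using $Xa^* = -a^*X$ and $a^*\A = -\A$, $a^*f = -f$), so $\tfrac{1}{2}(u + a^*u)$ is an even solution, and the tame estimates survive because $a^*$ is an $H^s$-isometry. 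Lemma \ref{lemB} operates on a single Fourier mode and needs no modification.

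The last ingredient is a parity-preserving analogue of the reduction to a skew-Hermitian pair used in the proof of Proposition \ref{xaonto} (via \cite[Lemma 5.2]{PaSa20}): given $\A \in \mho^{\mathrm{odd}}$, there exists $F \in \G^{\even}$ such that $\A\act F$ is a skew-Hermitian pair which, being odd, has vanishing Higgs field and is therefore a connection $\B$. Re-running the original mode-by-mode construction preserves parity: to kill the top nonzero Fourier mode $\A_N$ (necessarily $N$ odd and $\ge 3$), the infinitesimal gauge $F = \Id + G_{N-1}$ with $G_{N-1}\in\Omega_{N-1}$ contributes $(\eta_+ + \mathrm{ad}_{\A_1})G_{N-1}$ to the mode-$N$ part of $\A\act F$, which can be solved via Lemma \ref{lemB} with $G_{N-1}$ even (consistent since $N-1$ is even), and the iteration proceeds on decreasing odd indices. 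Once reduced to a connection $\B$, the truncation argument of Proposition \ref{xaonto} simplifies: for even $u$, the error $(X+\B)\tilde u - f$ at mode $k=0$ vanishes automatically by parity (both $u_{-1}$ and $\B_0$ are zero), and the mode $k=-1$ error $q_{-1} = \mu_+u_{-2}$ is cancelled by adding $\tilde v = P_{\ge 0}v$ for an even $v$ produced by the parity-preserving Lemma \ref{lemA} with prescribed mode $v_0 = T_{B,-,0}(q_{-1})$ (the odd mode $v_{-1}=0$ is then automatic). All tame estimates are inherited from the preceding lemmas. The main technical obstacle is the parity-preserving version of \cite[Lemma 5.2]{PaSa20}: the iterative gauge construction must be re-done with careful bookkeeping of parities while retaining the tameness of its convergence; once this is in place, the remaining steps are essentially mechanical adaptations of the proofs in \textsection\ref{s_hif}.
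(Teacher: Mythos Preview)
Your overall strategy is sound and would work, but it is considerably more elaborate than what the paper actually does. The paper's proof is a one-liner: since an odd $\A$ is in particular an element of $\mho$, Proposition~\ref{xaonto} already furnishes a tame right inverse $L_\A:\oplus_{k\ge -1}\Omega_k\to\oplus_{k\ge 0}\Omega_k$. For odd $\A$ the operator $X+\A$ intertwines the even/odd decomposition (it sends even to odd and odd to even), so if $f$ is odd and $u=L_\A f$, the even part $P^{\even}u$ still satisfies $(X+\A)P^{\even}u=f$. Thus $L_\A^{\even}:=P^{\even}\circ L_\A$ is the desired parity-preserving right inverse, tame by Lemma~\ref{tameproj}. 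No parity-preserving versions of Lemmas~\ref{lemA}--\ref{lemC} and no even analogue of \cite[Lemma~5.2]{PaSa20} are needed; the projection is applied once, at the very end.

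What you identify as the ``main technical obstacle'' --- an even gauge $F\in\G$ reducing an odd $\A$ to a connection --- is therefore entirely bypassed. (Incidentally, your proposed mode-by-mode elimination starting from the ``top nonzero Fourier mode $\A_N$'' is not directly available, since a general $\A\in\mho$ has infinitely many nonzero modes; the actual reduction in \cite{PaSa20} goes via a Birkhoff-type factorisation of a smooth integrating factor, and a parity-preserving version would require first producing an \emph{even} smooth integrating factor --- itself a Nash--Moser problem.) Your averaging trick for Lemmas~\ref{lemA} and~\ref{lemC} is correct and is, in disguise, exactly the same projection argument; the point is simply that one may apply it once to the final $L_\A$ rather than threading parity through every step of its construction.
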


\begin{proof} As for Theorem \ref{mainhif}, the proposition is equivalent to $\G_\even=\{F\in \G: F\text{ even}\}$ acting transitively on $\mho_\text{odd}=\{\A \in \mho: \A \text{ odd}\}$. Using Nash-Moser's theorem, it suffices to prove that for all $\A\in \mho_\text{odd}$ also the map 
$
(X+\A):\oplus_{k\ge 0} \Omega_{2k} \rightarrow \oplus_{k\ge -1} \Omega_{2k+1} $ has a tame right inverse. In terms of $L_\A$ from Proposition \ref{xaonto} and the projection $P^\even : \oplus_{k\ge 0} \Omega_k \rightarrow \oplus_{k\ge 0}\Omega_{2k}$ onto even parts we define $L^\even_\A:\oplus_{k\ge -1} \Omega_{2k+1} \rightarrow \oplus_{k\ge 0}\Omega_{2k}$ by $L^\even_\A f = P^\even L_\A f$. This is a tame map, as $L_\A$ and $P^\even$ are tame (see Lemma \ref{tameproj}) and provides the desired right inverse.
\end{proof}

\section{Twistor correspondence}\label{twistor}

Let $(M,g)$ be an oriented Riemannian surface with smooth, possibly empty boundary $\partial M$. We construct a {\it twistor space} $Z$ associated to $M$, which provides a natural habitat to complexify transport problems on $SM$.

\subsection{A complex surface} \label{cxsurface} The twistor space of $(M,g)$ is a degenerate complex surface with underlying smooth manifold $
Z=\{(x,v)\in TM:g(v,v)\le 1\}.
$
The complex structure on $Z$ is described in terms of a complex distribution
\begin{equation*}
\D \subset T_\C Z\equiv TZ\otimes\C,
\end{equation*}
to be thought of as the $(0,1)$-bundle; the structure degenerates on $SM\subset \partial Z$ in the sense that there $\D\cap \bar \D \neq 0$. 
The construction of $\D$ is carried out in the following lemma, precisely in equation \eqref{donz}; we then review some standard notions from complex geometry in the degenerate context.

It is most convenient to describe the geometry of $Z$ in terms of the fibration
\begin{equation}
p\colon SM\times \DD\rightarrow Z,\quad (x,v,\omega)\mapsto (x,v\omega),
\end{equation}
where $\DD = \{\omega \in \C\colon \vert \omega \vert \le 1\}$ is the complex unit disk and the product $v\omega\in T_xM$ is explained by the complex structure that $g$ and the orientation induce on $M$. Note that $p$ is a principal $S^1$-bundle for the diagonal action $(x,v,\omega)\act e^{it} = (x,ve^{it},e^{-it}\omega)$, which has infinitesimal generator
\begin{equation}
\V(x,v,\omega) = V(x,v) + i( \bar \omega \partial_{\bar \omega}- \omega \partial_\omega), \quad (x,v,\omega)\in SM\times \DD.
\end{equation}
In particular, $SM\times \DD/S^1\cong Z$ as smooth manifolds with corners, with boundary hypersurfaces
$p(SM\times \partial \DD)\equiv SM$ and  $p(\partial SM\times \DD)$. We equip $T_\C(SM\times \DD)$  with the natural Hermitian structure given in terms of the Sasaki metric on $SM$ and the Euclidean metric on $\DD$.

\begin{lemma}[Complex structure on $Z$]\label{cxstructure} \,
\begin{enumerate}[label =(\roman*)]
\item \label{cxstructure1} The following commutator relations hold on $SM\times \DD$:
\begin{equation*}
[\omega^2 \eta_+ + \eta_-,\V] = i (\omega^2 \eta_+ + \eta_-)\quad \text{ and } \quad [\partial_{\bar \omega},\V] = i \partial_{\bar \omega}.
\end{equation*}
In particular, the complex distribution $\widetilde \D = \spn_\C \{\omega^2 \eta_+ + \eta_-,\partial_{\bar \omega}\}$ on $SM\times \DD$ is $S^1$-invariant and descends to a distribution on $Z$, denoted
\begin{equation}\label{donz}
\D = p_*(\widetilde \D).
\end{equation}
\item \label{cxstructure2} The Gram matrix of $\{\omega^2 \eta_+ + \eta_-,\bar \omega^2 \eta_- + \eta_+,\V, \partial_{\bar \omega},\partial_\omega\}$ at $(x,v,\omega)\in SM\times \DD$, denoted $G(x,v,\omega)\in \C^{5\times 5}$, satisfies
\begin{equation}\label{blowup1}
\det G(x,v,\omega) = (1 -\vert \omega \vert^4)^2/4.
\end{equation}
\item \label{cxstructure3} The distribution $\D$ from \eqref{donz} is involutive and satisfies
\begin{equation*}
\D \cap \bar \D = \begin{cases}
0 & \text{on } Z\backslash SM\\
\spn_\C X & \text{on }  SM.
\end{cases} 
\end{equation*}
In particular, $Z^\interior$ has a complex structure for which $\D = T^{0,1}Z^{\interior}$.
\end{enumerate}
\end{lemma}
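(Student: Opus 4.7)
The plan is to prove the three parts in order via direct commutator and linear-algebra calculations, closing with Newlander--Nirenberg for the complex-structure statement.

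For part \ref{cxstructure1} I would evaluate each commutator directly from $\V = V + i\bar\omega\partial_{\bar\omega} - i\omega\partial_\omega$, using the preliminary identity $[\eta_\pm,V] = \mp i\eta_\pm$ and the fact that $\eta_\pm$ commute with every function of $\omega,\bar\omega$ and with $\partial_\omega,\partial_{\bar\omega}$ (since they live on the $SM$-factor). A short Leibniz expansion gives $[\omega^2\eta_+,\V] = -i\omega^2\eta_+ + 2i\omega^2\eta_+ = i\omega^2\eta_+$---the $+2i\omega^2\eta_+$ coming from the Euler derivative $\omega\partial_\omega(\omega^2) = 2\omega^2$---together with $[\eta_-,\V] = i\eta_-$ and $[\partial_{\bar\omega},\V] = i\partial_{\bar\omega}$; summing gives the stated relations. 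Both generators of $\widetilde\D$ are therefore eigenvectors of $\mathrm{ad}_\V$, so the $S^1$-action preserves $\widetilde\D$; combined with the transversality $\V \notin \widetilde\D$ (verified in the next step), this makes $\D = p_*(\widetilde\D)$ a well-defined complex rank-$2$ distribution on $Z$.

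For part \ref{cxstructure2} I would exploit the product decomposition of the Sasaki-plus-Euclidean metric. The vectors $e_1 = \omega^2\eta_+ + \eta_-$ and $e_2 = \bar\omega^2\eta_- + \eta_+$ are tangent to the $SM$-factor, $\partial_\omega,\partial_{\bar\omega}$ to the $\DD$-factor, and $\V$ splits as $V + (i\bar\omega\partial_{\bar\omega} - i\omega\partial_\omega)$ across the two. Since $\{X,X_\perp,V\}$ is orthonormal, $\eta_\pm$ are orthogonal to $V$, and by the product structure the Gram matrix $G$ is block diagonal with a $2\times 2$ upper $(e_1,e_2)$-block and a $3\times 3$ lower $(\V,\partial_{\bar\omega},\partial_\omega)$-block. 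Using $\langle\eta_\pm,\eta_\pm\rangle_\C = 0$ and $\langle\eta_+,\eta_-\rangle_\C = 1/2$, the upper determinant evaluates to $|\omega|^4 - ((1+|\omega|^4)/2)^2 = -(1-|\omega|^4)^2/4$; with the compatible normalisation $\langle\partial_\omega,\partial_{\bar\omega}\rangle_\C = 1$ the lower block evaluates to the $\omega$-independent constant $-1$. Multiplying recovers $\det G = (1-|\omega|^4)^2/4$, and the structural takeaway is that the determinant vanishes exactly on $\partial^0 Z = \{|\omega|=1\}$.

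For part \ref{cxstructure3}, involutivity of $\widetilde\D$ reduces to the single bracket $[\omega^2\eta_+ + \eta_-,\partial_{\bar\omega}]$, which vanishes because $\partial_{\bar\omega}(\omega^2) = 0$ and $\eta_\pm$ commute with $\partial_{\bar\omega}$. For the intersection, observe $\bar{\eta_+} = \eta_-$ and $\overline{\partial_{\bar\omega}} = \partial_\omega$, so $\overline{\widetilde\D}$ is spanned by the remaining vectors $\bar\omega^2\eta_- + \eta_+$ and $\partial_\omega$ from part \ref{cxstructure2}. Away from $|\omega|=1$ the Gram determinant is nonzero, forcing $\widetilde\D \oplus \overline{\widetilde\D} \oplus \C\V = T_\C(SM\times\DD)$; pushing forward via $p_*$, which kills only $\C\V$, gives $\D\cap\bar\D = 0$ on $Z^\interior$. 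At $|\omega|=1$ the identity $\bar\omega^2\eta_- + \eta_+ = \bar\omega^2(\omega^2\eta_+ + \eta_-)$ shows $e_1$ lies in both $\widetilde\D$ and $\overline{\widetilde\D}$, while $\partial_\omega,\partial_{\bar\omega}$ remain transverse to both, so $\widetilde\D\cap\overline{\widetilde\D} = \C\cdot e_1$. At $\omega=1$ this line equals $\mathrm{span}(\eta_+ + \eta_-) = \mathrm{span}(X)$, and the $S^1$-equivariance of $e_1$ from part \ref{cxstructure1} transports this to any $\omega = e^{i\alpha}$: the flow carries $e_1|_{(x,v,e^{i\alpha})}$ to $e^{i\alpha} X|_{\varphi_\alpha^V(x,v)}$, yielding $\D\cap\bar\D = \mathrm{span}(\varrho_*(X))$ throughout $\partial^0 Z$. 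The complex-structure statement on $Z^\interior$ then follows from Newlander--Nirenberg applied to the involutive rank-$2$ distribution $\D$ with $\D\cap\bar\D = 0$ and $\D+\bar\D = T_\C Z^\interior$: the almost-complex structure defined by $J = -i$ on $\D$ and $J = +i$ on $\bar\D$ is integrable and has $T^{0,1}Z^\interior = \D$. The only nontrivial bookkeeping concern is keeping the Hermitian vs.\ complex-bilinear conventions straight for the Gram matrix in part \ref{cxstructure2}; everything else collapses to three commutator computations, the identity $\bar{e_1} = \bar\omega^2 e_1$ at $|\omega|=1$, and a dimension count.
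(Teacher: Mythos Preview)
Your proof is correct in substance and follows essentially the same route as the paper: direct commutator computations for \ref{cxstructure1}, a block Gram-matrix calculation for \ref{cxstructure2}, and involutivity plus a dimension count (closed off by Newlander--Nirenberg) for \ref{cxstructure3}.

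The one place where your treatment diverges is the convention in \ref{cxstructure2}. The paper equips $T_\C(SM\times\DD)$ with the \emph{Hermitian} extension of the Sasaki-plus-Euclidean metric (stated immediately before the lemma), for which $\langle\eta_\pm,\eta_\pm\rangle=1/2$ and $\langle\eta_+,\eta_-\rangle=0$; this yields the $2\times 2$ block $\begin{pmatrix}(1+|\omega|^4)/2&\omega^2\\\bar\omega^2&(1+|\omega|^4)/2\end{pmatrix}$ and a $3\times 3$ block with determinant $1$. You instead use the \emph{complex-bilinear} extension, where $\langle\eta_\pm,\eta_\pm\rangle=0$ and $\langle\eta_+,\eta_-\rangle=1/2$, obtaining different block entries. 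With the standard Euclidean metric on $\DD$ your bilinear computation actually gives $\det G=(1-|\omega|^4)^2/16$; you recover the factor $1/4$ only by imposing the nonstandard normalisation $\langle\partial_\omega,\partial_{\bar\omega}\rangle=1$. So to prove the formula \emph{as stated} you must use the Hermitian form---you flag this as the bookkeeping concern, and it is the right concern to flag. That said, either convention shows that $\det G$ vanishes precisely on $\{|\omega|=1\}$, which is all that \ref{cxstructure3} requires.

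In \ref{cxstructure3} your direct argument at $|\omega|=1$ (via $e_2=\bar\omega^2 e_1$ and transversality of $\partial_\omega,\partial_{\bar\omega}$) is slightly cleaner than the paper's, which instead reads off the rank deficiency from the explicit Hermitian block form. Both arguments are valid; yours has the advantage of not relying on the specific Gram entries.
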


\begin{proof}
For \ref{cxstructure1} we use the structure equation  $[\eta_\pm,V]=\mp i \eta_\pm$ to the effect that
\begin{equation*}
[\omega^2 \eta_+ + \eta_-,\V] = \omega^2 [\eta_+,V] + [\eta_-,V] + [\omega^2 \eta_+,-i \omega \partial_\omega] = -i \omega^2 \eta_+ + i\eta_- + 2i \omega^2 \eta_+,
\end{equation*}
which gives the first relation; the second one is obvious. To check $S^1$-invariance, denote by $\xi$ either of the two vector fields $\omega^2 \eta_+ + \eta_-$ or $\partial_{\bar \omega}$ and define, for $t\in \R$,
\begin{equation*}
\xi_t(x,v,\omega) = \d \varphi_{-t}^\V\left(\xi( \varphi^\V_t(x,v,\omega))\right).
\end{equation*}
The Lie derivative of $\xi$ along $\V$ equals $\L_V\xi=-[\xi,\V]=-i\xi$. Hence  $({\d }/{\d t})\xi_t = -i\xi_t$ for all $t\in \R$, which means that $\xi_t=\exp(-it)\xi_0$ and thus the complex line bundle spanned by $\xi$ is $S^1$-invariant.

For \ref{cxstructure2} one checks, e.g., that
\begin{equation*}
\langle \omega^2\eta_+ + \eta_-, \omega^2\eta_+ + \eta_- \rangle =  \vert \omega \vert^4\cdot   \langle \eta_+,\eta_+\rangle + \langle \eta_-,\eta_-\rangle =\left(  \vert \omega \vert^4 + 1\right)/2,
\end{equation*}
where we used that $\sqrt 2 \eta_+$ and $\sqrt 2 \eta_-$ are orthonormal. Proceeding similarly with the other combinations, one sees that $G(x,v,\omega)$ is a block matrix with blocks
\begin{equation}\label{grammatrix}
\left[ \begin{matrix}
(1 + \vert \omega \vert^4)/2 & \omega^2\\
\bar \omega^2 & (1+\vert \omega \vert^4)/2
\end{matrix}\right]
\quad \text{ and } \quad 
\left[
\begin{matrix}
1 + 2\vert \omega \vert^2 & i \bar \omega & -i \omega\\
-i \omega & 1 & 0 \\
i \bar\omega & 0 & 1
\end{matrix}
\right],
\end{equation}
and the expression for $\det G(x,v,\omega)$ follows by a simple computation.

For \ref{cxstructure3}, note that $\omega^2 \eta_+ + \eta_-$ and $\partial_{\bar \omega}$ commute on $SM\times \DD$, hence $\widetilde \D$ and consequently $\D$ are involutive. On $p^{-1}(Z\backslash SM)=\{\vert \omega \vert < 1\}$ we have $\widetilde \D\cap \overline{\widetilde \D}=0$ due to  \ref{cxstructure2}, which implies that $\D\cap \bar \D=0$ away from $SM$. Further,
\begin{equation*}
X = \eta_+ + \eta_- \in \widetilde \D\cap \overline {\widetilde \D}\quad \text{ on } \{\omega = 1\}
\end{equation*}
and as $p\vert_{\{\omega = 1\}}\colon SM\times \{1\}\rightarrow SM$ is the identity, this  implies $X\in \D \cap \bar \D$ on $SM$. The dimension of $\D\cap \bar \D$ at $[(x,v,\omega)]$ equals the deficiency of $G(x,v,\omega)$, which is $1$ by \eqref{grammatrix}, hence $\D \cap \bar \D$ is indeed spanned by $X$ on $SM$. Finally, note that in the interior of $Z$ we have $T_\C Z = \D \oplus \bar \D$ and this induces a unique complex structure $J$ as follows: for $(x,v)\in Z^\interior$ define $J_{(x,v)}\colon (T_\C Z)_{(x,v)} \rightarrow (T_\C Z)_{(x,v)}$ by $J_{(x,v)}(w_1\oplus w_2) = -i w_1 + i w_2$, where $w_1\oplus w_2$ is the unique decomposition into $\D$ and $\bar \D$--components. It is straightforward to verify that $J$ preserves the {\it real} tangent space $TZ^\interior$
and by construction it is an almost complex structure with $T^{0,1}Z^\interior =\D$. Involutivity of $\D$ is equivalent to the formal integrability of $J$ and thus the Newlander-Nirenberg theorem implies that $J$ is a complex structure.
\end{proof}

The preceding lemma shows that $Z^{\interior}$ is a complex surface in the classical sense, but with complex structure degenerating at $SM$. Nevertheless the $\bar \partial$-complex of $Z^\interior$ can be extended to all of $Z$ in a way that is smooth up the boundary. We will built this extended $\bar \partial$-complex from scratch and show {\it a posteriori} that  it coincides with the standard one in the interior. On an open set $U\subset Z$, we define
\begin{equation}\label{dbar}
\Omega^0(U) \xrightarrow{\bar \partial} \Omega^{0,1}(U) \xrightarrow{\bar \partial} \Omega^{0,2}(U)
\end{equation}
as follows: note that the spaces $C^\infty(U)$ and $C^\infty(p^{-1}(U))$ are well defined also when $U\cap \partial Z\neq \emptyset$, and contain $\C$-valued functions, smooth up to the boundary. Then
\begin{eqnarray*}
\Omega^0(U) &:= &   \{h\in C^\infty(p^{-1}(U)): \V h = 0\} 
\cong C^\infty(U) \\ 
\Omega^{0,1}(U)&:= &\{(h_1,h_2)\in C^\infty(p^{-1}(U))^2: \V h_j+ i h_j = 0 ~ (j=1,2)\}\\
\Omega^{0,2}(U) & := & \{h\in C^\infty(p^{-1}(U)): \V h + 2i h = 0  \}
\end{eqnarray*}
and we define
\begin{equation}\label{dbar2}
\bar \partial h :=\left((\omega^2 \eta_+ + \eta_- )h, \partial_{\bar \omega} h\right)\quad \text{ and } \quad \bar \partial (h_1,h_2) := (\omega^2 \eta_+ + \eta_-) h_2 - \partial_{\bar \omega} h_1,
\end{equation}
noting that $\bar \partial$ has the mapping properties indicated in \eqref{dbar} in view of part \ref{cxstructure1} of the preceding lemma. See Lemma \ref{lemcoo2} for a description of $\bar \partial$ in coordinates. If $U\cap \partial Z = \emptyset$, then we recover the usual $\bar \partial$-complex of the complex surface $Z^{\interior}$, via isomorphisms
\begin{equation}\label{isotostandard}
\Omega^{0,q}(U)\cong \{\alpha\in \Omega^q(U):\alpha \vert_{\bar \D} = 0\},\quad q=1,2,
\end{equation}
exhibited in the following lemma.

\begin{lemma}[Comparison with standard $\bar \partial$-complex]\label{lemisotoast} Let $U\subset Z^\interior$ be open and consider on $p^{-1}(U)\subset \{\vert \omega \vert <1 \} $ the complex $1$-forms 
\begin{equation}
\tau= \frac{1}{1- \vert \omega \vert^4} \left(\eta_-^\vee - \bar \omega^2 \eta_+^\vee\right)\quad \text{ and } \quad \gamma = \d \bar \omega - i \bar \omega V^\vee, 
\end{equation}
where $\{\eta_+^\vee,\eta_-^\vee,V^\vee\}$ is the coframe on $SM$ that is dual to $\{\eta_+,\eta_-,V\}$.  Then:
\begin{enumerate}[label=(\roman*)]
\item \label{isotoast1} The following duality relations hold true:
\begin{equation*}
\left.\begin{array}{lcr}
\tau(\omega^2\eta_++\eta_-) =\gamma(\partial_{\bar \omega})&\equiv& 1\\
\tau(\partial_{\bar \omega}) =\gamma(\omega^2\eta_++\eta_-)&\equiv& 0
\end{array}\right.
\quad \text{ and } \quad \tau,\gamma = 0 \text{ on } \overline{ \widetilde \D}\oplus \spn \V
\end{equation*}
\item \label{isotoast2} For $(h_1,h_2)\in \Omega^{0,1}(U)$ and $h\in \Omega^{0,2}(U)$ the differential forms
$h_1\tau + h_2 \gamma$ and $h\tau \wedge \gamma$
are $S^1$-invariant and the maps
\begin{equation} \label{isotoast3}
(h_1,h_2)\mapsto p_*(h_1\tau + h_2 \gamma)\quad \text{ and } \quad h\mapsto p_*(h\tau \wedge \gamma)
\end{equation}
yield isomorphisms as in \eqref{isotostandard} for $q=1$ and $q=2$, respectively.
\item \label{isotoast4} The isomorphisms from \ref{isotoast2} intertwine the $\bar \partial$-operators from \eqref{dbar2} with the standard $\bar \partial$-operators of the complex surface $U$.
\end{enumerate} 
\end{lemma}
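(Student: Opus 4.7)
The plan is to treat the three parts sequentially. Throughout I use the structure equations $\L_V \eta_\pm = \pm i\eta_\pm$ (equivalent to $[\eta_\pm,V] = \mp i\eta_\pm$), the explicit form $\V = V + i\bar\omega\partial_{\bar\omega} - i\omega\partial_\omega$, and the $\omega$-independence of $\eta_\pm^\vee$ and $V^\vee$.

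For (i), all pairings follow by direct evaluation with $\eta_\pm^\vee(\eta_\pm)=1$ and $V^\vee(V)=1$: the normalisation $(1-|\omega|^4)^{-1}$ in $\tau$ is precisely what produces $\tau(\omega^2\eta_+ + \eta_-)=1$ via the cancellation $\eta_-^\vee(\eta_-) - \bar\omega^2\omega^2\eta_+^\vee(\eta_+) = 1-|\omega|^4$, and the other $\tau$-pairings with vectors in $\widetilde\D$ or $\overline{\widetilde\D}$ vanish by matching cancellation of $\bar\omega^2$-terms. For $\gamma$, all pairings with $\widetilde\D$ and $\overline{\widetilde\D}$ are immediate since $\gamma$ only involves $d\bar\omega$ and $V^\vee$, while $\gamma(\V) = d\bar\omega(i\bar\omega\partial_{\bar\omega}) - i\bar\omega V^\vee(V) = i\bar\omega - i\bar\omega = 0$.

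For (ii), the crucial input is that $\tau$ and $\gamma$ both have $\V$-weight $+1$, i.e.~$\L_\V \tau = i\tau$ and $\L_\V\gamma = i\gamma$. I verify this from $\L_\V \eta_\pm^\vee = \mp i\eta_\pm^\vee$ (dual to the structure relations, and $\eta_\pm^\vee$ is $\omega$-independent), $\L_\V \bar\omega = i\bar\omega$, $\L_\V|\omega|^4 = 0$, and $\L_\V V^\vee = 0$ (the last by Cartan's formula, using $\iota_V dV^\vee = 0$). Consequently, when $h_j$ has weight $-1$ and $h$ has weight $-2$ — exactly the defining conditions of $\Omega^{0,1}(U)$ and $\Omega^{0,2}(U)$ — the forms $h_1\tau + h_2\gamma$ and $h\tau\wedge\gamma$ are $S^1$-invariant; combined with horizontality from (i), they descend through the principal $S^1$-bundle $p$. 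For surjectivity onto the right-hand side of \eqref{isotostandard}, I pull an $\alpha\in\Omega^q(U)$ with $\alpha|_{\bar\D}=0$ back to $p^{-1}(U)$: the result is invariant, horizontal, and vanishes on $\overline{\widetilde\D} \oplus \spn\V$, so it is determined by its values on $\widetilde\D$, which via the dual coframe $\{\tau,\gamma\}$ from (i) yields unique coefficients whose weights are forced to be $-1$ (resp.~$-2$) because $\V$-weights add under pairing.

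For (iii), I check intertwining on generators. For $h\in \Omega^0(U) = C^\infty(U)$, the standard $(dh)^{0,1}$ is characterised by its values on a frame of $\D$, which lifts to $\omega^2\eta_++\eta_-$ and $\partial_{\bar\omega}$ and gives $(\omega^2\eta_++\eta_-)h$ and $\partial_{\bar\omega}h$ — precisely the components of $\bar\partial h$ in \eqref{dbar2}. For $\alpha\in\Omega^{0,1}(U)$ with $p^*\alpha = h_1\tau + h_2\gamma$, Cartan's formula on the same frame gives
\[
d(h_1\tau + h_2\gamma)(\omega^2\eta_++\eta_-, \partial_{\bar\omega}) = (\omega^2\eta_++\eta_-)h_2 - \partial_{\bar\omega}h_1,
\]
since $[\omega^2\eta_++\eta_-, \partial_{\bar\omega}] = 0$ (the $\eta_\pm$ are $\omega$-independent and $\omega^2$ is $\bar\omega$-independent) and the boundary terms reduce by the duality in (i); the normalisation $(\tau\wedge\gamma)(\omega^2\eta_++\eta_-,\partial_{\bar\omega})=1$ then identifies this with $\bar\partial(h_1,h_2)$ in \eqref{dbar2}. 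I expect the only obstacle to be bookkeeping — tracking $\V$-weights and verifying compatibility of pullback and descent — with no genuine analytic difficulty and no use of the simplicity hypothesis.
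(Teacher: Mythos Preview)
Your proof is correct and follows essentially the same approach as the paper. The only minor stylistic difference is in verifying $\L_\V\tau=i\tau$ and $\L_\V\gamma=i\gamma$: you compute directly from the weights $\L_\V\eta_\pm^\vee=\mp i\eta_\pm^\vee$, $\L_\V\bar\omega=i\bar\omega$, etc., whereas the paper argues indirectly from the constancy of $\tau(\xi)$ on the frame $\{\omega^2\eta_++\eta_-,\bar\omega^2\eta_-+\eta_+,\V,\partial_{\bar\omega},\partial_\omega\}$ combined with the relations $[\xi,\V]=i\xi$ from Lemma~\ref{cxstructure}\ref{cxstructure1}; both routes are equally valid and the remaining arguments (descent of basic forms, inverse via the coframe $\{\tau,\gamma,\bar\tau,\bar\gamma,\V^\vee\}$, and Cartan's formula for the intertwining) coincide.
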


\begin{proof}
The proof of \ref{isotoast1} is a simple computation that we omit. As a consequence $\tau(\xi)$ is constant for $\xi\in \{\omega^2 \eta_+ + \eta_-,\bar \omega^2 \eta_- + \eta_+,\V, \partial_{\bar \omega},\partial_\omega\}$ and, taking Lie derivatives, we see that
\begin{equation*}
0 = \L_\V(\tau(\xi)) = \L_\V\tau(\xi) + \tau(\L_\V\xi)=(\L_\V\tau - i\tau)(\xi),
\end{equation*}
where in the last step we used Lemma \ref{cxstructure}\ref{cxstructure1}, noting that while e.g., $\L_\V(\partial_{\omega})=+i\partial_\omega$, the equality still holds true as $\tau(\partial_\omega)=0$. By Lemma \ref{cxstructure}\ref{cxstructure2} such $\xi$'s form a frame over $p^{-1}(U)$ and thus $\L_\V\tau =i\tau$. Arguing similarly, also $\L_\V\gamma=i\gamma$ follows.
This implies, e.g., that
\begin{equation}\label{isotoast5}
\L_\V(h_1\tau)= \V h_1 \tau + h_1 \L_\V \tau  = - i h_1 \tau + i h_1  \tau = 0
\end{equation}
and overall we obtain the desired $S^1$-invariance. Hence 
$
\alpha=p_*(h_1\tau + h_2 \gamma)$ and $\alpha' =  p_*(h\tau \wedge \gamma)$ are well defined differential forms on $U$. Using  part \ref{isotoast1}  we see that $\alpha,\alpha' = 0$ on $\bar \D$ such that \eqref{isotoast3} indeed defines a map as in \eqref{isotostandard}.

 We obtain inverse maps as follows: Given $\alpha \in 
\Omega^1(U)$, we can express its lift $p^*\alpha$ in terms of the $1$-forms 
$\{\tau,\gamma,\bar \tau,\bar \gamma, \V^\vee \}$ (with $\V^\vee$ defined similarly to $V^\vee$), which frame $T^*_\C (p^{-1}(U))$ by part \ref{isotoast1}. If $\alpha \vert_{\bar \D} = 0$, then only the $\tau$- and $\gamma$-coefficients of $p^*\alpha$ are nonzero, which is to say that $p^*\alpha = h_1 \tau + h_2 \gamma$ for some $h_1,h_2\in C^\infty(p^{-1}(U))$. One computes that
\begin{equation}
0=\L_\V(p^*\alpha) = (\V h_1 + ih_1) \tau + (\V h_2 + i h_2) \gamma,
\end{equation}
hence $(h_1,h_2)\in \Omega^{0,1}(U)$ and we have found the desired preimage of $\alpha$. The argument for $q=2$ is completely analogous.

For \ref{isotoast4} consider $f\in \Omega^0(U)$ with lift $h=p^*f$. Then $\bar \partial f \in \Omega^1(U)$ (in the classical sense) is uniquely defined by $\bar \partial f = \d f$ on $\D$ and $\bar \partial f = 0$ on $\bar \D$, hence
\begin{equation}
p^*(\bar \partial f) = \begin{cases}
p^*(\d f) & \text{ on } \widetilde \D\\
0& \text{ on } \overline{\widetilde \D} \oplus \spn \V.
\end{cases}
\end{equation}
Thus $p^*(\bar\partial f) = h_1 \tau + h_2 \gamma$, where $h_1=(p^*\d f)(\omega^2 \eta_+ + \eta_-)=(\omega^2 \eta_++\eta_-)h$ and $h_2 = p^*\d f(\partial_{\bar \omega})= \partial_{\bar \omega} h$ -- this gives the desired intertwining property on $\Omega^0(U)$. Similarly one shows that if $\alpha \in \Omega^1(U)$ with $\alpha \vert_{
\bar \D} = 0$ and lift $p^*\alpha = h_1 \tau + h_2 \gamma$, then $\bar \partial \alpha\in \Omega^2(U)$ (in the classical sense) satisfies $p^*(\bar \partial \alpha) = h \tau \wedge \gamma$ with \begin{equation*}
h=p^*(\d \alpha)(\xi,\partial_{\bar \omega}) = \d(h_1 \tau)(\xi,\partial_{\bar \omega}) + \d (h_2 \gamma)(\xi,\partial_{\bar \omega}),
\end{equation*}
where $\xi = \omega^2\eta_++\eta_-$. The right hand side is easily computed in view of part \ref{isotoast1} and $[\xi,\partial_{\bar \omega}]=0$ and one obtains $h=\xi h_2 - \partial_{\bar \omega}h_1$, as desired.
\end{proof}

\begin{definition} A function $f\in \Omega^0(U)$ on an open set $U\subset Z$ is called {\it holomorphic}, if $\bar \partial f =0\in \Omega^{0,1}(U)$. We then write $f\in \O(U)$. 
\end{definition}

We emphasise that holomorphic functions on $Z$ are -- by definition -- smooth up the boundary. We can now draw the first connection to transport problems on $SM$.

\begin{proposition}[Twistor correspondence A] \label{tcA} There is a one-to-one correspondence between holomorphic functions on $Z$ and fibrewise holomorphic first integrals on $SM$, implemented by the map
\begin{equation}\label{correspondenceA}
\O(Z) \xrightarrow{\sim} \{u\in \oplus_{k\ge 0}\Omega_k:Xu= 0 \}\subset C^\infty(SM),\quad f\mapsto f\vert_{SM}.
\end{equation}
\end{proposition}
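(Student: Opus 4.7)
The plan is to lift holomorphic functions from $Z$ to the cover $SM \times \DD$, where they become simultaneous solutions of three transport-type equations, and then to use the Taylor expansion in $\omega$ to match them with fibrewise holomorphic first integrals on $SM$.

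First I would note that a smooth function $f$ on $Z$, with lift $h := p^* f$, is holomorphic precisely when $h$ satisfies
\begin{equation*}
\V h = 0, \qquad \partial_{\bar\omega} h = 0, \qquad (\omega^2 \eta_+ + \eta_-)h = 0.
\end{equation*}
Given such an $h$, holomorphicity in $\omega$ yields a Taylor expansion $h(x,v,\omega) = \sum_{k \geq 0} h_k(x,v)\,\omega^k$ (convergent in $C^\infty$), and the $S^1$-invariance $\V h = 0$ reduces to $V h_k = i k h_k$, forcing $h_k \in \Omega_k$. Restricting to $\omega = 1$ gives $u := \varrho^*(f|_{\partial^0 Z}) = \sum_k h_k$, which is fibrewise holomorphic, while the third equation evaluated at $\omega = 1$ yields $Xu = 0$. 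Injectivity is immediate: if $u = 0$, then each $h_k = 0$ by orthogonality of Fourier modes, hence $h = 0$ and $f = 0$.

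For surjectivity, given $u = \sum_{k \geq 0} u_k$ with $u_k \in \Omega_k$ and $Xu = 0$, the natural candidate is
\begin{equation*}
h(x,v,\omega) := \sum_{k \geq 0} u_k(x, v)\, \omega^k.
\end{equation*}
The Fourier decomposition of $Xu = 0$ gives $\eta_- u_0 = 0$, $\eta_- u_1 = 0$, and $\eta_+ u_{j-2} + \eta_- u_j = 0$ for every $j \geq 2$. These identities are exactly what one needs when regrouping $(\omega^2 \eta_+ + \eta_-) h$ by powers of $\omega$, so this third equation holds coefficient by coefficient. The remaining conditions $\partial_{\bar\omega} h = 0$ and $\V h = 0$ follow at once from the form of $h$ and from $V u_k = i k u_k$. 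Thus, once smoothness of $h$ on $SM \times \DD$ is established, it descends to a holomorphic function $f \in \O(Z)$ with $\varrho^*(f|_{\partial^0 Z}) = u$, and the two constructions are mutually inverse.

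The step requiring most care is smoothness of $h$ on the manifold-with-corners $SM \times \DD$, particularly at $\partial \DD$ where one can no longer exploit $|\omega| < 1$. Working in a local trivialisation with fibre angle $\theta$ and writing $u_k = a_k(x)\,e^{ik\theta}$ gives
\begin{equation*}
h(x,v,\omega) = \sum_k a_k(x)\,(e^{i\theta}\omega)^k,
\end{equation*}
exhibiting $h$ locally as a power series in the single complex variable $\zeta := e^{i\theta}\omega \in \overline{\DD}$ with $C^\infty(M)$-valued coefficients. Smoothness of $u$ ensures the rapid decay $\|a_k\|_{C^m(M)} = O(|k|^{-N})$ for all $m, N$, which is the standard Paley-Wiener type principle that a smooth function on $S^1$ with only non-negative Fourier modes extends to an element of $C^\infty(\overline{\DD})$ holomorphic in the interior. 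This decay makes the series and all its derivatives converge uniformly, yielding the required smoothness up to the boundary.
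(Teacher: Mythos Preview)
Your proof is correct and follows essentially the same strategy as the paper: lift to $SM \times \DD$, expand in powers of $\omega$, identify the coefficients with vertical Fourier modes via $\V h = 0$, and use rapid decay of Fourier coefficients to control smoothness up to the boundary.

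The one noteworthy difference is how you verify $(\omega^2\eta_+ + \eta_-)h = 0$ in the surjectivity direction. You match coefficients directly, using that the Fourier decomposition of $Xu=0$ reads $\eta_- u_0 = \eta_- u_1 = 0$ and $\eta_+ u_{j-2} + \eta_- u_j = 0$ for $j \geq 2$, which are precisely the coefficients of $(\omega^2\eta_+ + \eta_-)h$ in powers of $\omega$. The paper instead sets $g := (\omega^2\eta_+ + \eta_-)h$, observes that $g|_{\omega=1} = Xu = 0$, that $\partial_{\bar\omega} g = 0$, and that $(\V + i)g = 0$ (so $g$ vanishes on all of $\{|\omega|=1\}$), then invokes the maximum modulus principle on $\DD$ to conclude $g \equiv 0$. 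Your route is more elementary and transparent for this particular statement; the paper's maximum-principle device, however, is a reusable template that reappears in later arguments (notably Proposition~\ref{tcB}), where direct coefficient matching would be less convenient.
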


\begin{proof}
Given $f\in \O(Z)$, denote $h = p^* f\in C^\infty(SM\times \DD)$. For fixed $(x,v)\in SM$, the function $h(x,v,\cdot)$ is holomorphic in $\{\vert \omega \vert <1\}$ and thus expands as 
\begin{equation}\label{powerseries}
h(x,v,\omega) = \sum_{k\ge 0} \omega^k u_k(x,v),
\end{equation}
with coefficients $u_k(x,v)\in \C$. By Cauchy's integral formula and the $\V$-invariance,
\begin{equation}
u_k(x,v) = \frac1{2\pi i}\int_{\vert \zeta \vert =1} \frac{h(x,v,\zeta)}{\zeta^{k+1}} \d\zeta = \frac{1}{2\pi}\int_0^{2\pi} f(x,e^{i\theta}) e^{-ik\theta}\d\theta.
\end{equation}
This shows that  $u_k(x,v)$ depends smoothly on $(x,v)\in SM$ and moreover, that it is the $k$th Fourier mode of the fibrewise holomorphic function $u=f\vert_{SM}\in C^\infty(SM)$. Using the identity $X=\eta_++\eta_-$ and, again, holomorphicity of $f$ we see that
\begin{equation}
Xu = (\omega^2 \eta_+ + \eta_-) h \vert_{\omega =1} = 0,
\end{equation}
which shows that the map in \eqref{correspondenceA} is well defined. The map is clearly injective.

We construct an inverse map as follows: If $u\in \oplus_{k\ge 0} \Omega_k$ is a first integral, then its Fourier modes $u_k$ are easily seen to satisfy  $\Vert u_k \Vert_{C^m(SM)} = O(k^{-\infty})$ ($m\in \N_0$), such that  \eqref{powerseries} defines a function $h\in C^\infty(SM\times \DD)$. We compute that
\begin{equation} \label{powerseries2} \V h(x,v,\omega) = \sum_{k\ge 0} \omega^k (V u_k(x,v) - i k u_k(x,v))=0, 
\end{equation}
 which means that it descends to a function $f=p_*h \in \Omega^0(Z)$ with $f\vert_{SM} = u$. It remains to show that $f$ is holomorphic, or equivalently that
\begin{equation}
g:=(\omega^2 \eta_+ + \eta_-) h = 0 \quad \text{ and } \quad \partial_{\bar \omega}h =0.
\end{equation}
The latter equation is satisfied in view of the expansion \eqref{powerseries} and we know that $g(x,v,1)=0$ for all $(x,v)\in SM$, as $Xu=\eta_+ u + \eta_- u=0$. To see that $g$ indeed vanishes for {\it all} $\omega \in \DD$ note that
\begin{equation}
\partial_{\bar \omega}g=0 \text{ for } \vert \omega \vert \le 1\quad \text {and } \quad g = 0 \text{ for } \vert \omega \vert =1,
\end{equation} 
which follows in view of the previous observations from $[\omega^2\eta_++\eta_-,\partial_{\bar \omega}]=0$ and the $S^1$-invariance of $g$, respectively. Thus $g$ vanishes on all of $SM\times \DD$ by the maximum modulus principle  on $\DD$. This completes the proof.
\end{proof}

\begin{remark}\label{invariantforms}
By the same method of proof, we can associate to any  $u\in \oplus_{k\ge k_0}\Omega_k$ ($k_0\in \Z$) a function $h\in C^\infty(SM\times \DD)$  with $\partial_{\bar \omega}h=0$ by  setting
\begin{equation}\label{invariantforms2}
h(x,v,\omega)=\sum_{k\ge k_0} \omega^{k-k_0} u_k(x,v).
\end{equation}
This is easily checked to satisfy $(\V - ik_0)h=0$ such that for $k_0=0,-1,-2$ we can generate elements of $\Omega^0(Z)$, $\Omega^{0,1}(Z)$ and $\Omega^{0,2}(Z)$, respectively. Vice versa, if $h\in C^\infty(SM\times \DD)$ satisfies $\partial_{\bar \omega}h=0$ and  $(\V - ik_0)h=0$, then $u(x,v)=h(x,v,1)$ defines an element in $\oplus_{k\ge k_0} \Omega_k$.
\end{remark}

For the next result we consider the embedding $\iota_0\colon M\rightarrow Z$, $\iota_0(x)=(x,0)$ as zero section. If we equip $M$ with the complex structure induced by $g$ and the orientation, then $\iota_0$ becomes a holomorphic map.

\begin{lemma} The embedding $\iota_0\colon M\rightarrow Z$ as zero section is holomorphic.
\end{lemma}

\begin{proof} We have to show that for all $(x,v)\in SM$,
$$
(\iota_0)_*(T^{0,1}_{x}M) \subset \D_{(x,0)},\quad \text{ where }  T_x^{0,1}M = \spn_\C\{v+iv^\perp\}.
$$ 
To see this, pick a neighbourhood $U\subset M$ of $x$ and let $\tilde \iota_0\colon U\rightarrow SU\times \DD$ be a local lift of $\iota_0$ with $\tilde \iota_0(x)=(x,v,0)$. Then modulo $\spn_\C \V(x,v,0)$ we have
\begin{equation*}
(d\tilde \iota_0)_x(v + iv^\perp) \equiv X(x,v) - i X_\perp  \equiv \eta_-(x,v) \in 
\widetilde \D_{(x,0)},
\end{equation*}
which yields the desired inclusion after push-forward by $p$. 
\end{proof}

As a consequence there is a well-defined map
\begin{equation}
\iota_0^*:\O(Z)\rightarrow \O(M),
\end{equation}
where $\mathcal O(M)$ denotes the space of holomorphic functions on $M$ that are smooth up to the boundary. Under the identification $C^\infty(M)\cong \Omega_0$ this is also given as
\begin{equation}
\O(M)=\{g\in \Omega_0:\eta_-g=0\}.
\end{equation}
The following result is then a consequence of the characterisation in Proposition \ref{tcA} and a classical result of Pestov and Uhlmann on the surjectivity of the adjoint X-ray transform $I_0^*$.

\begin{corollary}[Cartan extension - transport version] \label{cartanex} Suppose $Z$ is the twistor space of a simple surface $(M,g)$. Then the map $\iota_0^*:\O(Z)\rightarrow \O(M)$ is onto. 
\end{corollary}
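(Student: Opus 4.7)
The plan is to use the twistor correspondence of Proposition \ref{tcA} to reduce the statement to a classical existence problem on $SM$, which can then be settled by Pestov--Uhlmann's surjectivity theorem for simple surfaces.

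First I would unpack how the bijection \eqref{correspondenceA} behaves under pullback along $\iota_0$. Given $f \in \O(Z)$ with lift $h = p^*f$ and expansion $h(x,v,\omega) = \sum_{k \geq 0}\omega^k u_k(x,v)$ (as in the proof of Proposition \ref{tcA}), evaluation at $\omega = 0$ yields
\begin{equation*}
\iota_0^* f(x) = h(x,v_x,0) = u_0(x,v_x),
\end{equation*}
which depends only on $x$ since $V u_0 = 0$, and is holomorphic on $M$ since $\eta_- u_0 = (Xu)_{-1} = 0$. Thus, under the bijection \eqref{correspondenceA}, the operator $\iota_0^*: \O(Z) \to \O(M)$ corresponds to the projection $u \mapsto u_0$ from fibrewise holomorphic first integrals onto their zeroth Fourier mode. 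Consequently, surjectivity of $\iota_0^*$ is equivalent to the existence statement: for every $g \in \O(M)$ there exists a fibrewise holomorphic first integral $u \in \oplus_{k \geq 0}\Omega_k$ with $Xu = 0$ and $u_0 = g$.

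Given this reduction, I would invoke the classical Pestov--Uhlmann surjectivity theorem on simple surfaces, which produces a smooth first integral $\tilde u \in C^\infty(SM)$ with $\tilde u_0 = g$. To upgrade $\tilde u$ to a \emph{fibrewise holomorphic} first integral, one applies the fibrewise Hilbert transform in the standard way: on simple surfaces, the commutator of $H$ with $X$ only affects the zeroth Fourier mode, so when $u_0 = g$ is already holomorphic the correction terms drop out and the fibrewise-holomorphic projection of $\tilde u$ is again a first integral with the same $0$-th mode.

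The main obstacle is the refinement from smooth first integrals to fibrewise holomorphic ones, which rests on the Pestov-identity-style commutator formulas for $H$ and $X$ characteristic of simple surfaces. An alternative, more self-contained route would be to apply Proposition \ref{xaonto} with $\A = 0$ to solve $Xw = -\eta_+ g \in \Omega_1$ for some $w \in \oplus_{k \geq 0}\Omega_k$ and form $u = g + w$; this yields a fibrewise holomorphic first integral with $u_0 = g + w_0$ for some $w_0 \in \O(M)$, and pinning down $w_0 = 0$ by modifying $w$ within the kernel of $X$ requires precisely the Pestov--Uhlmann input, showing that the two routes are essentially equivalent.
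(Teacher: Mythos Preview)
Your overall strategy matches the paper's: reduce via Proposition \ref{tcA} to producing a fibrewise holomorphic first integral with prescribed zeroth mode, then invoke Pestov--Uhlmann to get a smooth first integral $\tilde u$ with $\tilde u_0 = g$. However, your ``upgrade'' step contains a gap. The naive fibrewise holomorphic projection $u = P_{\ge 0}\tilde u = \sum_{k\ge 0}\tilde u_k$ is \emph{not} a first integral in general: computing Fourier modes of $Xu$, all modes vanish except
\[
(Xu)_0 = \eta_- \tilde u_1 = -\eta_+ \tilde u_{-1},
\]
which need not be zero. Your appeal to the $[H,X]$ commutator formula does not resolve this; the correction term involves $X_\perp \tilde u_0$ and $(X_\perp \tilde u)_0$, and holomorphicity of $g$ only kills the former.

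The paper's fix is a simple parity trick you are missing: since $X$ swaps even and odd vertical modes, the even part $\tilde u^{\mathrm{ev}} = \sum_{k\in 2\Z}\tilde u_k$ is again a first integral with zeroth mode $g$. One then takes $u = \tilde u_0 + \tilde u_2 + \tilde u_4 + \cdots$, i.e.\ the nonnegative \emph{even} modes only. Now the problematic term $(Xu)_0 = \eta_- \tilde u_1$ vanishes (as $\tilde u_1=0$ for an even function), and the remaining boundary term $(Xu)_{-1} = \eta_- g$ vanishes by holomorphicity of $g$. Your alternative route via Proposition \ref{xaonto} is, as you note yourself, incomplete for the same reason: it produces $u_0 = g + w_0$ with $w_0\in\O(M)$ uncontrolled.
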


\begin{proof} As explained above, any $g\in \O(M)$ may be viewed as element in $\Omega_0$ with $\eta_-g =0$. By \cite{PeUh05} (see also Theorem 8.2.2 in \cite{PSU20}) there exists a solution $w\in C^\infty(SM)$ to $Xw = 0$ with Fourier mode $w_0=g$. Then $u = w_0+w_2+\dots$ is smooth, fibrewise holomorphic and satisfies $X u=0$. By the preceding proposition (and equation \eqref{powerseries} in the proof) $u$ gives rise to an element $f\in \O(Z)$  with
\begin{equation}
p^*f(x,v,\omega)=w_0(x) + \omega^2 w_2(x,v) + \dots,
\end{equation}
in particular, $\iota_0^*f(x)=p^*f(x,v,0)=w_0(x)=g(x)$, as desired.
\end{proof}

The preceding result can be viewed as `Cartan extension theorem' and implies in particular that the twistor  space $Z$ of a simple surface admits an abundance of holomorphic functions. This is first evidence for $Z$ behaving like a Stein surface, as claimed in \textsection \ref{twistorintro}.
Further evidence is provided by Theorem \ref{tog} and its corollaries.


\subsection{Coordinates and Euclidean case} \label{coordinates} It is instructive to express the twis\-tor space $Z$ in terms of isothermal coordinates on $(M,g)$. Suppose that $(x_1,x_2)$ are coordinates on an open subset $O\subset M$, such that $g\vert_O = e^{2\lambda}\d x^2$, with $\lambda\in C^\infty(O,\R)$. Viewing $O$ as subset of $\C$ with complex coordinate $z=x_1+ix_2$, we define
\begin{equation}\label{coo0}
Z_O= O\times \DD,\quad \D_O=\spn \{\Xi , \partial_{\bar \mu} \}\subset T_\C Z_O,
\end{equation}
where $\mu$ is the coordinate of the $\DD$-factor and the vector field $\Xi$ is defined by 
\begin{equation}
\Xi= e^{-\lambda}\left[\mu^2\partial_{z} + \partial_{\bar z} + \left(\mu^2 \partial_{z} \lambda - \partial_{\bar z} \lambda \right)(\bar \mu \partial_{\bar \mu} - \mu \partial_{\mu})\right].
\end{equation}
On $SM\vert_O$ we have coordinates $(x_1,x_2,\theta)$, where $\theta\in \R/(2\pi \Z)$ is the (oriented) angle of a unit vector with $\partial_{x_1}$ and there is an isomorphism
\begin{equation}
\varrho_O: SM\vert_O \xrightarrow{\sim}  O \times S^1,\quad (x_1,x_2,\theta) \mapsto (x_1+ix_2,e^{i\theta}),
\end{equation}
which is made implicit below.

The next lemma shows that $(Z_O,\D_O)$ is a  (degenerately) complex surface -- its proof is independent from the analogous  Lemma \ref{cxstructure} and the two constructions are seen to be equivalent below. We use the following notation:
\begin{equation}\label{defb}
\Lambda  := e^{-\lambda} \left(\mu^2 \partial_{z} \lambda - \partial_{\bar z} \lambda \right) \in C^\infty(Z_O,\C)
\end{equation}

\begin{lemma}[Complex structure in coordinates]\label{lemcoo1} \,
\begin{enumerate}[label=(\roman*)]
\item\label{coo1} $[\Xi,\partial_{\bar \mu}]=-\Lambda\partial_{\bar \mu}$, hence $\D_O$ is involutive.
\item \label{coo2} $\D_O\cap \bar \D_O = 0$ on $Z_O\backslash (O\times S^1)$.
\item \label{coo3} On $O\times S^1\cong SM\vert_O$ we have $\Xi = \mu X$.
\end{enumerate}
\end{lemma}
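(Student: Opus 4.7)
The three assertions are all direct verifications in the coordinates just introduced, so the plan is simply to organise the relevant computations; no deep idea beyond carefully handling the isothermal coordinates is needed.

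For \ref{coo1}, I would observe that among the coefficients of $\Xi$, only the factor $\bar\mu$ inside the bracketed term depends on $\bar\mu$; everything else is a function of $z,\bar z$ and $\mu$ only. Applying $\partial_{\bar\mu}$ coefficient-wise therefore yields $\partial_{\bar\mu}\Xi = e^{-\lambda}(\mu^{2}\partial_{z}\lambda-\partial_{\bar z}\lambda)\,\partial_{\bar\mu}=\Lambda\,\partial_{\bar\mu}$, so $[\Xi,\partial_{\bar\mu}]=-\Lambda\,\partial_{\bar\mu}$. Since the right-hand side lies in $\spn\{\partial_{\bar\mu}\}\subset \D_O$, involutivity of $\D_O$ follows from the Frobenius criterion.

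For \ref{coo2}, I would project $T_\C Z_O$ onto its $(\partial_z,\partial_{\bar z})$-summand. The images of $\Xi$ and $\bar\Xi$ under this projection are $e^{-\lambda}(\mu^{2}\partial_{z}+\partial_{\bar z})$ and $e^{-\lambda}(\partial_{z}+\bar\mu^{2}\partial_{\bar z})$ respectively, whose determinant is $e^{-2\lambda}(1-|\mu|^{4})$ and hence nonzero on $\{|\mu|<1\}$. Since $\partial_\mu,\partial_{\bar\mu}$ project to zero but are linearly independent from one another, the four vectors $\{\Xi,\partial_{\bar\mu},\bar\Xi,\partial_\mu\}$ form a frame of $T_\C Z_O$ on $Z_O\setminus\partial^{0}Z_O$, which gives $\D_O\cap\bar\D_O=0$ there.

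For \ref{coo3} I would restrict to $|\mu|=1$ and substitute $\mu=e^{i\theta}$. Using $\partial_{z}=\tfrac{1}{2}(\partial_{x_{1}}-i\partial_{x_{2}})$, a short trigonometric manipulation gives
\begin{equation*}
\mu^{2}\partial_{z}+\partial_{\bar z}=\mu(\cos\theta\,\partial_{x_{1}}+\sin\theta\,\partial_{x_{2}}),\qquad \mu^{2}\partial_{z}\lambda-\partial_{\bar z}\lambda=i\mu(\sin\theta\,\partial_{x_{1}}\lambda-\cos\theta\,\partial_{x_{2}}\lambda).
\end{equation*}
A polar-coordinate computation on $\DD$ yields $\bar\mu\partial_{\bar\mu}-\mu\partial_{\mu}=i\partial_{\theta}$ on $|\mu|=1$. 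Multiplying the two displayed terms and assembling gives
\begin{equation*}
\Xi\big|_{|\mu|=1}=e^{-\lambda}\mu\bigl[\cos\theta\,\partial_{x_{1}}+\sin\theta\,\partial_{x_{2}}+(-\sin\theta\,\partial_{x_{1}}\lambda+\cos\theta\,\partial_{x_{2}}\lambda)\partial_{\theta}\bigr],
\end{equation*}
which matches $\mu X$ by the explicit formula \eqref{defX} for $X$ in isothermal coordinates.

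The only real obstacle is bookkeeping: one must keep track of the sign conventions in $\partial_{z},\partial_{\bar z}$ versus $\partial_{x_{j}}$, and verify that the orientation of the $S^{1}$-action on the $\mu$-factor is compatible with the orientation used to define $V$ on $SM$. Once these conventions are fixed (for instance by reading them off from $\varrho_O$), the three verifications are mechanical, and the coordinate description $(Z_{O},\D_{O})$ will be seen to be the restriction of the global $(Z,\D)$ from Lemma \ref{cxstructure}.
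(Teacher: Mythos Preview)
Your proof is correct and follows essentially the same route as the paper, which in fact omits the computations for \ref{coo1} and \ref{coo2} entirely and carries out \ref{coo3} by the same substitution $\mu=e^{i\theta}$ and the identity $\bar\mu\partial_{\bar\mu}-\mu\partial_{\mu}=i\partial_\theta$. Two very minor remarks: in \ref{coo2} the determinant you compute is actually $e^{-2\lambda}(|\mu|^{4}-1)$ rather than $e^{-2\lambda}(1-|\mu|^{4})$, which is immaterial for the argument; and your final sentence about $(Z_O,\D_O)$ being the restriction of the global $(Z,\D)$ is really the content of the subsequent Lemma~\ref{lemcoo2}, not of this one.
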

\begin{proof} Parts \ref{coo1} and \ref{coo2} follow from simple computations that we omit. For part \ref{coo3} note that
$
e^{i\theta}\partial_z = \frac12 \left(\cos \theta \partial_{x_1} + \sin \theta \partial_{x_2}\right) + \frac i2 \left(\sin \theta \partial_{x_1} - \cos \theta \partial_{x_2}\right),
$
hence the coordinate description of $X$ from \eqref{defX} is equivalent to
\begin{equation}\label{xcoo}
X= e^{-\lambda} \left(e^{i\theta} \partial_z + e^{-i\theta} \partial_{\bar z} + \left(e^{i\theta}\partial_z\lambda - e^{-i\theta} \partial_{\bar z}\lambda\right)\left(i\partial_{\theta}\right)\right).
\end{equation}
Under the isomorphism $\rho_O$ we have $\mu =e^{i\theta}$ such that $\bar \mu \partial_{\bar \mu} - \mu \partial_\mu = i \partial_{\theta}$ and hence $X= \mu^{-1} \Xi$ for $\vert \mu \vert =1$, as desired.
\end{proof}

Define a map $\kappa_O:Z_O\rightarrow Z$ by $\kappa_O(x,\mu)=p(x,0,\mu)$ (where $0$ stands for the angle $\theta=0$) and note that  $\kappa_O(Z_O)=\{(x,v)\in Z:x\in O\}$.


\begin{lemma}[Comparison with invariant twistor space] \label{lemcoo2} \,
\begin{enumerate}[label=(\roman*)]
	\item \label{coo4} The map $\kappa_O$ is a diffeomorphism onto its image and $(\kappa_O)_*(\D_O)=\D$.
\item \label{coo5} Let $U\subset \kappa_O(Z_O)$ be open, then  pullback by $\kappa_O$ induces isomorphisms that fit into the commutative diagram
\begin{equation}\label{diagram}
\begin{tikzcd}
\Omega^0(U) \arrow{d}{\sim} \arrow{r}{\bar \partial} & \Omega^{0,1}(U) \arrow{d}{\sim} \arrow{r}{\bar \partial} & \Omega^{0,2}(U)\arrow{d}{\sim}\\
C^\infty(\kappa_O^{-1}(U)) \arrow{r}{\bar \partial}& C^\infty(\kappa_O^{-1}(U))^2 \arrow{r}{\bar \partial} & C^\infty(\kappa_O^{-1}(U)),
\end{tikzcd}
\end{equation}
where the $\bar \partial$-operators on the bottom are given (with $\Lambda$ as in \eqref{defb}) by
\begin{equation}
\bar \partial f = (\Xi f, \partial_{\bar \mu} f)\quad \text{ and } \quad \bar \partial (f_1,f_2) = (\Xi + \Lambda) f_2 - \partial_{\bar \mu} f_1.
\end{equation}
\end{enumerate} 
\end{lemma}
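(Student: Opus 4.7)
The plan is to reduce everything to an explicit coordinate computation that exploits the $S^1$-action. Over $O$, identify $SM|_O\times \DD \cong O\times S^1\times \DD$ with coordinates $(z,\bar z,\theta,\omega)$ and introduce the $S^1$-invariant variable $\mu = e^{i\theta}\omega$; in the resulting coordinate system $(z,\bar z,\theta,\mu,\bar\mu)$ the vector field $\V = V + i(\bar\omega\partial_{\bar\omega} - \omega\partial_\omega)$ becomes simply $\partial_\theta$. This identifies the $S^1$-quotient $\kappa_O(Z_O)$ with $O\times\DD$ via the section $\{\theta = 0\}$ of $p$, realising $\kappa_O$ as a diffeomorphism onto its image with inverse $[(x,v,\omega)]\mapsto (x,e^{i\theta(v)}\omega)$, where $\theta(v)$ is the angle of $v$ in the isothermal frame.

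For the equality $(\kappa_O)_*\D_O = \D$ in \ref{coo4}, I would combine the formula \eqref{defX} with the analogous expression for $X_\perp$ to deduce $\eta_\pm = e^{-\lambda}e^{\pm i\theta}(\partial_{z/\bar z} \pm i\partial_{z/\bar z}\lambda\cdot\partial_\theta)$, and then use the change-of-variable rule $\partial_\theta|_{\mathrm{old}} = \V + i(\mu\partial_\mu - \bar\mu\partial_{\bar\mu})$. A direct calculation then produces the two key identities
\[
\omega^2\eta_+ + \eta_- = e^{-i\theta}\bigl(\Xi + i\Lambda\V\bigr), \qquad \partial_{\bar\omega} = e^{-i\theta}\partial_{\bar\mu},
\]
with $\Lambda$ as in \eqref{defb}. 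Since the $S^1$-quotient kills multiples of $\V$ and $e^{-i\theta}$ is a nowhere-vanishing scalar factor, the complex line bundle generated upstairs by $\{\omega^2\eta_++\eta_-,\partial_{\bar\omega}\}$ descends exactly to the one generated by $\{\Xi,\partial_{\bar\mu}\}$, as required.

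For \ref{coo5}, observe that the defining condition $\V h + iqh = 0$ becomes $\partial_\theta h = -iqh$ in the new coordinates, so each element of $\Omega^{0,q}(U)$ has the form $h = e^{-iq\theta}\tilde h$ with $\tilde h\in C^\infty(\kappa_O^{-1}(U))$; this supplies the vertical isomorphisms in the diagram, given by $h\mapsto \tilde h$ (equivalently, by pullback along $\kappa_O$). Commutativity is then a direct check: for $h\in\Omega^0(U)$ the $\V$-term drops out and one reads off $(\omega^2\eta_++\eta_-)h = e^{-i\theta}\Xi\tilde h$ and $\partial_{\bar\omega}h = e^{-i\theta}\partial_{\bar\mu}\tilde h$, recovering $\bar\partial\tilde h = (\Xi\tilde h, \partial_{\bar\mu}\tilde h)$ downstairs. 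For $(h_1,h_2)\in\Omega^{0,1}(U)$, writing $h_j = e^{-i\theta}\tilde h_j$ and using $\V e^{-i\theta} = -ie^{-i\theta}$ produces the crucial extra contribution $i\Lambda\cdot(-ie^{-i\theta})\tilde h_2 = \Lambda e^{-i\theta}\tilde h_2$, so that
\[
(\omega^2\eta_++\eta_-)h_2 - \partial_{\bar\omega}h_1 = e^{-2i\theta}\bigl[(\Xi+\Lambda)\tilde h_2 - \partial_{\bar\mu}\tilde h_1\bigr],
\]
exactly matching the stated formula. The principal bookkeeping obstacle is the non-trivial transformation of $\partial_\theta$ under the change of coordinates, which is precisely what forces the $(\bar\mu\partial_{\bar\mu}-\mu\partial_\mu)$-piece of $\Xi$ and generates the $\Lambda$-correction in the second $\bar\partial$; once this is unwound every other step is mechanical.
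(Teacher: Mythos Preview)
Your proposal is correct and follows essentially the same route as the paper. Both arguments pass to the $S^1$-invariant variable $\mu=e^{i\theta}\omega$ (the paper does this by defining the map $q(x,\theta,\omega)=(x,e^{i\theta}\omega)$ and pushing forward, you do it as a change of coordinates), and both verify commutativity of \eqref{diagram} by evaluating at/restricting to $\theta=0$. The only noteworthy difference is packaging: your single identity $\omega^2\eta_++\eta_-=e^{-i\theta}(\Xi+i\Lambda\V)$ cleanly separates the $\Xi$-part from the $\V$-part once and for all, whereas the paper re-derives the relevant piece of the pushforward separately in the $\Omega^0$ and $\Omega^{0,1}$ cases (equations \eqref{coo6} and its $\Omega^{0,1}$ analogue), which makes the origin of the $\Lambda$-correction slightly less transparent.
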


\begin{proof} For \ref{coo4} define $q:SM\vert_O\times \DD\rightarrow Z_O$ by  $q(x,\theta,\omega)=(x,e^{i\theta}\omega)$; then $q$ is smooth, $S^1$-invariant and satisfies $\kappa_O(q(x,\theta,
\omega))=[(x,0,e^{i\theta}\omega)]=p(x,\theta,\omega)$. In particular, $q$ descends to an inverse of $\kappa_O$, which is consequently a diffeomorphism. We claim that $q_*(\widetilde \D) \subset \D_O$ -- this will complete the proof of part \ref{coo4} by comparing ranks. Indeed, one derives, similarly to \eqref{xcoo}, the  coordinate expression
\begin{equation}\label{etacoo}
(\omega^2\eta_+ + \eta_-) = e^{-i\theta}e^{-\lambda}\left(\omega^2 e^{2i\theta} \partial_{z} + \partial_{\bar z} + (\omega^2 e^{2i\theta} \partial_z \lambda - \partial_{\bar z} \lambda) (i\partial_\theta)\right)
\end{equation}
and employs this to compute the push forwards
\begin{eqnarray*}
\d q_{(x,\theta,\omega)}(\partial_\theta) &=& (\partial_t\vert_{t=0}) q(x,\theta+t,\omega) = (\partial_t\vert_{t=0}) q(x,\theta,e^{it} \omega)\\
&  =& i e^{i\theta}\omega \partial_{\mu} - i e^{-i\theta}\bar \omega \partial_{\bar \mu} = -i(\bar \mu \partial_{\bar\mu} - \mu\partial_\mu)\\
\d q_{(x,\theta,\omega)}(\omega^2 \eta_++\eta_-) &=& e^{-i\theta} \Xi(q(x,\theta,\omega))\\
\d q_{(x,\theta,\omega)}(\partial_{\bar \omega})&=& e^{-i\theta} \partial_{\bar \mu},
\end{eqnarray*}
from which the claim follows.

For part \ref{coo5} we first note that the vertical arrows in \eqref{diagram} are defined as `pull-backs' by $\kappa_O$, understood as follows: Given a function $h\in C^\infty(p^{-1}(U))$ (representing an element in $\Omega^0(U), \Omega^{0,1}(U)$ or $\Omega^{0,2}(U)$), we write
$\kappa_O^* h(x,\mu) = h(x,0,\mu)$. Now consider $h\in \Omega^0(U)$, then as $\V h = 0$,
\begin{equation}\label{coo6}
\partial_\theta h(x,0,\mu) = Vh(x,0,\mu)= -i(\bar \omega \partial_{\bar \omega} - \omega \partial_{\omega})h(x,0,\mu)
\end{equation}
and together with  \eqref{etacoo} we obtain
\begin{equation}
\kappa_O^*\left((\omega^2\eta_++\eta_-) h\right)(x,\mu) = \Xi(\kappa_O^*h)(x,\mu).
\end{equation}
Similarly, $\kappa_O^*(\partial_{\bar \omega})h(x,\mu)=\partial_{\bar \mu}(\kappa_O^*h)(x,\mu)$ and thus the left square in \eqref{diagram} commutes. Next, if $(h_1,h_2)\in \Omega^{0,1}(U)$, then $\V h_j = -ih_j$ ($j=1,2$) and similarly to \eqref{coo6} we have
\begin{equation}
\partial_\theta h_2(x,0,\mu) = Vh_2(x,0,\mu)= -i(\bar \omega \partial_{\bar \omega} - \omega \partial_{\omega})h_2(x,0,\mu) - i h_2(x,0,\mu),
\end{equation}
such that
$
\kappa_O^*\left((\omega^2 \eta_+ + \eta_-)h_2\right)(x,\mu) = (\Xi+ \Lambda)(\kappa_O^* h_2)(x,\mu)
$.  The computation for $\partial_{\bar \omega}$ remains unchanged and thus also the right square in \eqref{diagram} commutes.
\end{proof}

We can gain more insight into the (degenerate) complex surface $Z$ in the case that  $(M,g)$ is a Euclidean domain. First suppose that $M=\R^2$, such that $Z=\C\times \DD$, with Cauchy-Riemann equations given in terms of
\begin{equation}
\Xi = \mu^2 \partial_{z} + \partial_{\bar z}\quad \text{ and } \quad \partial_{\bar \mu}.
\end{equation} 
Let $W=\C\times \DD$ be equipped with the standard complex structure, given in terms of $\partial_{\bar w}$ and $\partial_{\bar \mu}$ for coordinates $(w,\mu)\in \C\times \DD$. Then the map
\begin{equation}\label{beta1}
\beta: Z\rightarrow W,\quad (z,\mu) \mapsto (z-\mu^2\bar z,\mu)
\end{equation}
is holomorphic (in the sense that $\beta_*(\D)\subset \spn \{\partial_{\bar w},\partial_{\bar \mu}\}$) and maps the interior of $Z$ diffeomorphically onto the interior of $W$, with inverse given by
\begin{equation}
\beta^{-1}(w,\mu)=\left(\frac{w}{1+\vert \mu \vert^2} + \frac{2\mu\Re(\bar \mu w)}{1-\vert \mu \vert^4} ,\mu\right),\quad (w,\mu)\in W^{\interior}.
\end{equation}
Thus $Z^{\interior}$ is biholomorphically equivalent to a polydisk in $\C^2$ and the degeneracy of the complex structure is encoded in the `blow down' map $\beta$. More generally:

\begin{lemma}\label{flatstein} Suppose $M\subset \R^2$ is a Euclidean domain. Then the interior of its twistor space  $Z$ is a Stein surface that is biholomorphic to a domain in $\C^2$.
\end{lemma}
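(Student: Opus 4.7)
The plan is to use the map $\beta$ from \eqref{beta1} to realise $Z_M^\interior$ as a pseudoconvex subdomain of the Stein surface $\C\times \DD^\interior$; by the Docquier-Grauert theorem (pseudoconvex subdomains of Stein manifolds are Stein), this will give the conclusion.

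For any open $M\subset\R^2$, the formula $\beta(z,\mu)=(z-\mu^2\bar z,\mu)$ continues to define a smooth map $Z_M\to \C\times \DD$. Using the coordinate description of Lemma \ref{lemcoo2} with $\lambda\equiv 0$ (so $\Lambda\equiv 0$), one checks directly that $\Xi(z-\mu^2\bar z)=\mu^2-\mu^2=0$ and $\partial_{\bar\mu}(z-\mu^2\bar z)=0$; thus $\beta$ is holomorphic. The inverse formula from the $M=\R^2$ case shows that $\beta$ restricts to a diffeomorphism of $Z_M^\interior=M\times\DD^\interior$ onto the open set
\[
\Omega:=\beta(Z_M^\interior)=\bigl\{(w,\mu)\in \C\times \DD^\interior:T_\mu(w)\in M\bigr\},
\]
where $T_\mu(w)=(w+\mu^2\bar w)/(1-|\mu|^4)$, so that $Z_M^\interior$ is biholomorphic to $\Omega$.

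To show $\Omega$ is Stein, I would first assume $\partial M$ is smooth and verify pseudoconvexity via the following geometric observation. The non-trivial part of $\partial \Omega$ inside $\C\times \DD^\interior$ is the smooth real hypersurface $S_M=\{(w,\mu):T_\mu(w)\in \partial M\}$, and I claim that $S_M$ is Levi-flat. Indeed, given $z_0\in \partial M$, the set
\[
\Gamma_{z_0}=\bigl\{(z_0-\mu^2\bar z_0,\mu):\mu\in \DD^\interior\bigr\}
\]
is the graph of the holomorphic function $\mu\mapsto z_0-\mu^2\bar z_0$, hence a complex curve contained in $S_M$; as $z_0$ varies over $\partial M$ these graphs foliate $S_M$. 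Consequently the complex tangent space to $S_M$ coincides with the tangent space to the passing leaf, on which the Levi form of any defining function of $\Omega$ vanishes identically. This yields (weak) pseudoconvexity of $\Omega$ at every smooth boundary point, while pseudoconvexity at the ``trivial'' boundary $\{|\mu|=1\}$ is inherited from the ambient Stein surface $\C\times \DD^\interior$.

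Finally, for a general open $M\subset\R^2$ with possibly non-smooth boundary, I would exhaust $M$ by an increasing sequence $M_1\Subset M_2\Subset\cdots$ of relatively compact, smoothly bounded subdomains with $\bigcup_k M_k=M$. Each $\Omega_k=\beta(Z_{M_k}^\interior)$ is pseudoconvex by the smooth case; since pseudoconvexity is stable under increasing unions of subdomains in a Stein manifold (the function $-\log\dist(\cdot,\partial \Omega)$ is a decreasing limit of plurisubharmonic functions), $\Omega=\bigcup_k\Omega_k$ is pseudoconvex, and hence Stein. The main difficulty is verifying the Levi-flatness of $S_M$, but this reduces cleanly to the explicit holomorphic parametrisation of the leaves $\Gamma_{z_0}$; the remaining complex-analytic steps are standard.
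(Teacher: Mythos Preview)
Your argument is correct and arrives at the same conclusion by a genuinely different route than the paper. Both proofs begin by using $\beta$ to realise $Z^\interior$ as a domain $\Omega\subset\C\times\DD^\interior\subset\C^2$, but they diverge on how to certify Stein-ness. The paper verifies \emph{holomorphic convexity} directly: for each boundary point $p=(z_*,\mu_*)$ it writes down an explicit function in $\O(Z^\interior)$ that blows up at $p$, namely $(\mu-\mu_*)^{-1}$ when $|\mu_*|=1$ and $\bigl((z-z_*)-\mu^2(\bar z-\bar z_*)\bigr)^{-1}$ when $z_*\notin M^\interior$. You instead observe that the nontrivial boundary hypersurface $S_M$ is foliated by the complex graphs $\Gamma_{z_0}=\{(z_0-\mu^2\bar z_0,\mu)\}$ and is therefore Levi-flat, whence $\Omega$ is pseudoconvex in the Stein manifold $\C\times\DD^\interior$ and Docquier--Grauert applies. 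The two arguments are in fact powered by the same object: the denominators of the paper's peak functions cut out precisely your leaves $\Gamma_{z_*}$. The paper's approach is more elementary and self-contained (no appeal to Docquier--Grauert), while yours is more geometric and makes the Levi-flat structure of $\partial\Omega$ transparent.

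Two small remarks. First, in the paper's setting $M$ is a compact surface with \emph{smooth} boundary, so your exhaustion by smooth subdomains is not needed (though it is correct and your justification via the decreasing limit $-\log\delta_{\Omega_k}\downarrow-\log\delta_\Omega$ is the right one). Second, your comment about the ``trivial boundary $\{|\mu|=1\}$'' is superfluous once you work inside the ambient Stein manifold $\C\times\DD^\interior$: there the only boundary of $\Omega$ is $S_M$, and Levi-flatness of $S_M$ already gives pseudoconvexity of $\Omega$ in that ambient, which is all Docquier--Grauert requires.
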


\begin{proof} The restriction of $\beta$ from \eqref{beta1} to  $Z^\interior=M\times \DD^{\interior}$ 
gives the desired embedding as domain in $\C^2$. In particular, the global holomorphic functions $\beta_1,\beta_2\in \mathcal O(Z^\interior)$  give a global coordinate system and separate points. To show that $Z^\interior$ is a Stein surface, 
 it thus remains to establish holomorphic convexity. To this end, define for $p=(z_*,\mu_*)\in \R^2\times \DD\backslash Z^\interior$ a function $f_p\in \O(Z^\interior)$ by
\begin{equation}
f_p(z,\mu)=
\begin{cases}
(\mu-\mu_*)^{-1}, & \vert \mu_*\vert =1,\\
\left ( (z-z_*)-\mu^2(\bar z -\bar {z_*})\right)^{-1}, & \vert \mu_* \vert<1, z_*\in \R^2\backslash M.
\end{cases}
\end{equation}
Let $K\subset Z^\interior$ be compact and consider the holomorphic hull $\hat K =\{(z,\mu)\in Z^{\interior}: \vert f(z,\mu)\vert \le \sup_K \vert f \vert \text{ for all } f\in \O(Z^{\interior})\}$. If $\hat K$ was {\it not} compact, it would contain a sequence $(z_n,\mu_n)$ with limit point $p$ as above, which leads to a contradiction, as $f_p$ is unbounded along that sequence. Thus $\hat K$ is compact and, as $K$ was arbitrary, the complex surface $Z^\interior$ is holomorphically convex.
\end{proof}

In fact, also the twistor space of a simple surface admits a natural, albeit less tractable,  holomorphic map $\beta:Z\rightarrow \C^2$ as follows: Passing to global isothermal coordinates and with $\Lambda$ as in \eqref{defb}, we may find a solution $u\in C^\infty(Z,\C)$ to \begin{equation}
\Xi u = \Lambda \quad \text{ and } \quad \partial_{\bar \mu}u =0 \quad \text{ on } Z.
\end{equation}
This follows from the existence of {\it scalar} holomorphic integrating factors on simple surfaces and is also a consequence of the vanishing result $H^1(Z,\mathcal{O})\equiv H^1_{\bar \partial}(Z,[0])=0$ from Corollary \ref{corco} below. Further, by Corollary \ref{cartanex} there exists a function $\beta_1\in \O(Z)$ with $\beta_1(z,0)=z$ for all $z\in M$ and one checks that
\begin{equation}
\beta(z,\mu) = \left(\beta_1(z,\mu),e^{u(z,\mu)}\mu \right) \in \C^2
\end{equation}
indeed defines a holomorphic map of similar form as \eqref{beta1} in the Euclidean case. While it would be interesting to know more about the behaviour of $\beta$  (e.g., is it also diffeomorphism on the interior of $Z$?), our approach to the transport Oka-Grauert principle does not require any such blow-down.

\subsection{Transport Oka-Grauert principle} We now define a `moduli space  of holomorphic vector bundles' over an open set $U\subset Z$. Noting that there are natural $\C^n$ and $\C^{n\times n}$-valued versions of the $\bar \partial$-complex \eqref{dbar}, we consider {\it partial connections} $A^{0,1}\in \Omega^{0,1}(U,\C^{n\times n})$ and maps
\begin{equation}\label{twisteddbar}
\Omega^0(U,\C^n) \xrightarrow{\bar \partial + A^{0,1}} \Omega^{0,1}(U,\C^n) \xrightarrow{\bar \partial + A^{0,1}} \Omega^{0,2}(Z,\C^n),
\end{equation}
defined in the obvious way. If  $A^{0,1}=(a_1,a_2)\in C^\infty(p^{-1}(U),\C^n)^2$ with $(\V + i)a_j=0$ $(j=1,2)$, a computation shows that the {\it curvature} of \eqref{twisteddbar} equals
\begin{equation}\label{curvature}
(\bar \partial + A^{0,1})^2 = (\omega^2 \eta_++\eta_-) a_2 - \partial_{\bar \omega} a_1 + [a_1,a_2] \in \Omega^{0,2}(U,\C^{n\times n}).
\end{equation}


\begin{definition} \label{modulispace} For $U\subset Z$ open we define the {\it moduli space}
\begin{equation}
\M(U)=\M_n(U)= \{A^{0,1}\in \Omega^{0,1}(U,\C^{n\times n}): (\bar\partial+A^{0,1})^2=0 \}/\sim,
\end{equation}
where $A^{0,1}\sim B^{0,1}$  if and only if there exists $\varphi\in C^\infty(U,GL(n,\C))$ with
\begin{equation}
 B^{0,1}=\varphi^{-1} \bar \partial \varphi + \varphi^{-1} A^{0,1} \varphi.
\end{equation}
\end{definition}

If $U\cap \partial Z=\emptyset$, such that $U$ is a classical complex surface, then
\begin{equation}\label{dokro}
\M_n(U) \cong \left\{\begin{array}{l}\text{(topologically trivial) holomorphic vector bundles}\\
\text{of rank $n$ over $U$ up to isomorphism}
\end{array}\right. . 
\end{equation}
Indeed, a representative $A^{0,1}$ of a class in $\M_n(U)$ equips $U\times \C^n$ with the structure of a holomorphic vector bundle by declaring a local section $f:V\rightarrow \C^n$ (for $V\subset U$ open) to be holomorphic, if $(\bar \partial + A^{0,1})f=0$; equivalent representatives yield isomorphic vector bundles (cf.\,Chapter 2.1.5 in \cite{DoKr90}).

\smallskip

Recall from \eqref{defmho} that $\mho$ consists of attenuations $\A\in C^\infty(SM,\C^{n\times n})$ with Fourier coefficients $\A_k=0$ for $k< -1$. The group $\G$ from \eqref{defG} acts on $\mho$ by \eqref{rule} and we now establish a correspondence between the orbits of $\G$ and elements in $\M\equiv \M_n(Z)$. Define a map $\mho \rightarrow 
\M$ as follows: For $\A\in \mho$ let
\begin{equation}\label{littlea}
A^{0,1}(x,v,\omega) := (a,0)\equiv \left( \sum_{k\ge -1} \omega^{k+1} \A_k(x,v), 0 \right) \in \Omega^{0,1}(Z,\C^{n\times n}),
\end{equation}
noting that $A^{0,1}$ lies in $\Omega^{0,1}(Z,\C^{n\times n})$ and satisfies $(\bar \partial + A^{0,1})^2 = 0$ in view of \eqref{curvature} and Remark \ref{invariantforms}. We then map $\A$ to the equivalence class $[A^{0,1}]\in \M$. 

\begin{proposition}[Twistor correspondence B]\label{tcB} The map $\mho \rightarrow \M$, $\A\mapsto [A^{0,1}]$ is $\G$-invariant and descends to an injective map $\mho /\G \rightarrow \M$. If $(M,g)$ is diffeomorphic to a disk, the induced map is also surjective, such that \begin{equation} \label{moduliiso}
\mho/\G\cong \M.
\end{equation}
\end{proposition}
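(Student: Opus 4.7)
The plan is to transfer the problem to the $SM\times\DD$ picture and exploit the correspondence between fibrewise holomorphic objects on $SM$ and $\omega$-holomorphic objects on $Z$. The key preliminary dictionary is that any $F\in\G$ lifts to a $\V$-invariant, $\omega$-holomorphic function $\varphi\in C^\infty(Z,GL(n,\C))$ via $\varphi(x,v,\omega)=\sum_{k\ge 0}\omega^k F_k(x,v)$; invertibility of $\varphi$ pointwise follows because multiplying $\varphi$ with the analogous lift of $F^{-1}$ yields a power series in $\omega$ whose Fourier-mode coefficients, extracted from $FF^{-1}=\Id$, are exactly those of $\Id$. Conversely, any $\omega$-holomorphic, $\V$-invariant $\varphi\in C^\infty(Z,GL(n,\C))$ restricts at $\omega=1$ to an element of $\G$, because the $\omega$-Taylor coefficients of the lift sit in $\Omega_k$ by the equation $\V\varphi=0$.

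With this dictionary in place, $\G$-invariance becomes a direct computation: for $\A\in\mho$ and $F\in\G$ with lift $\varphi$, the expression
\begin{equation*}
\varphi^{-1}\bar\partial\varphi+\varphi^{-1}A^{0,1}_\A\varphi=\bigl(\varphi^{-1}(\omega^2\eta_++\eta_-)\varphi+\varphi^{-1}a_\A\varphi,\,0\bigr)
\end{equation*}
lies in $\Omega^{0,1}(Z,\C^{n\times n})$ and is $\omega$-holomorphic, and at $\omega=1$ its first component reduces to $F^{-1}XF+F^{-1}\A F=\A\act F$. Matching Fourier modes then forces it to coincide with $A^{0,1}_{\A\act F}$ throughout $SM\times\DD$. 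Injectivity is the converse: given $\varphi\in C^\infty(Z,GL(n,\C))$ realising $A^{0,1}_\A\sim A^{0,1}_\B$, the second-component equation forces $\partial_{\bar\omega}\varphi=0$, and the Fourier expansion of the lift (using $\V\varphi=0$) produces $F=\varphi\vert_{\omega=1}\in\G$ satisfying $\B=\A\act F$.

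The heart of the proof is surjectivity under $M\cong\DD$. Given a representative $A^{0,1}=(a_1,a_2)$, I would seek a gauge killing the second coordinate by solving
\begin{equation*}
\partial_{\bar\omega}\varphi+a_2\varphi=0,\qquad \V\varphi=0,\qquad \varphi\in C^\infty(SM\times\DD,GL(n,\C)).
\end{equation*}
Since $M\cong\DD$ admits global isothermal coordinates, Lemma \ref{lemcoo2} identifies $Z$ with $\DD\times\DD$ and turns the equation into the parametrised matrix $\bar\partial$-problem $\partial_{\bar\mu}\varphi+\beta\varphi=0$ on the polydisk. For each fixed $z$ this is solvable by Oka--Grauert on the disk (Lemma \ref{ogdisk}), and normalising $\varphi(z,0)=\Id$ together with a Cauchy-transform construction should produce a solution jointly smooth in $(z,\mu)$ up to the boundary. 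After this gauge I would be left with $B^{0,1}=(b,0)$; integrability $(\bar\partial+B^{0,1})^2=0$ collapses to $\partial_{\bar\omega}b=0$, and combining this with $(\V+i)b=0$ forces the power-series expansion $b(x,v,\omega)=\sum_{k\ge -1}\omega^{k+1}\A_k(x,v)$ for unique $\A_k\in\Omega_k$, so $\A=\sum_k\A_k\in\mho$ maps to $[A^{0,1}]$.

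I expect the parametrised Oka--Grauert step to be the main obstacle: while fibrewise solvability of $\partial_{\bar\mu}\varphi=-\beta\varphi$ is classical, obtaining a solution in $C^\infty(\DD^2,GL(n,\C))$ up to the boundary with smooth dependence on $z$ requires careful control of the Cauchy transform and its iterates. By contrast, the compatibility of the two constraints $\partial_{\bar\omega}\varphi=-a_2\varphi$ and $\V\varphi=0$ is automatic from $(\V+i)a_2=0$ combined with $[\V,\partial_{\bar\omega}]=-i\partial_{\bar\omega}$, so no extra equivariance difficulty arises.
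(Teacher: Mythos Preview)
Your proposal is correct and follows essentially the same approach as the paper: lift elements of $\G$ to $\omega$-holomorphic $\V$-invariant functions, verify $\G$-invariance and injectivity via this dictionary (the paper uses the maximum modulus principle on $\DD$ where you say ``matching Fourier modes'', and the argument principle for invertibility of $\varphi$ where you instead multiply by the lift of $F^{-1}$---both work), and for surjectivity pass to global isothermal coordinates on $Z\cong\DD\times\DD$ and gauge away the second component. Your worry about the parametrised Oka--Grauert step is unfounded: Lemma~\ref{ogdisk} already delivers a solution in $C^\infty(\mathcal{P}\times\DD,GL(n,\C))$ with smooth dependence on the base parameter and smooth up to the boundary, so no additional Cauchy-transform analysis is needed.
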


In fact, the isomorphism \eqref{moduliiso} holds true for any orientable surface $(M,g)$. For a proof of this more general statement we refer to Remark 4.4.13 in \cite{Bohr-thesis}.


\begin{proof} Suppose that $\A,\B\in \mho$, let $A^{0,1}=(a,0)$ as in \eqref{littlea} and define $B^{0,1}=(b,0)$ analogously. To demonstrate $\G$-invariance, we assume that $\A \act F=\B$ for some $F\in \G$ and consider the function
\begin{equation} \label{tcB3}
\phi(x,v,\omega) = \sum_{k\ge 0} \omega^k F_k(x,v),
\end{equation}
which is smooth and $S^1$-invariant by Remark \ref{invariantforms}; further it satisfies $\det \phi(x,v,\omega) \neq 0$ for all $\omega\in \DD$. To see this,  we define $\psi$ as in \eqref{tcB3}, but with $F$ replaced by $F^{-1}$. Then $h=\phi \psi - \Id$ satisfies $h(x,v,1)=0$ by construction, and, due to $S^1$-invariance,
\begin{equation}
h = 0 \text{ on } \{\vert \omega \vert = 1\} \quad \text{ and } \quad \partial_{\bar \omega} h = 0 \text { on } \{\vert \omega \vert\le 1\}.
\end{equation} 
By the maximum principle, $h\equiv 0$, which means that $\psi$ is an inverse for $\phi$.

 We claim that $A^{0,1}$ is equivalent to $B^{0,1}$ via the gauge $\varphi=p_*\phi \in C^\infty(Z,GL(n,\C))$, which is is to say that
\begin{equation}\label{tcB4}
\varphi^{-1} (\bar \partial + A^{0,1})\varphi \equiv \left(\phi^{-1}(\omega^2 \eta_+ + \eta_- + a)\phi, \phi^{-1}\partial_{\bar \omega}\phi\right)=(b,0)\in \Omega^{0,1}(Z,\C^{n\times n}).
\end{equation}
Evidently $\partial_{\bar \omega} \phi = 0$, so it remains to show that the function $g =  \phi^{-1}(\omega^2 \eta_+ + \eta_- + a)\phi - b$ vanishes identically. To see this, note that
$g(x,v,1)=\A\act F(x,v)-\B(x,v)=0$. Moreover $(\V + i)g=0$ which means that $g$ only changes phase along the flow of $\V$ and thus
\begin{equation}
g = 0 \text{ on } \{\vert \omega \vert  = 1\} \quad \text{ and } \quad \partial_{\bar \omega}g = 0 \text{ on } \{\vert \omega \vert < 1\},
\end{equation}
where holomorphicity in $\omega$ is easily checked. In particular the maximum modulus principle on $\DD$ applies to yield $g\equiv 0$.

To show that the induced map $\mho/\G\rightarrow \M$ is injective, we assume that $[A^{0,1}]=[B^{0,1}]$. This means that \eqref{tcB4} holds true, where now $\phi$ is defined as $p^*\varphi$ for an appropriate gauge $\varphi\in C^\infty(Z,GL(n,\C))$. In particular $\phi$ is holomorphic in $\omega$ and thus admits a series expansion as in \eqref{tcB3} with coefficients $F_k(x,v)\in \C^{n\times n}$ ($(x,v)\in SM$). Similar to the proof of Proposition \ref{tcA} one checks that $F_k\in \Omega_k$ such that 
\begin{equation}
F(x,v) = \phi(x,v,1)
\end{equation}
defines a smooth, $GL(n,\C)$-valued map on $SM$ with both $F$ and $F^{-1}$ being fibrewise holomorphic. Further, evaluating \eqref{tcB4} at $\omega=1$ yields $\A\act F=\B$, as desired.

To establish surjectivity of $\mho/\G\rightarrow \M$ we need to show that each class in $\M$ admits a representative $A^{0,1}=(a_1,a_2)\in \Omega^{0,1}(Z,\C^{n\times n})$ with $a_2\equiv 0$. Indeed, in that case $a_1\in C^\infty(SM\times \DD,\C^{n\times n})$ satisfies $\partial_{\bar \omega}a_1=0$ by the curvature condition, hence
\begin{equation}
a_1(x,v,\omega) = \sum_{k\ge -1} \omega^{k+1} \A_k(x,v)
\end{equation}
for coefficients $\A_k(x,v)$, which can be seen to lie in $\Omega_k$ as in the proof of Proposition \ref{tcA}; in particular  $\A(x,v)=a_1(x,v,1)$ is a preimage of $[A^{0,1}]$. 

We now make use of the fact that $M$ is diffeomorphic to the disk $\DD$, such that global isothermal coordinates become available. Using the description from \textsection \ref{coordinates}, the twistor space is then given by $Z=\DD\times \DD$ and a  representative of a class in $\M$ is a tuple $(b_1,b_2)\in C^\infty(Z,\C^{n\times n})^2$ obeying the curvature condition $(\Xi+\Lambda) b_2 - \partial_{\bar \mu} b_1 +[b_1,b_2]= 0 $, where $\Xi,\partial_{\bar \mu}$ are as in \eqref{coo0} and $\Lambda$ is as in \eqref{defb}.
Then by the Oka-Grauert principle on the $\mu$-disk (Lemma \ref{ogdisk}) there exists a solution $\varphi\in C^\infty(Z,GL(n,
\C))$ of
\begin{equation}
\partial_{\bar \mu} \varphi - \varphi b_2 = 0,
\end{equation}
which means that
$
(a_1,a_2):=\varphi(\Xi + b_1 , \partial_{\bar \mu} + b_2) \varphi^{-1}\equiv \left(\varphi \Xi \varphi^{-1} + \varphi b_1 \varphi^{-1},0\right)
$
defines an equivalent representative with $a_2\equiv 0$, as desired.
\end{proof}

In view of the preceding correspondence principle, Theorem \ref{mainhif} can be reformulated as:

\begin{theorem}[Transport Oka-Grauert principle]\label{tog} Suppose $Z$ is the twistor space of  a simple surface $(M,g)$. Then $\M=\M_n(Z)=0$ for all $n\in \N$. \qed
\end{theorem}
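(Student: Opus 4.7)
The proof is essentially a one-line consequence of two previously established results, so the plan is simply to chain them together. The key observation is that Theorem \ref{mainhif} can be reformulated as a transitivity statement for the $\G$-action on $\mho$. Indeed, if $F \in \G$ is a holomorphic integrating factor for $\A \in \mho$, then $(X+\A)F = 0$ rearranges to
\begin{equation*}
\A \act F = F^{-1} X F + F^{-1} \A F = 0,
\end{equation*}
so $\A$ lies in the $\G$-orbit of $0 \in \mho$. Since Theorem \ref{mainhif} guarantees such an $F$ for every $\A \in \mho$, the quotient $\mho/\G$ collapses to a single point, namely the orbit of $0$.

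Next I would invoke the twistor correspondence. Since $(M,g)$ is simple, it is diffeomorphic to a disk (this is recorded in \textsection \ref{factorisation} and is standard, cf.\ \cite{PSU20}), so the hypothesis of Proposition \ref{tcB} is satisfied and we obtain the isomorphism
\begin{equation*}
\M = \M_n(Z) \cong \mho/\G.
\end{equation*}
Combining with the previous paragraph, $\M_n(Z)$ consists of the single class corresponding to $\A = 0$, which under the map $\A \mapsto [A^{0,1}]$ from \eqref{littlea} is precisely the class of the trivial partial connection $A^{0,1} = 0$. This is exactly the assertion $\M_n(Z) = 0$, completing the proof.

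There is no genuine obstacle at this stage of the argument since all the substantive work — the Nash--Moser scheme, the tame right inverse for $\T_\A$, and the correspondence results of \textsection \ref{twistor} — has already been carried out. The only minor point worth noting is that the two passes of Proposition \ref{tcB} (invariance under $\G$ and the disk hypothesis giving surjectivity) are both required: without surjectivity one would only know that the trivial class is hit, not that it is the unique class in $\M$. Since Proposition \ref{tcB} provides both, the statement follows directly with no further work, which matches the \qed already appended to the theorem statement in the excerpt.
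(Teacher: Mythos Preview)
Your proposal is correct and matches the paper's own reasoning: the theorem is stated with a \qed precisely because it is an immediate reformulation of Theorem \ref{mainhif} via the isomorphism $\M \cong \mho/\G$ from Proposition \ref{tcB}, and you have spelled out both steps (transitivity of the $\G$-action from the existence of HIFs, and the disk hypothesis needed for surjectivity in the correspondence) accurately.
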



Note that our proof does {\it not} rely on an integrability theorem as in \cite[Theorem 2.1.53]{DoKr90} -- that is, we do not first establish the existence of local holomorphic frames, which are then `glued' by means of a Cartan lemma as in the proof of the standard Oka-Grauert principle. A local integrability theorem -- for general twistor spaces -- follows a posteriori:

\begin{corollary}[Local integrability]\label{localint} Let $Z$ be the twistor space of an arbitrary oriented Riemannian surface. Consider a class $[A^{0,1}]\in \M_n(Z)$ and a point $p=(x,v)\in Z$ with $x\in M^\interior$. Then there exists an open neighbourhood $U$ of $p$ and a gauge $\varphi\in C^\infty(U,GL(n,\C))$ with $(\bar \partial + A^{0,1})\varphi=0$.
\end{corollary}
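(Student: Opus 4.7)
The plan is to localize: since $p\in Z\setminus \partial^1 Z$ has its base point $x_0$ lying in the interior of $M$, I would shrink to a small geodesic disk around $x_0$ which is itself a simple surface, and then apply the transport Oka-Grauert principle (Theorem \ref{tog}) on this smaller surface, transporting the resulting gauge back up to $Z$.

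First I would write $p=[(x_0,v_0,\omega_0)]$ with $x_0\in M^{\interior}$ and pick a closed geodesic disk $\bar D\subset M^\interior$ centred at $x_0$ of sufficiently small radius $r>0$. Classical Riemannian geometry ensures that for small enough $r$ the disk $\bar D$ is simple: strict convexity of the boundary follows from the positive-definiteness of the second fundamental form of small geodesic spheres, non-trapping is automatic as geodesics escape small convex disks in finite time, and absence of conjugate points holds provided $r$ lies below the injectivity radius at $x_0$. By shrinking further, we may arrange that $\bar D$ lies inside an open isothermal chart $O\subset M^\interior$.

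With this chart in hand, Lemma \ref{lemcoo2} provides a diffeomorphism $\kappa_O:Z_O=O\times\DD\to \kappa_O(Z_O)\subset Z$ intertwining the $\bar\partial$-complexes. Set $\tilde A:=\kappa_O^* A^{0,1}\in \Omega^{0,1}(Z_O,\C^{n\times n})$ and note that the flatness condition $(\bar\partial + \tilde A)^2=0$ is preserved under pullback. Restricting to $Z_{\bar D}=\bar D\times\DD\subset Z_O$ (the twistor space of the simple surface $\bar D$), we obtain a class $[\tilde A\vert_{Z_{\bar D}}]\in \M_n(Z_{\bar D})$. Since $\bar D$ is simple and diffeomorphic to a disk, Proposition \ref{tcB} combined with Theorem \ref{tog} yields $\M_n(Z_{\bar D})=0$. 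Consequently there exists $\tilde\varphi\in C^\infty(Z_{\bar D},GL(n,\C))$ with $(\bar\partial + \tilde A)\tilde\varphi = 0$ on $Z_{\bar D}$.

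Finally I would transport this gauge back to $Z$. Let $U:=\kappa_O(D\times\DD)$, an open neighbourhood of $p$ in $Z$ (open because $D$ is open in $M$ and $\kappa_O$ is a diffeomorphism), and define $\varphi:=\tilde\varphi\circ \kappa_O^{-1}\in C^\infty(U,GL(n,\C))$. The intertwining property of Lemma \ref{lemcoo2}(ii) then delivers $(\bar\partial+A^{0,1})\varphi=0$ on $U$, as required. The only geometric input of substance is the existence of a simple geodesic disk around any interior point — all remaining steps amount to routine transport of data through the diffeomorphism $\kappa_O$, so the real work is bundled inside Theorem \ref{tog}.
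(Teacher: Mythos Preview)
Your proof is correct and follows essentially the same approach as the paper: localise to a simple sub-surface containing the base point of $p$, apply Theorem \ref{tog} on its twistor space, and read off the gauge. The paper's version is terser, simply asserting that the twistor space $Z_1$ of such a simple surface $M_1\subset M^\interior$ sits inside $Z$, while you spell out the identification explicitly via isothermal coordinates and Lemma \ref{lemcoo2}; this extra bookkeeping is harmless and arguably clarifies why the restriction of $A^{0,1}$ to $Z_1$ defines a bona fide class in $\M_n(Z_1)$.
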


\begin{proof}
There exists a simple surface $M_1\subset M^{\interior}$ containing $x$ in its interior -- its twistor space $Z_1$ is then a subset of $Z$. By Theorem \ref{tog}, we have $[A^{0,1}\vert_{Z^1}]=0\in \M_n(Z_1)$ and thus the corollary follows with $U=Z_1\vert_{M_1^\interior}$.
\end{proof}

A further consequence is the following `vanishing theorem' in the spirit of Cartan's Theorem B. In fact, this is a reformulation of the linear result in Proposition \ref{xaonto} (modulo tame estimates) and thus does not require an inverse function theorem.

\begin{corollary}\label{corco} Suppose $Z$ is the twistor space of a simple surface $(M,g)$. Then for $[A^{0,1}]\in \M$ the $1$st cohomology of the twisted $\bar \partial$-complex \eqref{twisteddbar} vanishes, i.e.
\begin{equation}\label{cohomology}
H^1_{\bar \partial}(Z,[A^{0,1}])\equiv\frac{\ker \left((\bar \partial + A^{0,1})\vert_{\Omega^{0,1}(Z)}\right)}{ \operatorname{im}  \left((\bar \partial + A^{0,1})\vert_{\Omega^0(Z)}\right)} = 0.
\end{equation}
\end{corollary}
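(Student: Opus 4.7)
The plan is to combine the twistor correspondence from Proposition \ref{tcB} with the linear solvability result of Proposition \ref{xaonto}. Since simple surfaces are diffeomorphic to the disk, surjectivity in Proposition \ref{tcB} allows me to pick a representative of the form $A^{0,1}=(a,0)$ with $a(x,v,\omega)=\sum_{k\ge -1}\omega^{k+1}\A_k(x,v)$ for some $\A\in\mho$. With this choice the twisted operator acts on $f\in\Omega^0(Z,\C^n)$ by $(\bar\partial+A^{0,1})f=(\xi f+af,\,\partial_{\bar\omega}f)$, where $\xi=\omega^2\eta_++\eta_-$, and on $\beta=(b_1,b_2)\in\Omega^{0,1}(Z,\C^n)$ by $(\bar\partial+A^{0,1})\beta=\xi b_2-\partial_{\bar\omega}b_1+ab_2$.

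Given a closed $\beta=(b_1,b_2)$, I would first reduce to the case $b_2=0$ by solving $\partial_{\bar\omega}f_0=b_2$ for some $f_0\in\Omega^0(Z,\C^n)$. The natural candidate is the fibrewise Cauchy transform
\begin{equation*}
f_0(x,v,\omega):=-\frac{1}{2\pi i}\int_\DD \frac{b_2(x,v,\zeta)}{\zeta-\omega}\,d\bar\zeta\wedge d\zeta,
\end{equation*}
which is smooth on $SM\times \DD$ with $\partial_{\bar\omega}f_0=b_2$. To verify $\V f_0=0$, I would exploit the transformation law $b_2(\varphi^V_t(x,v),e^{-it}\omega)=e^{-it}b_2(x,v,\omega)$ (implied by $(\V+i)b_2=0$) and perform the change of variables $\zeta\mapsto e^{-it}\zeta$ in the defining integral, which leaves the kernel and the domain $\DD$ invariant. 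Replacing $\beta$ by $\beta-(\bar\partial+A^{0,1})f_0=(b_1',0)$ with $b_1':=b_1-(\xi+a)f_0$, the cocycle identity $\partial_{\bar\omega}b_1=(\xi+a)b_2$, combined with the commutation relations $[\partial_{\bar\omega},\xi]=0=[\partial_{\bar\omega},a]$, yields $\partial_{\bar\omega}b_1'=0$.

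In the second step, $b_1'$ is holomorphic in $\omega$ and satisfies $(\V+i)b_1'=0$, so by Remark \ref{invariantforms} applied with $k_0=-1$ it expands as $b_1'(x,v,\omega)=\sum_{k\ge 0}\omega^{k}\tilde b_{k-1}(x,v)$, with $\tilde b=\sum_k \tilde b_{k-1}\in\oplus_{m\ge -1}\Omega_m$ recovered as the boundary restriction $\tilde b(x,v)=b_1'(x,v,1)$. Proposition \ref{xaonto} then furnishes $\tilde f\in\oplus_{k\ge 0}\Omega_k$ with $(X+\A)\tilde f=\tilde b$, and I would lift this via $f_1(x,v,\omega):=\sum_{k\ge 0}\omega^k\tilde f_k\in\Omega^0(Z,\C^n)$, a function holomorphic in $\omega$. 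Since both $(\xi+a)f_1$ and $b_1'$ are $\omega$-holomorphic and annihilated by $\V+i$ -- a class bijectively parametrised by its restriction to $\omega=1$ again via Remark \ref{invariantforms} -- and since these restrictions coincide by construction, one concludes $(\xi+a)f_1=b_1'$ identically. Setting $f=f_0+f_1\in\Omega^0(Z,\C^n)$ produces the primitive demanded by \eqref{cohomology}.

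The substantive content of the argument is the invocation of Proposition \ref{xaonto}, which is where the simplicity of $(M,g)$ enters through the injectivity of attenuated X-ray transforms. Everything else is a formal transcription between the transport and twistor languages, with Remark \ref{invariantforms} serving as the dictionary. In particular, no Nash-Moser iteration or tame estimates are required, since only the linear surjectivity of $X+\A$ (and not the tame bound on its right inverse) is being used.
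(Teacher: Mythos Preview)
Your argument is correct and follows exactly the strategy sketched in the paper: reduce to a representative $A^{0,1}=(a,0)$ via Proposition~\ref{tcB} (using gauge invariance of the cohomology), solve away the $\partial_{\bar\omega}$-component, translate the remaining $\omega$-holomorphic datum to an element of $\oplus_{k\ge -1}\Omega_k$ via Remark~\ref{invariantforms}, apply Proposition~\ref{xaonto}, and lift back. Your treatment is in fact more detailed than the paper's sketch---the explicit fibrewise Cauchy transform and the verification of $\V f_0=0$ make precise what the paper abbreviates as ``solvability of the $\partial_{\bar\omega}$-equation'', and your appeal to the bijection in Remark~\ref{invariantforms} is equivalent to the paper's ``invariance and maximum principle'' argument (the maximum principle being what establishes that bijection in the proof of Proposition~\ref{tcA}).
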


\begin{proof} We give a brief sketch: It is straightforward to see that \eqref{cohomology} is gauge-invariant, so by Proposition \ref{tcB} we may assume that $A^{0,1}=(a,0)$, where $a$ is as in \eqref{littlea} for an attenuation $\A \in \mho$. Next, using solvability of the $\partial_{\bar \omega}$-equation, any cohomology class may be represented by a tuple $(h_1,0)\in \Omega^{0,1}(SM,\C^n)$. Via Remark \ref{invariantforms} the function $h_1$ gives rise to a element in $\oplus_{k\ge -1}\Omega_k$ and we are in the setting of Proposition \ref{xaonto}. From this, one deduces that there is a solution $h\in \Omega^0(Z,\C^n)$ to $(\bar \partial + A^{0,1})h=(h_1,0)$ -- first for $\omega=1$, then for all $\omega \in \DD$ by invariance and the maximum principle.
\end{proof}


\subsection{Discussion of related work}\label{relatedwork} 

The twistor space $Z$ considered in this article is closely related to the more classical twistor notion from \cite{Dub83,OR85}, used recently, e.g., in the context of projective structures \cite{Met21,MePa20}. To explain this relation, we first note that the constructions from \textsection \ref{cxsurface} can be carried out in greater generality by substituting the vector fields $\V$ and $\xi=\omega^2\eta_++\eta_-$ by
\begin{equation}
\V_n = V + i n (\bar \omega \partial_{\bar \omega} - \omega \partial_\omega),\quad \text{ and} \quad \xi_k =\begin{cases}
	\omega^k \eta_+ + \eta_- & k \ge 0\\
	\bar \omega^{-k} \eta_+ + \eta_- & k<0
\end{cases},
\end{equation} 
for $n,k\in \Z$, respectively. A computation similar to Lemma \ref{cxstructure} shows that $[\xi_k,\V_n]=i\xi_k$ if and only if $nk=2$, such that we obtain four twistor spaces
\begin{equation}
Z(n) = \big((SM\times \DD)/S^1,~ \D_n=p_*\spn \{\xi_{2/n},\partial_{\bar \omega} \}  \big),\quad n\in \{\pm 1, \pm 2\},
\end{equation}
where the quotient is taken with respect to the flow of $\V_n$. Then $(x,v,\omega)\mapsto (x,v,\omega^2)$ induces holomorphic maps $Z(\pm 1)\rightarrow Z(\pm 2)$ and, in particular, $Z\equiv Z(1)$ may be viewed as branched double cover of the space $Z(2)$ (branched double covers were also found useful in \cite{LM10}).

We claim that the interior of $Z(2)$ is precisely the twistor space considered in the articles mentioned above. Using the description in \cite{MePa20}, this means that there is a biholomorphic map
\begin{equation} \label{defF}
\mathcal{F}: Z(2)^\interior \xrightarrow{\sim} P/CO(2),
\end{equation}
where $P$ is the oriented frame bundle and $CO(2)$ is the group of dilations and rotations of $\R^2$. Here we assume that the projective class $\mathfrak{p}$ that is used to define the complex structure on $P/CO(2)$, as explained  in  \cite[\textsection 4.1]{MePa20}, is given by $\mathfrak{p}=[\nabla^g]$ for the Levi-Civita connection $\nabla^g$ of $(M,g)$. To construct $\mathcal{F}$, consider
\begin{equation} 
SM\times \DD^\interior \xrightarrow{\mathcal{R}} P \times \DD^\interior \xrightarrow{\Upsilon} P,
\end{equation}
with $\Upsilon$ as in \cite[\textsection 4.1]{MePa20} and $\mathcal{R}(x,v,\omega)=(x,f_v,-\omega)$, where $f_v=(v,v^\perp)\in P_x$. Here $v^\perp$ is the rotation of $v$ by $\pi/2$, counterclockwise with respect to the orientation of $M$. 
As below equation (4.6) in \cite{MePa20} one checks that
\begin{equation}
\mathcal{R}\left((x,v,\omega)\act e^{it}\right) = \left((x,f_v)\act R_t, R_t^{-1}\lact (-\omega)\right) = \mathcal{R}(x,v,\omega)\act R_t,
\end{equation}
where $R_t\in GL^+(2,\R)$ is the rotation matrix corresponding to $e^{it}$. This shows that $\mathcal{R}$ induces a smooth map between the quotient spaces  $Z(2)^\interior \equiv (SM\times \DD^{\interior})/S^1$ and $P\times_{GL^+(2,\R)}\DD^{\interior}$. Also $\Upsilon$ descends to quotient spaces and thus $\mathcal{F}([(x,v,\omega])=[\Upsilon\circ\mathcal{R}(x,v,\omega)]$ defines a smooth map $\mathcal{F}$ as in \eqref{defF}.  To check that $\mathcal{F}$ is also holomorphic one can use the description of $(1,0)$-forms on $P$ from \cite{MePa20}, in particular the computation of their pull-backs by $\Upsilon$ in $(4.3)$ and $(4.5)$. Pulling these back by $\mathcal{R}$ one obtains (nonzero multiples) of the following $1$-forms on $SM\times \DD^\interior$:
\begin{equation}
\eta_+^\vee - \omega \eta_-^\vee \quad \text{ and } \quad \d \omega + 2i \omega V^\vee
\end{equation}
After complex conjugation these equal precisely the $1$-forms in Lemma \ref{lemisotoast} (with an additional factor $2$ which is due to the choice of $n=2$ here). This shows that $\mathcal{F}$ is a holomorphic immersion. It is easily checked that $\mathcal{F}$ is bijective, so overall we obtain an isomorphism as in \eqref{defF}.


\medskip

Next, we briefly discuss the work of Eskin and Ralston \cite{EsRa04}, who proved a version of Theorem \ref{tog} in a Euclidean setting. They establish the existence of gauges $\varphi$ -- that is, $GL(n,\C)$-valued solutions to $(\bar \partial +A^{0,1})\varphi=0$ -- that are smooth in $Z^{\interior}$ and have a {\it continuous} extension to $\partial Z$. We give a brief outline of their argument in the language developed above. Recall from \textsection\ref{coordinates} that the twistor space of $\R^2$ admits a `blow down' map $\beta:Z\rightarrow W$ into a (closed) polydisk. The punchline of \cite{EsRa04} is that pull back by $\beta$ gives a surjective map 
\begin{equation}\label{famoustwelve}
\beta^*: \widetilde\M(W)\rightarrow \widetilde \M(Z)
\end{equation}
between appropriate moduli spaces containing `holomorphic vector bundles' which are trivial away from a compact set and have a {\it continuous} extension to the boundary. The result then follows from  the classical Oka-Grauert principle on $W$ -- in a version with continuous boundary values (cf.\,\cite[Theorem~10.1]{Lei86}) -- which implies $\widetilde \M(W)=0$. Their approach thus parallels the desingularisation by means of a blow down in \cite{LeMa02}.

In order to establish surjectivity of $\beta^*$ as in \eqref{famoustwelve}, the authors prove a local integrability result as in Corollary \ref{localint} {\it a priori} (using the inverse function theorem in a H{\"o}lder space, where no loss of derivatives occurs) and then glue local solutions to $(\bar \partial +A^{0,1})\varphi=0$ by means of an appropriate Cartan lemma. The crucial step lies in showing that by such a gluing procedure one can arrange all transition functions to be of the form $h=\beta^* g$ for locally defined functions $g$ on $W$ (cf.\,equation (12) in \cite{EsRa04}) -- this is quite delicate and  encompasses removing singularites at $\beta(\partial Z)$ using methods from complex analysis.



\section{Range characterisations}\label{range}

We now turn to the range characterisation for the non-Abelian X-ray transform, starting with some general considerations that hold on any non-trapping surface $(M,g)$ with strictly convex boundary. Define
\begin{equation}\label{defB}
B:C^\infty(\partial SM,GL(n,\C))\rightarrow C^\infty(\partial_+SM,GL(n,\C)),\quad f\mapsto f(f^{-1}\circ\alpha)\vert_{\partial_+SM},
\end{equation}
where $\alpha:\partial SM\rightarrow \partial SM$ is the scattering relation of $(M,g)$ (see \textsection \ref{introrange}). To motivate the range characterisations in this section, consider the following diagram:
\begin{equation}\label{magicdiagram}
\begin{tikzcd}
  C_\Id^\infty(SM,GL(n,\C))  \arrow[two heads]{d}{(\cdot)\vert_{\partial SM}} \arrow{dr}\arrow[two heads]{r}{\I^*} & C^\infty(SM,\C^{n\times n}) \arrow{d}{\A\mapsto C_\A}\\
C_\Id^\infty(\partial SM,GL(n,\C)) \arrow{r}{B} & C^\infty(\partial_+SM,GL(n,\C)).
\end{tikzcd}
\end{equation}
Here and below, double-headed arrows stand for surjections; further $C_\Id^\infty(\cdot,GL(n,\C))$ is the space of maps which are homotopic to $\Id$ and we define \begin{equation}
\I^*(R)=-(XR)R^{-1}.
\end{equation}
The map $\I^*$, while not being an adjoint in any natural way, serves a similar purpose as $I^*$ in the linear theory. This is illustrated by the following lemma and further substantiated in \textsection \ref{rangeuattenuations}, where surjectivity results for $\I^*$ in different settings are derived using Theorem \ref{mainhif} on simple surfaces.

\begin{lemma} The diagram \eqref{magicdiagram} commutes and the map $\I^*: C_\Id^\infty(SM,GL(n,\C))\rightarrow C^\infty(SM,\C^{n\times n})$ is surjective.
\end{lemma}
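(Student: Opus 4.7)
I would start from $R\in C_\Id^\infty(SM,GL(n,\C))$ and set $\A = \I^*R = -(XR)R^{-1}$, so that by construction $(X+\A)R = 0$. The plan is to compare $R$ with the canonical solution $R^0$ characterised by $R^0|_{\partial_-SM} = \Id$. Putting $c := R^{-1}R^0$ and using $XR = -\A R$ together with $XR^0 = -\A R^0$, one computes $Xc = 0$, so $c$ is a first integral of the geodesic flow with $c|_{\partial_-SM} = R^{-1}|_{\partial_-SM}$. Transporting this boundary value along geodesics via the scattering relation yields $c|_{\partial_+SM} = (R^{-1}\circ\alpha)|_{\partial_+SM}$, and multiplying by $R$ gives
\[
C_\A \;=\; R^0|_{\partial_+SM} \;=\; \bigl(R\cdot(R^{-1}\circ\alpha)\bigr)\big|_{\partial_+SM} \;=\; B\bigl(R|_{\partial SM}\bigr),
\]
which is precisely the commutativity of \eqref{magicdiagram}.

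\textbf{Surjectivity.} For the second claim, given $\A\in C^\infty(SM,\C^{n\times n})$ the plan is the standard extension trick already used in the proof of Lemma \ref{lemC}. I would embed $(M,g)$ into a slightly larger open non-trapping surface $(M_1,g)$ with strictly convex boundary, extend $\A$ to a smooth attenuation $\A_1:SM_1\to\C^{n\times n}$ with compact support in $SM_1^\interior$, and let $R:SM_1\to GL(n,\C)$ be the unique continuous solution of $(X+\A_1)R = 0$ with $R = \Id$ on $\partial_-SM_1$. Since $\A_1$ vanishes in a neighbourhood of $\partial SM_1$, the solution equals $\Id$ near the glancing region, so $R$ is smooth on all of $SM_1$ and its restriction to $SM$ is a smooth $GL(n,\C)$-valued function satisfying $\I^*R = \A$.

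\textbf{Homotopy to $\Id$.} Finally, to check that this $R$ lies in $C_\Id^\infty$, I would interpolate through the family $R_s:SM_1\to GL(n,\C)$ solving $(X + s\A_1)R_s = 0$ with $R_s = \Id$ on $\partial_-SM_1$ for $s\in [0,1]$. At $s = 0$, uniqueness forces $R_0\equiv \Id$, and $s\mapsto R_s$ depends continuously (in fact smoothly) on the parameter by standard ODE theory, so $s\mapsto R_s|_{SM}$ provides a continuous path in $C^\infty(SM,GL(n,\C))$ from $\Id$ to $R$. I do not expect any substantive obstacle in either half of the argument: the commutativity reduces to the uniqueness-of-solutions calculation above, and the surjectivity is ensured by the standard gambit of pushing $\A$ away from the glancing set before solving the transport equation.
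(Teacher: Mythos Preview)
Your argument is correct and follows essentially the same approach as the paper. The commutativity computation is identical up to notation (the paper writes $R^0=FG$ with $XG=0$ and $G=F^{-1}$ on $\partial_-SM$, which is just your $c=R^{-1}R^0$ read backwards). For surjectivity, both you and the paper use the extension trick to push $\A$ into a compactly supported attenuation on a larger surface, guaranteeing smoothness via triviality near the glancing region. The only genuine variation is in the homotopy: the paper embeds into a closed manifold and scales \emph{time} in the associated cocycle, setting $R_s(x,v)=[C(x,v,s\tau_0(x,v))]^{-1}$, whereas you scale the \emph{attenuation} via $(X+s\A_1)R_s=0$. Both produce a smooth family connecting $R$ to $\Id$; your version is arguably more direct since it avoids the auxiliary closed manifold and the cocycle formalism.
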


\begin{proof}
We assume that the diagonal arrow is $B(\cdot\vert_{\partial SM}$) such that the lower triangle commutes. To check that the upper triangle commutes, let $F\in   C_\Id^\infty(SM,GL(n,\C))$ and denote $\A = \I^*(F)$. Let $G:SM\rightarrow GL(n,\C)$ be the unique continuous solution (differentiable along the geodesic flow) of the transport problem
\begin{equation}
XG = 0 \quad \text{ and } \quad G = F^{-1} \text{ on } \partial_-SM.
\end{equation}
Then $R=FG$ satisfies $(X+\A)R=0$ and $R=\Id$ on $
\partial_-SM$. In particular, using that $G\vert_{\partial SM}$ is $\alpha$-invariant, we have
\begin{equation}
C_\A = R \vert_{\partial_+SM} = B(R\vert_{\partial SM}) = F G (G^{-1}\circ \alpha)(F^{-1}\circ\alpha)\vert_{\partial_+SM} = B(F\vert_{\partial SM}).
\end{equation}
To check that $\I^*$ is onto, we have to show that any $\A\in C^\infty(SM,\C^{n\times n})$ admits a smooth, contractible integrating factor. To this end, embed $(M,g)$ into a closed manifold $(N,g)$ and extend $\A$ smoothly to $N$. Then there is a smooth cocycle $C:SN\times \R\rightarrow GL(n,\C)$ associated to $\A$, uniquely defined by
\begin{equation}
\partial_t C(x,v,t) + \A C(x,v,t) = 0 \text{ on } SN\times \R \quad \text{ and } \quad C(x,v,0)=\Id \text{ on } SN.
\end{equation}
Let $M_0\subset N$ be a non-trapping surface with strictly convex boundary, containing $M$ in its interior. Let $\tau_0$ be the exit time of $M_0$ (which is smooth on $SM$) and define
\begin{equation}
R_s(x,v) = [C(x,v,s\tau_0(x,v))]^{-1},\quad 0\le s \le 1, (x,v)\in SM.
\end{equation}
Then $R_1\in C^\infty(SM,GL(n,\C))$ is a smooth integrating factor for $\A$ (cf.\,Lemma 5.3.2 in \cite{PSU20}) and sending $s\rightarrow 0$ provides a homotopy with $\Id$, as desired.
\end{proof}

Using the preceding lemma, a simple diagram chase in \eqref{magicdiagram} reveals  a first range characterisation:

\begin{proposition}
An element $q\in C^\infty(\partial_+SM,GL(n,\C))$ is given as scattering data $q=C_\A$ of a general attenuation $\A\in C^\infty(SM,\C^{n\times n})$ if and only if $q=Bf$ for some $f\in C_\Id^\infty(\partial SM,GL(n,\C))$.\qed
\end{proposition}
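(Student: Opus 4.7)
The statement is a direct diagram chase in \eqref{magicdiagram}, combining the two facts just established in the lemma (commutativity of the diagram and surjectivity of $\I^*$) with one further input: the surjectivity of the restriction arrow $(\cdot)\vert_{\partial SM}: C_\Id^\infty(SM, GL(n,\C)) \to C_\Id^\infty(\partial SM, GL(n,\C))$, the other double-headed map appearing in the diagram.

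For the forward direction, I would suppose $q = C_\A$ for some $\A \in C^\infty(SM,\C^{n\times n})$. Using surjectivity of $\I^*$, pick $F \in C_\Id^\infty(SM, GL(n,\C))$ with $\I^*(F) = \A$ and set $f := F\vert_{\partial SM}$. Then $f \in C_\Id^\infty(\partial SM, GL(n,\C))$, and commutativity of the upper triangle (hence of the whole diagram) yields $q = C_{\I^*(F)} = B(F\vert_{\partial SM}) = Bf$.

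For the backward direction, suppose $q = Bf$ for some $f \in C_\Id^\infty(\partial SM, GL(n,\C))$. Lift $f$ to a contractible $F \in C_\Id^\infty(SM, GL(n,\C))$ with $F\vert_{\partial SM} = f$, and define $\A := \I^*(F) \in C^\infty(SM, \C^{n\times n})$. Commutativity of the lower triangle then gives $C_\A = B(F\vert_{\partial SM}) = Bf = q$.

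The only point requiring justification is the existence of the lift in the backward direction, and this I expect to be the only mild obstacle. Since $(M,g)$ is non-trapping with strictly convex boundary, $M$ is diffeomorphic to a disk and hence $SM \cong D^2 \times S^1$ is a solid torus with $\partial SM \cong T^2$. Given a null-homotopy $h:[0,1]\times \partial SM \to GL(n,\C)$ with $h_0 = \Id$ and $h_1 = f$, one extends $f$ into a collar $[0,\epsilon)\times \partial SM$ of $\partial SM$ via a formula of the form $F(s,y) = h_{\chi(s)}(y)$ for a smooth cut-off $\chi$ with $\chi(0)=1$ and $\chi(s)=0$ for $s\ge \epsilon/2$, and sets $F \equiv \Id$ on the rest of $SM$. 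This produces a smooth contractible extension, and no deeper obstruction theory is needed. In summary, the content of the proposition is entirely absorbed by the preceding lemma.
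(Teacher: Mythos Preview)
Your proposal is correct and is exactly the diagram chase in \eqref{magicdiagram} that the paper alludes to (the proposition is stated with a \qed\ and no explicit proof). The only addition you make is a justification for the surjectivity of the restriction arrow via a collar extension, which the paper leaves implicit in the double-headed arrow; this is a welcome clarification and poses no issue.
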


In the remaining section we give similar characterisations, when $\A$ is restricted to certain subclasses $\mathcal{A}\subset C^\infty(SM,\C^{n\times n})$ of attenuations. This involves finding appropriate domains $\mathcal{D}$ and {\it boundary spaces} $\mathcal{B}$ for which there is a diagram
\begin{equation}
\begin{tikzcd}
	\mathcal{D} \arrow[two heads]{r}{\I^*} \arrow{dr} \arrow[two heads]{d} &\mathcal{A}\arrow{d}{\A\mapsto C_\A}\\
	\mathcal{B} \arrow{r}{P} & C^\infty(\partial_+SM,GL(n,\C)),
\end{tikzcd}
\end{equation}
where $P$ is an appropriate {\it boundary operator} and the arrows emerging from $\mathcal D$ are surjective. If such a diagram commutes (up to gauge), then the range of $\mathcal{A}\ni \A\mapsto C_\A$ equals that of $P$ (up to gauge).

\subsection{Nonlinear Hilbert transforms} \label{hilbertsection} As building block for the boundary operators considered below we introduce here a nonlinear operator
\begin{equation}\label{hilbert1}
\H:C_\extend^\infty(\partial SM,GL(n,\C)) \rightarrow C_\extend^\infty(\partial SM,U(n))
\end{equation}
which is based on the factorisation theorems discussed in \textsection \ref{factorisation} (see Remark \ref{boundaryfac} for the $\extend$-notation) and serves as analogue of the Hilbert transform in the linear theory. Upon choosing a section ${\1}: M \rightarrow SM$ (or equivalently, fixing a trivialisation of $SM$) we define $\H\equiv\H_{\1}$ by
\begin{equation}
\H (r) = u^*,
\end{equation}
where $r=uf$ is a decomposition as in \eqref{bfac2}, normalised such that $u(x,{\1}(x))=\Id$. In reference to $\H$ we also define
\begin{align}
\hspace{-1em} \H^*:C_\extend^\infty(\partial SM,GL(n,\C)) \rightarrow C_\extend^\infty(\partial SM,U(n)),& \quad \H^*(r) =\H(r^{-1})\\
\hspace{-1em} \H^+:C^\infty(\partial SM,\he^+_n) \rightarrow C_\Id^\infty(\partial SM,GL(n,\C)),&\quad  \H^+(r) =\H(r^{1/2})r^{1/2}. \label{hilbert+}
\end{align}
Both transforms can be described in terms of suitable decompositions: Indeed, $\H^*(r)=u$, where $r=fu$ as in \eqref{bfac3}, normalised  such that $u(x,\1(x))=\Id$ and $\H^+(r)=f$, where $r=f^*f$ is a Birkhoff factorisation as in \eqref{bfac1}, with normalisation inherited from $\H$.

We introduce two further types of `nonlinear Hilbert transforms', which do not depend on a choice of $\1$, but are only available if $r$ admits a special decomposition. To this end, define spaces
\begin{equation}\label{zerospace}
C_0^\infty(\partial SM,GL(n,\C))\quad \text{ and } \quad C_0^\infty(\partial SM,\he^+_n),
\end{equation}
as follows: An element $r\in C_\extend^\infty(\partial SM,GL(n,\C))$ lies in the left space, if it admits a (necessarily unique) decomposition $r=uf$ as in \eqref{bfac2} with $f_0=\Id$ --  we then write $\H^0(r)=u^*$. Further, 
$r\in C^\infty(\partial SM,\he^+_n)$ lies in the right space in \eqref{zerospace}, if $r^{1/2}\in C_0^\infty(\partial SM,GL(n,\C))$ and we set $\H^{+,0}(r) =\H^0(r^{1/2})r^{1/2}$. We obtain transforms:
\begin{eqnarray}\label{hilbert0}
\H^0
&:&C_0^\infty(\partial SM,GL(n,\C))\rightarrow C_\extend^\infty(\partial SM,U(n)),\\
 \H^{+,0}&:&C_0^\infty(\partial SM,\he^+_n)\rightarrow C_\Id^\infty(\partial SM,GL(n,\C)). \label{hilbert+0}
\end{eqnarray}
Next, we consider the space $C_\Delta^\infty(\partial SM,GL(n,\C))$ consisting of those maps $r\in C_\Id^\infty(\partial SM,GL(n,\C))$ which factor uniquely as
$
r=gf$, where $f,g^*\in \HH$ (with $\HH$ as in Remark \ref{boundaryfac}) and $g_0=\Id$. 
With respect to this factorisation we define
\begin{equation}\label{hilbertjumping}
\H^\Delta: C_\Delta^\infty(\partial SM,GL(n,\C)) \rightarrow \G, \quad \H^\Delta(r)=f.
\end{equation}
We discuss the relation of $\H^\Delta$ to Birkhoff factorisations below Theorem \ref{rangeglpairs}.

\begin{example} We consider the `nonlinear Hilbert transforms' from above for $n=1$. A general element in $C_\extend^\infty(\partial SM,GL(1,\C))$ has the form 
\begin{equation}
r = e^{ik\theta} e^{\psi + i\sigma}, \text{ where } k\in \Z,\psi,\sigma\in C^\infty(\partial SM,\R).
\end{equation}
Let $\psi=\psi_{<0}+ \psi_0+\psi_{>0}$ be the decomposition into negative, zero and positive Fourier modes. Then the standard, linear Hilbert transform of $\psi$ is 
$
H\psi = (\psi_{>0}-\psi_{<0})/i
$
and thus
$
\psi = -iH\psi + \psi_0+2\psi_{>0},
$
which implies that
\begin{equation}
r = \left(e^{ik \theta} e^{-iH\psi + i\sigma} \right) \times \left(e^{\psi_0+2\psi_{>0}}\right) =: uf
\end{equation}
is a decomposition as in \eqref{bfac2} (not necessarily normalised). Then
\begin{equation}
\H(r) = w e^{iH\psi} e^{-i\sigma - ik\theta}\quad \text{ and } \quad \H^*(r) = w e^{-iH\psi}e^{i\sigma + ik\theta},
\end{equation}
where $w=w_\1\in C^\infty(M,U(n))$ is chosen to achieve the correct normalisation.
If $r$ takes values in $\R_{>0}\equiv \he^+_1$ (such that $k=0$ and $\sigma=0$) we see that $\H$ and $\H^*$ are exponentiated linear Hilbert transforms; further
\begin{equation}
\H^+(r) = w e^{\frac{1}{2}(\psi + i H\psi)}.
\end{equation}
Finally, $r$ is in the domain of $\H^0$ and $\H^{+,0}$ iff $\psi_0=0$ and it is in the domain of $\H^\Delta$ iff $k=0$, in which case $\H^\Delta(r)=e^{\psi_0+ \psi_{>0}+i\sigma_0 + i \sigma_{>0}}$.
\end{example}


\subsection{Range for $\u(n)$-attenuations} \label{rangeuattenuations}
It is instructive to first consider the non-Abelian X-ray transform on the space $\mho$ from \eqref{defmho}. In terms of the right action of $\G$ on $\mho$ (defined in \eqref{rule}), we have $\I^*(F)=0\act F^{-1}$ and hence $\I^*$ fits into an exact sequence (of pointed sets):
\begin{equation} \label{mses}
0\rightarrow \G_0 \hookrightarrow \G \xrightarrow{\I^*} \mho \rightarrow \M \rightarrow 0.
\end{equation}
Here $\G_0$ is the stabiliser of $0\in \mho$ and -- for the purpose of this section -- we think of $\M$ as quotient space $\mho/\G$, such that exactness in \eqref{mses} is evident. The identification $\mho/\G=\M$ is justified by  Proposition \ref{tcB}.

Let us assume now that $\M$ is trival -- by Theorem \ref{mainhif} this holds in particular if $(M,g)$ is simple.
 Then $\I^*:\G\rightarrow \mho$ is surjective and we have a commutative diagram
\begin{equation}\label{magicdiagram2}
\begin{tikzcd}
\G \arrow[two heads]{r}{\I^*} \arrow{dr}\arrow[swap, two heads]{d}{(\cdot)\vert_{\partial SM}} & \mho \arrow{d}{\A\mapsto C_\A} \\
\mathbb{H} \arrow{r}{B} & C^\infty(\partial_+SM,GL(n,\C)),
\end{tikzcd}
\end{equation}
where $\mathbb{H}=\{f = F\vert_{\partial SM}: F\in \G\}$ (see also Remark \ref{boundaryfac}). Note that $\mathbb{H}$ is a genuine boundary space in that it has the intrinsic characterisation
\begin{equation*}
\mathbb{H}=\{f\in C_\Id^\infty(\partial SM,GL(n,\C)):  f \text{ is fibrewise holomorphic} \}.
\end{equation*}
We thus obtain the following range characterisation:

\begin{proposition}
Suppose $\M=0$. Then an element $q\in C^\infty(\partial_+SM,GL(n,\C))$ lies in the range of $\mho\ni \A\mapsto C_\A$ if and only if $q = B f$ for some $f\in \mathbb{H}$.\qed
\end{proposition}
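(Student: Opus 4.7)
The proposition is essentially a diagram chase in \eqref{magicdiagram2}, so the plan is to verify that each arrow behaves as advertised and then exploit commutativity.

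First I would check that $\I^*$ indeed maps $\G$ into $\mho$: if $F\in \G$, then both $F$ and $F^{-1}$ are fibrewise holomorphic, so $F,F^{-1}\in \oplus_{k\ge 0}\Omega_k$. Since $X=\eta_++\eta_-$ shifts Fourier modes by $\pm 1$, the product $-(XF)F^{-1}$ has vanishing Fourier modes for $k<-1$, placing $\I^*(F)$ in $\mho$. So the top-right arrow in \eqref{magicdiagram2} is well defined.

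Next I would observe that the commutativity of \eqref{magicdiagram2} is immediate from the commutativity of the larger diagram \eqref{magicdiagram} established in the preceding lemma, by restricting to $\G\subset C_\Id^\infty(SM,GL(n,\C))$ and noting that the diagonal arrow $B((\cdot)\vert_{\partial SM})$ restricts to the analogous map $\G\to C^\infty(\partial_+SM,GL(n,\C))$. Surjectivity of the left vertical arrow $\G\to \mathbb{H}$ holds by the very definition of $\mathbb{H}=\{F\vert_{\partial SM}:F\in \G\}$. The key input is surjectivity of the horizontal arrow $\I^*:\G\to \mho$: this is equivalent to the statement that $\G$ acts transitively on $\mho$ with $0$ in its orbit (since $0\act F^{-1}=\I^*(F)$), which is exactly the content of the exact sequence \eqref{mses} together with our hypothesis $\M=0$.

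With these three facts in hand the proof splits into two standard diagram chases. For the ``only if'' direction, given $q=C_\A$ with $\A\in \mho$, surjectivity of $\I^*$ produces $F\in \G$ with $\I^*(F)=\A$; commutativity then yields $q=C_{\I^*(F)}=B(F\vert_{\partial SM})$ with $f:=F\vert_{\partial SM}\in \mathbb{H}$. For the ``if'' direction, given $q=Bf$ with $f\in \mathbb{H}$, lift $f$ to some $F\in \G$ and set $\A:=\I^*(F)\in \mho$; commutativity gives $C_\A=B(F\vert_{\partial SM})=Bf=q$.

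There is no real obstacle at this stage: all of the heavy lifting has already been done upstream, namely in Theorem \ref{mainhif} (equivalently Theorem \ref{maintog}), whose Nash--Moser argument supplies $\M=0$ and hence the crucial surjectivity of $\I^*$; the factorisation statements of Remark \ref{boundaryfac} ensure that the boundary space $\mathbb{H}$ has the intrinsic description used in the statement.
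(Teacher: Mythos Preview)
Your proposal is correct and matches the paper's approach exactly: the proposition is stated there with a \qed and no explicit proof, precisely because it follows by the diagram chase in \eqref{magicdiagram2} that you spell out, using the surjectivity of $\I^*$ (from $\M=0$) and the definition of $\mathbb{H}$.
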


Let us now consider attenuations in one of the three classes
\begin{equation}\label{threeclasses}
\mho(\u(n))=\{\u(n)\text{-pairs}\},\quad C^\infty(M,\u(n))\quad \text{ and } \quad  \Omega^1(M,\u(n)),
\end{equation}
all considered as subsets of $\mho$.
Note that $\mho(\u(n)) = \mho \cap C^\infty(SM,\u(n))$, due to the identity $\A_{-k}=-\A_k^*$ ($k\in \Z$) for skew-Hermitian attenuations. The second and third space in \eqref{threeclasses} consist of skew-Hermitian matrix fields  $\Phi$  and connections $A$, respectively.

Define
\begin{equation}
\sqrt{\G_0} := \{F\in \G: X(F^*F)=0\}\subset C^\infty(SM,\C^{n\times n}),
\end{equation}
and recall that a function $F$ on $SM$ (or $\partial SM$) is even, if it only has even Fourier modes or equivalently if it obeys the symmetry condition $F(x,v)=F(x,-v)$.

\begin{proposition}\label{ionto} Suppose that $(M,g)$ is simple. Then $\I^*$ is well defined and surjective in the following settings:
\begin{enumerate}[label=(\roman*)]
\item \label{ionto1} $\I^*:\sqrt{\G_0} \rightarrow \mho(\u(n))$
\item \label{ionto3} $\I^*:\{F\in \sqrt{\G_0}: F \text{ {\rm even}}\} \rightarrow \Omega^1(M,\u(n))$
\item \label{ionto2} $\I^*:\{F\in \sqrt{\G_0}: F_0=\Id\} \rightarrow C^\infty(M,\u(n))$
\end{enumerate}
\end{proposition}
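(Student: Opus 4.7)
I would split the argument into well-definedness and surjectivity, handled case by case.

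\emph{Well-definedness.} For any $F\in\G$ the formula $\I^*(F)=-(XF)F^{-1}$ automatically lies in $\mho$, since $F$ and $F^{-1}$ are fibrewise holomorphic and $X$ lowers the minimum Fourier mode by one. Setting $\A:=\I^*(F)$, the identity $XF=-\A F$ together with Leibniz gives $X(F^*F)=-F^*(\A+\A^*)F$, so the condition $F\in\sqrt{\G_0}$ is equivalent to skew-Hermiticity of $\A$; combined with $\A\in\mho$ and the mode symmetry $\A_{-k}=-\A_k^*$, the modes of $\A$ collapse to $\{-1,0,1\}$, yielding $\A\in\mho(\u(n))$ in case (i). In case (ii), $X$ reverses vertical parity while $F^{-1}$ preserves it, so even $F$ gives odd $\A$; the mode $0$ vanishes and $\A\in\Omega^1(M,\u(n))$. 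In case (iii), $F_0=\Id$ forces $(XF)_{-1}=\eta_-F_0=0$ and $(F^{-1})_0=\Id$, hence $\A_{-1}=0$; skew-Hermiticity then kills $\A_1$ and $\A\in C^\infty(M,\u(n))$.

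\emph{Surjectivity in (i)--(ii).} Given $\A\in\mho(\u(n))$, Theorem \ref{mainhif} furnishes $F\in\G$ with $(X+\A)F=0$, and the same Leibniz computation produces $X(F^*F)=-F^*(\A+\A^*)F=0$; so $F\in\sqrt{\G_0}$ and $\I^*(F)=\A$. Part (ii) is identical, with Proposition \ref{hifeven} supplying an \emph{even} HIF for the odd attenuation $A\in\Omega^1(M,\u(n))$.

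\emph{Surjectivity in (iii).} Given $\Phi\in C^\infty(M,\u(n))$, fix some HIF $F'\in\G$ via Theorem \ref{mainhif}. Inspecting mode $-1$ of $(X+\Phi)F'=0$ shows $\eta_-F'_0=0$, i.e.~$F'_0\in\O(M,GL(n,\C))$. The plan is to construct a first integral $G\in\G_0$ (fibrewise holomorphic, $GL(n,\C)$-valued, $XG=0$) with $G_0=(F'_0)^{-1}$; then $F:=F'G$ satisfies $XF=(XF')G+F'(XG)=-\Phi F'G=-\Phi F$, so $F$ is again an HIF for $\Phi$, while $F_0=F'_0(F'_0)^{-1}=\Id$. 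The skew-Hermiticity of $\Phi$ then places $F$ into $\sqrt{\G_0}$ by the computation in (i).

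The key step, and the principal obstacle, is constructing $G$. Via the twistor correspondence of Proposition \ref{tcA} this amounts to extending the holomorphic map $(F'_0)^{-1}:M\to GL(n,\C)$ to a holomorphic map $\tilde G:Z\to GL(n,\C)$. An entry-wise application of Corollary \ref{cartanex} yields a matrix-valued extension $\tilde G\in\O(Z,\C^{n\times n})$, but global invertibility is not automatic. To remedy this I would exploit the contractibility of the simple surface $M$ to write $\det(F'_0)^{-1}=e^{\varphi}$ for some $\varphi\in\O(M)$, extend $\varphi$ to $\tilde\varphi\in\O(Z)$ by Corollary \ref{cartanex}, and exponentiate to obtain a nowhere-vanishing holomorphic extension $e^{\tilde\varphi}\in\O(Z,\C^*)$ of the determinant. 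One then corrects $\tilde G$, using the Transport Oka--Grauert principle (Theorem \ref{maintog}) -- equivalently, the twisted cohomological vanishing of Corollary \ref{corco} -- to match this non-vanishing determinant and thereby produce the desired $GL(n,\C)$-valued extension $G$.
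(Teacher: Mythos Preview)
Your treatment of parts (i) and (ii), and of well-definedness in (iii), is correct and matches the paper's proof essentially line by line. The strategy you outline for surjectivity in (iii) --- take any HIF $F'$ for $\Phi$, observe that $\eta_-F'_0=0$, and right-multiply by some $G\in\G_0$ with $G_0=(F'_0)^{-1}$ --- is also exactly what the paper does. The difference lies in how the existence of such a $G$ is obtained: the paper simply cites an external result (Theorem~13.11.6 in \cite{PSU20}), while you attempt to derive it from Corollary~\ref{cartanex} and the Transport Oka--Grauert principle.

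That attempt has a genuine gap. Applying Corollary~\ref{cartanex} entrywise does give some $\tilde G\in\O(Z,\C^{n\times n})$ restricting to $(F'_0)^{-1}$ on $M$, and your separate logarithmic extension yields a nowhere-vanishing $e^{\tilde\varphi}\in\O(Z)$ restricting to $\det(F'_0)^{-1}$ on $M$. But these two extensions are unrelated off $M$: there is no mechanism here forcing $\det\tilde G$ to coincide with $e^{\tilde\varphi}$, and any naive correction factor such as $e^{\tilde\varphi}/\det\tilde G$ is only defined on the complement of the zero locus of $\det\tilde G$, which is precisely the set you are trying to show is empty. Your appeal to Theorem~\ref{maintog} or Corollary~\ref{corco} does not bridge this: the former is a statement about triviality of holomorphic bundles on $Z$ (equivalently, existence of global frames for a given flat $\bar\partial$-operator), and the latter is a linear $H^1$-vanishing; neither is, on its face, an extension theorem for $GL(n,\C)$-valued holomorphic maps from $M$ to $Z$. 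A full Oka principle for maps $Z\to GL(n,\C)$ would do the job, but that is stronger than what the paper proves. Absent a concrete reformulation of the extension problem as a bundle-triviality or $\bar\partial$-exactness statement, the final step of your argument is unsubstantiated, and you should either supply that reformulation or fall back on the cited result from \cite{PSU20}.
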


Part \ref{ionto1} holds in greater generality and is in fact equivalent to the assertion that $\M=0$ (assuming that $(M,g)$ is non-trapping and has a strictly convex boundary).

\begin{proof}
For \ref{ionto1} let $F\in \sqrt{\G_0}$, then 
\begin{equation}
0 = X(F^*F)=(X F^*) F + F^* (XF) = F^*\left((F^{-1})^* (XF^*) - \I^* (F) \right) F
\end{equation}
and hence 
$
\left( \I^* (F)\right)^* = \left( (F^{-1})^* (XF^*) \right)^* = -\I^*(F)
$, which shows that $\I^*$ indeed maps $\sqrt{\G_0}$ into $\mho(\u(n))$. To see that it is onto, let $\A\in \mho(\u(n))$ and let $F\in \G$ be an arbitrary HIF for $\A$. Then, as $\A$ is skew-Hermitian,
\begin{equation}
X (F^*F) = (-\A F)^* F + F^* XF= F^* (\A F+XF)=0
\end{equation}
which means that $F\in \sqrt{\G_0}$.

For \ref{ionto3} let $F\in \sqrt{\G_0}$ be even. Then $\I^*(F)$ is odd and, being skew-Hermitian, only has Fourier modes in degree $\pm 1$; hence $\I^*(F)\in \Omega^1(M,\u(n))$. Conversely, if $A$ is a $\u(n)$-connection, then by Proposition \ref{hifeven} there exists an even HIF $F\in \G$ and we must have $F\in \sqrt{\G_0}$, as above.

Finally, for \ref{ionto2}, let $F\in \sqrt{\G_0}$ with $F_0=\Id$. Then $\A = \I^*(F)$ satisfies
\begin{equation}\label{ionto4}
\A_{-1} = - (\eta_-F_0) (F^{-1})_0 = 0 
\end{equation}
and, $\A$ being skew-Hermitian, also $\A_1 = - \A_{-1}^*=0$. Hence $\A\in \Omega_0(SM,\u(n))\equiv C^\infty(M,\u(n))$. Conversely, given $\Phi \in C^\infty(M,\u(n))$ let $\tilde F\in \sqrt{\G_0}$ be any HIF. Then, similar to \eqref{ionto4} we see that $\eta_- \tilde F_0 = 0$ and as $\tilde F_0$ is invertible, also $\eta_-  \tilde F_0^{-1} = 0$. By Theorem 13.11.6 in \cite{PSU20} there exists $G\in \G_0$ with $G_0=\tilde F_0^{-1}$, hence $F=\tilde F G \in \sqrt{\G_0}$ is a new HIF for $\Phi$ with $F_0 = \tilde F_0 G_0 = \Id$, as desired.
\end{proof}

The domains in the preceding proposition can be projected onto the following boundary spaces of $\he^+_n$-valued functions:
\begin{align}
 C_\alpha^\infty(\partial_+SM,\he^+_n)&= \Big \{w\in C^\infty(\partial_+SM,\he^+_n):
 \begin{array}{l}
 A_+w \text{ smooth on } \partial SM
\end{array}
\Big \} \label{bspace1} \\
 C_{\alpha,1}^\infty(\partial_+SM,\he^+_n) &= \Big\{w\in C_\alpha^\infty(\partial_+SM,\he^+_n): ~\,w\circ\alpha_a = w \Big\} \label{bspace3}\\
 C_{\alpha,0}^\infty(\partial_+SM,\he^+_n)&=\left\{ w\in C_\alpha^\infty(\partial_+SM,\he^+_n):
 \begin{array}{l}
 w^\sharp =F^*F \text{ for some } \\  F\in \G \text{ with } F_0 =\Id 
 \end{array}
 \right\}
\label{bspace2}
\end{align}
Here
\begin{equation}\label{someas}
A_+w(x,v) =\begin{cases}
w(x,v)& (x,v)\in \partial_+SM\\
w\circ \alpha(x,v) & (x,v) \in \partial_-SM
\end{cases}
\quad \text{ and } \quad \alpha_a(x,v) = \alpha(x,-v),
\end{equation}
and $w^\sharp$ is the unique solution to $Xw^\sharp =0$ on $SM$ and $w^\sharp = w$ on $\partial_+ SM$. By a classical result of Pestov and Uhlmann (see \cite{PeUh04}  or Theorem 5.1.1 in \cite{PSU20}), the first integral $w^\sharp$ is smooth on $SM$ for $w\in C_\alpha^\infty(SM,\he_n^+)$.

Note that \eqref{bspace1} and \eqref{bspace3} define genuine boundary spaces in the sense that membership can be checked on $\partial SM$ only in terms of the scattering relation $\alpha$. To check whether a function $w$ belongs to $C_{\alpha,0}^\infty$ one first has to find the first integral $w^\sharp$. 

\smallskip

The following result is a consequence of Birkhoff's factorisation theorem for Hermitian matrices (Theorem \ref{thmfac}) and does {\it not} require $(M,g)$ to be simple.

\begin{proposition} \label{sigma} Let $\sigma(F)= F^*F\vert_{\partial_+SM}$, then: 
\begin{enumerate}[label=(\roman*)]
\item  \label{sigma1} $\sigma:\sqrt{\G_0} \rightarrow C_\alpha^\infty(\partial_+SM,\he^+_n)$ is surjective
\item \label{sigma3} $\sigma:\{F\in \sqrt{\G_0}: F~{\rm even}\} \rightarrow C_{\alpha,1}^\infty(\partial_+SM,\he^+_n)$ is surjective
\item \label{sigma2} $\sigma:\{F\in \sqrt{\G_0}: F_0=\Id\}\rightarrow C_{\alpha,0}^\infty(\partial_+SM,\he^+_n)$ is bijective
\end{enumerate}
\end{proposition}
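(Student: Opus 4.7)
The plan is to reduce all three parts to Theorem~\ref{thmfac}\ref{fac1} (the Birkhoff factorisation for Hermitian-valued fields on $SM$), using the Pestov-Uhlmann extension result cited below \eqref{someas} to pass from a boundary datum $w$ to its smooth first integral $w^\sharp\in C^\infty(SM,\he_n^+)$.

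For \ref{sigma1}, given $w\in C_\alpha^\infty(\partial_+SM,\he^+_n)$ I would form $w^\sharp$ and apply Theorem~\ref{thmfac}\ref{fac1} to get $F\in\G$ with $F^*F=w^\sharp$. Since $X(F^*F)=Xw^\sharp=0$, automatically $F\in\sqrt{\G_0}$, and $\sigma(F)=w$. For \ref{sigma3}, the key preliminary step is to identify $C_{\alpha,1}^\infty$ intrinsically as the set of $w$ whose first integral $w^\sharp$ is even. To check this I would use that the antipodal map $a(x,v)=(x,-v)$ satisfies $a_*X=-X$, so that $w^\sharp\circ a$ is again a first integral; evaluating on $\partial_+SM$,
\[
(w^\sharp\circ a)(x,v)=w^\sharp(x,-v)=w^\sharp(\alpha(x,-v))=(w\circ\alpha_a)(x,v),
\]
so $w^\sharp\circ a=w^\sharp$ (i.e.~$w^\sharp$ even) iff $w\circ\alpha_a=w$. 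With this equivalence in hand, the even case of Theorem~\ref{thmfac}\ref{fac1} yields an even $F\in\G$ with $F^*F=w^\sharp$, and one concludes as in \ref{sigma1}.

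For \ref{sigma2}, surjectivity is built into the very definition of $C_{\alpha,0}^\infty$: a representative $F\in\G$ with $F_0=\Id$ and $F^*F=w^\sharp$ is provided a priori, and such an $F$ automatically lies in $\sqrt{\G_0}$. For injectivity, suppose $F,G\in\sqrt{\G_0}$ both satisfy $F_0=G_0=\Id$ and $\sigma(F)=\sigma(G)$. Then $F^*F$ and $G^*G$ are first integrals of $X$ agreeing on $\partial_+SM$, so by the non-trapping hypothesis they coincide on all of $SM$. The uniqueness clause in Theorem~\ref{thmfac}\ref{fac1} gives $G=UF$ for some $U\in C^\infty(M,U(n))$; taking zeroth Fourier modes yields $\Id=U\cdot\Id$, so $U=\Id$ and $G=F$.

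The only genuine piece of computation is the antipodal-scattering bookkeeping in \ref{sigma3}; every other step is a direct application of the Birkhoff factorisation theorem combined with the Pestov-Uhlmann extension, so I expect no substantial obstacle.
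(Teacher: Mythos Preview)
Your proposal is correct and follows the same approach as the paper: pass from $w$ to the smooth first integral $w^\sharp$ via Pestov--Uhlmann, then apply Theorem~\ref{thmfac}\ref{fac1} (in its even variant for part~\ref{sigma3}), and use the uniqueness-up-to-$U(n)$-gauge clause for injectivity in part~\ref{sigma2}. The only cosmetic difference is that for \ref{sigma3} the paper cites \cite[Lemma~9.4.9]{PSU20} for the equivalence ``$w\circ\alpha_a=w \iff w^\sharp$ even'', whereas you supply the short direct argument via $a_*X=-X$; your version is self-contained and just as clean.
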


\begin{proof}
For \ref{sigma1} let $w\in C_\alpha^\infty(\partial_+SM,\he^+_n)$,  then $w^\sharp\in C^\infty(SM,\he^+_n)$ (it takes values in $\he^+_n$, as it is constant along the geodesic flow)
and by Theorem \ref{thmfac} there exists $F\in \G$ with $w^\sharp =F^*F$. We then automatically have $F\in \sqrt{\G_0}$. 

For \ref{sigma3} note that if $F\in \sqrt{\G_0}$ is even, then $F^*F\vert_{\partial SM}$ is $\alpha$-invariant and thus  $w=\sigma(F)$ satisfies
\begin{equation}
w(\alpha(x,-v)) = F^*F(x,-v)  = F^*F(x,v)  = w(x,v).
\end{equation}
Conversely if $w\in C_\alpha^\infty(\partial_+SM,\he^+_n)$ satisfies $w(\alpha(x,-v))=w(x,v)$, then $w^\sharp$ is even by Lemma 9.4.9 in \cite{PSU20}. Using Theorem \ref{thmfac}, this factors as $w^\sharp =F^*F$ for an even $F\in\G$. As above, we must have $F\in \sqrt{\G_0}$ and the proof is complete.

For \ref{sigma2}, surjectivity is clear. Here $\sigma$ is also injective, because $w^\sharp = F^*F = \tilde F^*\tilde F$ for two different $F,\tilde F\in \sqrt{\G_0}$ only if $F =  u \tilde F $ for $u\in C^\infty(M,U(n))$, while the requirement $F_0=\tilde F_0=\Id$ implies $u=\Id$ and thus $F=\tilde F$.
\end{proof}

By the preceding propositions, and if $(M,g)$ is simple, we obtain three diagrams which are analoguous to \eqref{magicdiagram} and \eqref{magicdiagram2}. The first of these is
\begin{equation}\label{magicdiagram3}
\begin{tikzcd}
\sqrt{\G_0} \arrow[two heads]{r}{\I^*} \arrow{dr}{B(\cdot\vert_{\partial SM})}\arrow[swap, two heads]{d}{\sigma} & \mho(\u(n)) \arrow{d}{\A\mapsto C_\A} \\
C_\alpha^\infty(\partial_+SM,\he^+_n) \arrow[dashed]{r}{P} & C^\infty(\partial_+SM,U(n)),
\end{tikzcd}
\end{equation}
with boundary operator $P$ yet to be defined. Here the upper triangle commutes as it arises from restricting \eqref{magicdiagram}; moreover, any choice of $P$ making the lower triangle commute would have the same range as $\mho(\u(n))\ni \A\mapsto C_\A$. However, commutativity can only be achieved up to unitary gauge, as we now explain. 

\smallskip

 Note that $C^\infty(M,U(n))$ acts on $\sqrt{\G_0}$ by left-multiplication and $C_\Id^\infty(\partial M,U(n))$ (subscript $\Id$ means: homotopic to $\Id$) acts on $C^\infty(\partial_+SM,GL(n,\C))$ by 
\begin{equation}\label{uaction}
h\lact q = h q (h^{-1}\circ\alpha).
\end{equation}
Then the diagonal arrow in \eqref{magicdiagram3} is equivariant with respect to these group actions, while $\sigma$ is invariant and in fact injective up the action of $C^\infty(M,U(n))$, i.e.
$
\sigma(F) = \sigma(F')$ if and only if $F'=UF$ for some $U\in C^\infty(M,U(n))$ (see Theorem \ref{thmfac}\ref{fac1}).

In terms of $\H^+$ from \eqref{hilbert+} we now define a boundary operator $P$ by
\begin{equation}\label{defP}
Pw := B\mathcal{H}^+A_+w,\quad w\in C_\alpha^\infty(\partial_+SM,\he^+_n).
\end{equation}

\begin{lemma}\label{gaugetriangle} With $P$ as in \eqref{defP} the lower triangle in \eqref{magicdiagram3} commutes up to gauge. That is, for any $F\in \sqrt{\G_0}$ there exists $h\in C_\Id^\infty(\partial M,U(n))$ such that
$
B(F\vert_{\partial SM}) = h\lact P(\sigma(F))).
$
\end{lemma}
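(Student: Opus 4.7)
The plan is to read off $h$ as the unitary gauge ambiguity in the boundary Birkhoff factorisation of $H := F^*F\vert_{\partial SM}$, reducing the lemma to unpacking definitions.

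First I would observe that $F \in \sqrt{\G_0}$ means $F^*F$ is a first integral of $X$, so $H$ is $\alpha$-invariant on $\partial SM$. Under this invariance the averaging operator $A_+$ merely recovers $H$ from $\sigma(F) = H\vert_{\partial_+SM}$, hence $A_+\sigma(F) = H$ and consequently
\begin{equation*}
P(\sigma(F)) = B\bigl(\H^+(H)\bigr).
\end{equation*}

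Second, I would compare two Hermitian Birkhoff factorisations of $H$ on $\partial SM$. On the one hand $g := \H^+(H) \in \HH$ satisfies $g^*g = H$ by construction. On the other hand $F\vert_{\partial SM}$ already lies in $\HH$ (since $F\in\G$ is fibrewise holomorphic and extends to all of $SM$) and evidently satisfies $(F\vert_{\partial SM})^*(F\vert_{\partial SM}) = H$. The uniqueness statement for \eqref{bfac1} in Remark \ref{boundaryfac} then delivers a unique gauge $h \in C_\Id^\infty(\partial M, U(n))$ with $F\vert_{\partial SM} = h \cdot g$; this is the gauge asserted by the lemma.

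Finally, substituting $F\vert_{\partial SM} = hg$ into the definition of $B$ and using that $h$, being pulled back from $\partial M$, commutes through the restriction to $\partial_+SM$, I would obtain the one-line identity
\begin{equation*}
B(F\vert_{\partial SM}) = h g\bigl(g^{-1}h^{-1}\circ\alpha\bigr)\big\vert_{\partial_+SM} = h\cdot B(g)\cdot (h^{-1}\circ\alpha) = h \lact P(\sigma(F)),
\end{equation*}
which is the desired relation. I do not foresee any genuine obstacle: the entire argument rests on Remark \ref{boundaryfac}, which already encodes the uniqueness of Hermitian boundary Birkhoff factorisations up to a gauge in $C_\Id^\infty(\partial M, U(n))$.
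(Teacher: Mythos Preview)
Your proposal is correct and follows essentially the same route as the paper's proof: both compare the two boundary Birkhoff factorisations $H=g^*g$ (with $g=\H^+(A_+\sigma(F))$) and $H=(F\vert_{\partial SM})^*(F\vert_{\partial SM})$, then invoke the uniqueness clause of Remark~\ref{boundaryfac} to extract the gauge $h$ and conclude via the equivariance of $B$. The only cosmetic difference is that you spell out explicitly why $A_+\sigma(F)=F^*F\vert_{\partial SM}$ and write the gauge as $F\vert_{\partial SM}=hg$ rather than $g=hF\vert_{\partial SM}$, which merely swaps $h$ for $h^{-1}$.
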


\begin{proof}
Let $F\in \sqrt{\G_0}$ and put $w=\sigma(F)$. Then, by definition of $\H^+$, we have  $A_+w = f^* f$, where $f=\H^+(A_+w)$. On the other hand, $A_+w = F^*F\vert_{\partial SM}$ is another such decomposition and thus $f=hF$ for some $h\in C_\Id^\infty(\partial M,U(n))$. Hence $Pw=B(hF\vert_{
\partial SM})=h \lact B(F\vert_{\partial SM})$, as desired.
\end{proof}

\begin{theorem}[Range for $\u(n)$-pairs]\label{rangepairs}
Suppose that $(M,g)$ is simple (or more generally, that $\M=0$). Then an element $q\in C^\infty(\partial_+SM,U(n))$ lies in the range of $\mho(\u(n))\ni (A,\Phi)\rightarrow C_{A,\Phi}$ if and only if 
\begin{equation}\label{rangeua}
\hspace{3em} q=h\lact Pw, \quad \text{for some } (w,h)\in C_\alpha^\infty(\partial_+SM,\he^+_n) \times C^\infty_\Id(\partial M,U(n)).
\end{equation}
\end{theorem}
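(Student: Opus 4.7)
The plan is a straightforward diagram chase in \eqref{magicdiagram3}, combining Propositions \ref{ionto}\ref{ionto1} and \ref{sigma}\ref{sigma1} with Lemma \ref{gaugetriangle}. All the analytic substance has been done in advance: Proposition \ref{ionto}\ref{ionto1} hinges on Theorem \ref{mainhif} (equivalently $\M=0$), Proposition \ref{sigma}\ref{sigma1} rests on Birkhoff factorisation applied to the smooth first integral $w^\sharp$ provided by the Pestov--Uhlmann result, and Lemma \ref{gaugetriangle} records how $B(F\vert_{\partial SM})$ and $P(\sigma(F))$ differ by a unitary boundary gauge.

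For necessity, suppose $q = C_{A,\Phi}$ for some $\u(n)$-pair $(A,\Phi) \in \mho(\u(n))$. Proposition \ref{ionto}\ref{ionto1} lifts $(A,\Phi)$ to some $F \in \sqrt{\G_0}$ with $\I^*(F) = (A,\Phi)$; commutativity of the upper triangle of \eqref{magicdiagram3} (inherited from \eqref{magicdiagram}) then gives $q = B(F\vert_{\partial SM})$; and Lemma \ref{gaugetriangle} rewrites this as $q = h \lact Pw$ with $w := \sigma(F) \in C_\alpha^\infty(\partial_+SM, \he^+_n)$ and some $h \in C_\Id^\infty(\partial M, U(n))$.

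For sufficiency, given $w$ and $h$ as in \eqref{rangeua}, I would first use Proposition \ref{sigma}\ref{sigma1} to produce $F \in \sqrt{\G_0}$ with $\sigma(F) = w$, and then invoke Lemma \ref{gaugetriangle} to write $B(F\vert_{\partial SM}) = h' \lact Pw$ for some $h' \in C_\Id^\infty(\partial M, U(n))$. The only remaining issue is to absorb the gauge discrepancy $h h'^{-1}$. This is where the contractibility assumption on $h$ enters: since both $h$ and $h'$ are homotopic to $\Id$, so is $hh'^{-1}$, and because $M$ is diffeomorphic to a disk this null-homotopic boundary map extends to a smooth $u \in C^\infty(M, U(n))$. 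Then $uF$ remains in $\sqrt{\G_0}$ (as $(uF)^*(uF) = F^*F$), and a short computation yields
\begin{equation*}
B(uF\vert_{\partial SM}) = u \lact B(F\vert_{\partial SM}) = (uh') \lact Pw = h \lact Pw = q,
\end{equation*}
so setting $(A,\Phi) := \I^*(uF) \in \mho(\u(n))$ produces a $\u(n)$-pair with scattering data $C_{A,\Phi} = q$.

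I do not expect any serious obstacle here; the argument is largely formal once the preceding propositions and lemma are established. The only non-trivial point beyond diagram chasing is the extension of $hh'^{-1}$ across $M$, which is precisely why the statement is phrased in terms of \emph{contractible} boundary gauges.
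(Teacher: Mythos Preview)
Your proposal is correct and matches the paper's proof almost verbatim: both directions are the same diagram chase in \eqref{magicdiagram3}, using Proposition~\ref{ionto}\ref{ionto1}, Proposition~\ref{sigma}\ref{sigma1}, and Lemma~\ref{gaugetriangle}, with the gauge discrepancy absorbed by extending the contractible map $hh'^{-1}$ across $M$ and left-multiplying $F$. The only cosmetic difference is that the paper extends $h$ and $h'$ separately rather than their product, but this is immaterial.
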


\begin{proof}
The proof is essentially a diagram chase in \eqref{magicdiagram3}. First suppose that $q=C_\A$ for some $\A\in\mho(\u(n))$. By Proposition \ref{ionto} (valid also if $\M=0$) there exists  $F\in \sqrt{\G_0}$ with  $\I^*(F)=\A$ and consequently, using Lemma \ref{gaugetriangle}, we have $q = B(F\vert_{\partial SM})= h\lact P(w)$ for $w= \sigma(F)$ and some $h\in C^\infty_\Id(\partial M,U(n))$. For the other direction suppose that $q=h\lact P(w)$ for $(w,h)$ as in \eqref{rangeua}. By Proposition \ref{sigma} we have $w=\sigma (F)$ for some $F\in \sqrt{\G_0}$ and by Lemma \ref{gaugetriangle}, we have $B(F\vert_{\partial SM}) = h_1\lact Pw$ for some $h_1\in C^\infty_\Id(\partial M,U(n))$. We may extend both $h$ and $h_1$ to functions in $C^\infty(M,U(n))$ (denoted by the same symbol) and set $
\A =  \I^*(hh_1^{-1} F),
$
such that
\begin{equation}
C_\A = (hh_1^{-1})\lact B(F\vert_{\partial SM}) = h\lact Pw= q.
\end{equation}
This completes the proof.
\end{proof}

\begin{remark}\label{equivalenceoftheorems}
In fact, on simple surfaces the range characterisation in the preceding theorem is {\it equivalent} to the assertion that $\M=0$ in the following sense: If the scattering data of a  $\u(n)$-pair $\A=(A,\Phi)$ is of the form \eqref{rangeua}, then $\A$ automatically admits holomorphic integrating factors.  To see this, let $(w,h)$ be as in \eqref{rangeua}, extend $h$ to a function in $C^\infty(M,U(n))$ and factor $w^\sharp = G^*G$ for some $G\in \G$. Then also $F:=hG$ lies in $\G$ and $\B:=-(XF)F^{-1}$ is a $\u(n)$-pair with the same scattering data as $\A$. By Theorem \ref{PSU}, the attenuation $\A$ is gauge equivalent to $\B$ and thus it admits holomorphic integrating factors.
Granted a characterisation as in the theorem, one thus obtains HIF's for $\u(n)$-pairs and as every orbit in $\M\equiv \mho/\G$ contains such a pair \cite[Lemma 5.2]{PaSa20}, it holds that $\M=0$.
\end{remark}

Similarly, with $\alpha_a$ as in \eqref{someas}, we obtain:

\begin{theorem}[Range for $\u(n)$-connections]\label{rangeconnection}
Suppose that $(M,g)$ is simple. Then an element $q\in C^\infty(\partial_+SM,U(n))$ lies in the range of $\Omega^1(M,\u(n))\ni A \mapsto C_A$ if and only if one of the following equivalent conditions is satisfied:
\begin{enumerate}[label=(\roman*)]
\item \label{rangeconn1} $q$ satisfies \eqref{rangeua} and additionally $q\circ\alpha_a=q^{-1}$. 
\item \label{rangeconn2} $q$ satisfies \eqref{rangeua} with $w\in C_{\alpha,1}^\infty(\partial_+SM,\he^+_n)$ (i.e.~$w\circ\alpha_a=w$).
\end{enumerate}
\end{theorem}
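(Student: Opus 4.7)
My plan is to mirror the proof of Theorem \ref{rangepairs} by restricting diagram \eqref{magicdiagram3} to the connection setting, and then to close the loop $\mathrm{(i)} \Leftrightarrow \mathrm{range} \Leftrightarrow \mathrm{(ii)}$ via the reversibility identity for $\u(n)$-pairs. The relevant diagram is
\begin{equation*}
\begin{tikzcd}
\{F \in \sqrt{\G_0} : F \text{ even}\} \arrow[two heads]{r}{\I^*} \arrow[swap, two heads]{d}{\sigma} \arrow{dr}{B(\,\cdot\,|_{\partial SM})} & \Omega^1(M, \u(n)) \arrow{d}{A \mapsto C_A} \\
C_{\alpha,1}^\infty(\partial_+SM, \he^+_n) \arrow{r}{P} & C^\infty(\partial_+SM, U(n)).
\end{tikzcd}
\end{equation*}
The top horizontal and left vertical arrows are surjective by Propositions \ref{ionto}(ii) and \ref{sigma}(ii) respectively, and the lower triangle commutes up to unitary gauge by (the argument of) Lemma \ref{gaugetriangle}. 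The same diagram chase as in Theorem \ref{rangepairs} then identifies membership of $q$ in the range of $A \mapsto C_A$ with condition (ii).

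For $\mathrm{(ii)} \Rightarrow \mathrm{(i)}$ I would argue directly. By Proposition \ref{sigma}(ii) choose an even $F \in \sqrt{\G_0}$ with $\sigma(F) = w$, and using Lemma \ref{gaugetriangle} write $q = H \lact B(F|_{\partial SM})$ for some $H \in C^\infty(M, U(n))$. Setting $(y, \xi) = \alpha(x,v)$, so that $\alpha_a(x,v) = (y, -\xi)$, the evenness of $F$ yields
\begin{equation*}
B(F|_{\partial SM})(y,-\xi) = F(y,-\xi) F(x,-v)^{-1} = F(y,\xi) F(x,v)^{-1} = B(F|_{\partial SM})(x,v)^{-1},
\end{equation*}
and $q \circ \alpha_a = q^{-1}$ descends at once.

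For $\mathrm{(i)} \Rightarrow \mathrm{range}$ the crucial input is the reversibility identity $C_{A,\Phi} \circ \alpha_a = C_{A,-\Phi}^{-1}$ for every $\u(n)$-pair, which one verifies by a direct ODE computation along the reversed geodesic (the sign flip on $A$ arises because $A$ is odd in $v$ whereas $\Phi$ is not). Assuming (i), Theorem \ref{rangepairs} produces a $\u(n)$-pair with $q = C_{A,\Phi}$; the hypothesis $q \circ \alpha_a = q^{-1}$ then forces $C_{A,\Phi} = C_{A,-\Phi}$, and by Theorem \ref{PSU} there is $u \in C^\infty(M,U(n))$ with $u|_{\partial M} = \Id$ and $(A,\Phi) \act u = (A,-\Phi)$. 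The connection part of this equation reads $du + [A,u] = 0$, i.e.\ $u$ is a parallel section of the trivial $\End(\mathbb{C}^n)$-bundle for the adjoint connection $D = d + [A, \cdot]$; since the constant section $\Id$ is itself parallel ($D\,\Id = [A,\Id] = 0$) and agrees with $u$ on $\partial M$, the uniqueness of parallel transport on the connected manifold $M$ forces $u \equiv \Id$, and the surviving equation $u^{-1}\Phi u = -\Phi$ collapses to $\Phi = 0$. Thus $q = C_A$ lies in the range of connections. The main obstacle I anticipate is verifying the reversibility identity $C_{A,\Phi} \circ \alpha_a = C_{A,-\Phi}^{-1}$ carefully; once in place, the rest of the argument is a diagram chase together with the short rigidity step that leverages $M$ being diffeomorphic to a disk (in particular connected) to conclude that the parallel lift of $\Id$ is $\Id$ throughout.
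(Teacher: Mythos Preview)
Your proposal is correct and follows essentially the same approach as the paper. Both arguments establish (ii) via the diagram chase from Propositions \ref{ionto}(ii) and \ref{sigma}(ii), and both handle (i) via the reversibility identity together with Theorem \ref{PSU} and the observation that a parallel gauge equal to $\Id$ on $\partial M$ must be $\Id$ throughout; the paper derives the reversibility identity in the slightly more general form $C_\A\circ\alpha_a = C_\B^{-1}$ with $\B(x,v)=-\A(x,-v)$ (from which your $C_{A,\Phi}\circ\alpha_a = C_{A,-\Phi}^{-1}$ follows), and organises the implications as range$\Leftrightarrow$(i) and range$\Leftrightarrow$(ii) separately, whereas you go (ii)$\Leftrightarrow$range, (ii)$\Rightarrow$(i), (i)$\Rightarrow$range --- a purely cosmetic difference.
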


\begin{proof}
To prove characterisation \ref{rangeconn1}, we first consider an arbitrary attenuation $\A \in C^\infty(SM,\C^{n\times n})$ with integrating factor $R\in C^\infty(SM,\C^{n \times n})$. Then  $S(x,v) = R(x,-v)$ defines an integrating factor for $\B(x,v)=-\A(x,-v)$ and we have
\begin{equation}
C_\A\circ \alpha_a = (R\circ \alpha_a) (R^{-1}\circ\alpha\circ\alpha_a) = \left[S(S^{-1} \circ \alpha) \right]^{-1} = C_\B^{-1}\quad \text{ on } \partial_+SM.
\end{equation}
Now, if $q=C_\A$ for $\A$ equal to a connection $A\in \Omega^1(M,\u(n))$, then also $\B=A$ and thus $q$ has the desired symmetry. Conversely, if $q=C_\A$  for a $\u(n)$-pair $\A=(A,\Phi)$ and additionally  $q\circ\alpha_a = q^{-1}$, then the previous display implies $C_\A = C_\B$ and by  Theorem \ref{PSU} there is a 
gauge $\varphi \in C^\infty(M,U(n))$ with $\varphi=\Id$
on $\partial M$ such that
\begin{equation}
\varphi \Phi + \Phi\varphi =0\quad \text{and} \quad \d \varphi + [A,\varphi] =0 \quad \text{ on } M.
\end{equation}
By the second equation $\varphi$ solves an ODE along every curve in $M$ and thus it is determined by its boundary values. It follows that $\varphi \equiv \Id$ and hence $\Phi=0$.

The characterisation in \ref{rangeconn2} follows by the same arguments that lead to Theorem \ref{rangepairs}, replacing diagram \eqref{magicdiagram3} with the obvious analogue containing the spaces $\{F\in\sqrt{\G_0}: F\text{ even}\}$ and $C^\infty_{\alpha,1}(\partial_+SM,\he^+_n)$. This completes the proof.
\end{proof}

Next, we consider the range of  $\Phi\mapsto C_\Phi$ for $\u(n)$-valued matrix fields. In this case one defines a boundary operator in terms of the transform 
$\H^{+,0}$ from \eqref{hilbert+0}:
\begin{equation}
P_0 w : = B\H^{+,0} A_+ w,\quad w\in C_{\alpha,0}^\infty(\partial_+SM,\he^+_n).
\end{equation}
Using Propositions \ref{ionto} and \ref{sigma}, one obtains a similar diagram as in \eqref{magicdiagram3}, this time commutative, as $\sigma$ is bijective in this setting. We obtain the following result, omitting the proof:

\begin{theorem}[Range for $\u(n)$-matrix fields] \label{rangefields} Suppose that $(M,g)$ is simple. Then an element $q\in C^\infty(\partial_+SM,U(n))$ lies in the range of $C^\infty(M,\u(n))\ni \Phi \mapsto C_\Phi$ if and only if
$
q = P_0w$ for some  $w\in C_{\alpha,0}^\infty(\partial_+SM,\he^+_n)$. \qed
\end{theorem}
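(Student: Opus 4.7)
The plan is to mirror the proof of Theorem \ref{rangepairs}, exploiting the crucial upgrade that Proposition \ref{sigma}\ref{sigma2} provides: in the matrix-field setting, $\sigma$ is a \emph{bijection}, not merely a surjection. This removes the unitary gauge ambiguity encoded in Lemma \ref{gaugetriangle}, so the gauge factor $h$ appearing in Theorem \ref{rangepairs} disappears entirely from the characterisation.

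Concretely, I would assemble the matrix-field analogue of diagram \eqref{magicdiagram3},
\begin{equation*}
\begin{tikzcd}
\{F\in\sqrt{\G_0}: F_0=\Id\} \arrow[two heads]{r}{\I^*} \arrow{dr}{B(\cdot\vert_{\partial SM})}\arrow[swap]{d}{\sigma} & C^\infty(M,\u(n)) \arrow{d}{\Phi\mapsto C_\Phi} \\
C_{\alpha,0}^\infty(\partial_+SM,\he^+_n) \arrow{r}{P_0} & C^\infty(\partial_+SM,U(n)),
\end{tikzcd}
\end{equation*}
where surjectivity of the top arrow is Proposition \ref{ionto}\ref{ionto2}, bijectivity of $\sigma$ is Proposition \ref{sigma}\ref{sigma2}, and the upper triangle commutes as a restriction of \eqref{magicdiagram}.

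The single thing to verify is that the lower triangle commutes, i.e.~that $P_0\sigma(F)=B(F\vert_{\partial SM})$ for every $F$ in the top-left corner. Given such an $F$, set $w=\sigma(F)=F^*F\vert_{\partial_+SM}$. Since $F\in\sqrt{\G_0}$, the first integral $w^\sharp$ equals $F^*F$ on all of $SM$, hence $A_+w=F^*F\vert_{\partial SM}=(F\vert_{\partial SM})^*(F\vert_{\partial SM})$ is a fibrewise Birkhoff factorisation of the form \eqref{bfac1} in which the $\mathbb{H}$-factor $F\vert_{\partial SM}$ has zero Fourier mode $\Id$. By the uniqueness built into the definition of $\H^{+,0}$ in \eqref{hilbert+0}, this forces $\H^{+,0}(A_+w)=F\vert_{\partial SM}$; applying $B$ gives $P_0w=B(F\vert_{\partial SM})$, which by commutativity of \eqref{magicdiagram} equals $C_{\I^*(F)}$, as required.

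With commutativity established, the two directions of the theorem are one-line diagram chases. If $q=C_\Phi$ for some $\Phi\in C^\infty(M,\u(n))$, then Proposition \ref{ionto}\ref{ionto2} supplies $F$ with $F_0=\Id$ and $\I^*(F)=\Phi$, whence $q=P_0\sigma(F)$ lies in the range of $P_0$. Conversely, if $q=P_0w$, set $F=\sigma^{-1}(w)$ and $\Phi=\I^*(F)\in C^\infty(M,\u(n))$, and read off $C_\Phi=q$ from the diagram. I foresee no genuine obstacle: all the non-trivial input (existence of HIFs with prescribed zero Fourier mode via Proposition \ref{hifeven} together with Theorem 13.11.6 of \cite{PSU20}, and Birkhoff factorisation of Hermitian first integrals via Theorem \ref{thmfac}) has already been absorbed into Propositions \ref{ionto} and \ref{sigma}, and what remains is pure bookkeeping.
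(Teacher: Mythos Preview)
Your proposal is correct and follows precisely the approach the paper sketches: build the analogue of diagram \eqref{magicdiagram3} with $\{F\in\sqrt{\G_0}:F_0=\Id\}$ and $C_{\alpha,0}^\infty$, observe that bijectivity of $\sigma$ (Proposition \ref{sigma}\ref{sigma2}) makes the lower triangle genuinely commutative rather than only up to gauge, and then diagram-chase. One trivial slip in your closing parenthetical: the relevant input absorbed into Proposition \ref{ionto}\ref{ionto2} is Theorem \ref{mainhif} (existence of an arbitrary HIF) together with Theorem 13.11.6 of \cite{PSU20}, not Proposition \ref{hifeven}, which concerns even HIFs and is used for connections rather than matrix fields.
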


\begin{remark} \label{alternative1} An alternative characterisation for $\u(n)$-pairs is obtained as follows: Define
$
\G_U = \{(U,F)\in C^\infty(SM,U(n))\times \G: X(UF)=0\}
$ and consider the diagram
\begin{equation}\label{magicdiagram4}
\begin{tikzcd}
\G_U\arrow[two heads]{r}{(U,F)\mapsto \I^*(F)} \arrow{dr}{(U,F)\mapsto B(F\vert_{\partial SM})}\arrow[swap, two heads]{d}{(U,F)\mapsto UF\vert_{\partial_+SM}} & \mho(\u(n)) \arrow{d}{\A\mapsto C_\A} \\
C_\alpha^\infty(\partial_+SM,GL(n,\C)) \arrow{r}{B\H A_+ } & C^\infty(\partial_+SM,U(n)),
\end{tikzcd}
\end{equation}
where $\H$ is as in \eqref{hilbert1}.
As above, one proves commutativity up to gauge and -- assuming that $(M,g)$ is simple -- one establishes surjectivity results as indicated by the double headed arrows. This shows that condition \eqref{rangeua} in Theorem \ref{rangepairs} can be replaced by
\begin{equation*}
\hspace{-1em} q=h\lact (B\H A_+w) \quad \text{for some } (w,h)\in C_\alpha^\infty(\partial_+SM,GL(n,\C)) \times C^\infty_\Id(\partial M,U(n)).
\end{equation*}
In order to isolate connections and matrix fields one can use the same ideas that lead to Theorems \ref{rangeconnection} and \ref{rangefields}; we leave the details to the reader.
\end{remark}

\subsection{Range for $\gl(n,\C)$-attenuations}
 To obtain range characterisations in the  $\gl(n,\C)$-case, define $\G_G=\{(G,F)\in \G^*\times \G: G_0=\Id\}$ and consider the diagram 
\begin{equation}\label{magicdiagram5}
\begin{tikzcd}
\G_G\arrow[two heads]{r}{(G,F)\mapsto \I^*(F)} \arrow{dr}\arrow[swap, two heads]{d}{(G,F)\mapsto GF\vert_{\partial_+SM}} & \{\gl(n,\C)\text{-pairs}\} \arrow{d}{\A\mapsto C_\A} \\
C_{\alpha,\Delta}^\infty \arrow{r}{P^\Delta} & C^\infty(\partial_+SM,GL(n)),
\end{tikzcd}
\end{equation}
where $P^\Delta = B\H^\Delta A_+$ with transform $\H^\Delta$ as in \eqref{hilbertjumping}
and with diagonal arrow equal to $(G,F)\mapsto B(F\vert_{\partial SM})$. Here $C_{\alpha,\Delta}^\infty\subset C_{\alpha}^\infty(\partial_+SM,GL(n,\C))$ is defined -- somewhat tautologically -- as range of the left vertical map and commutativity holds up to a $GL(n,\C)$-valued gauge. If $(M,g)$ is simple, the top arrow is surjective and similar to Theorem \ref{rangepairs}, the following result holds true (the proof is omitted):
\begin{theorem}[Range for $\gl(n,\C)$-pairs]\label{rangeglpairs} Suppose that $(M,g)$ is simple (or more generally, that $\M =0$). Then an element $q\in C^\infty(\partial_+SM,GL(n,\C))$ lies in the range of $\{\gl(n,\C)\text{-pairs}\}\ni (A,\Phi)\mapsto C_{A,\Phi}$ iff
\begin{equation*} \pushQED{\qed} 
q=h\lact P^\Delta w\quad
\text{ for some } (w,h)\in C_{\alpha,\Delta}^\infty \times C^\infty_\Id(\partial M,GL(n,\C)). \qedhere \popQED
\end{equation*}
\end{theorem}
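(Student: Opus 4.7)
The plan is to carry over the proof of Theorem \ref{rangepairs} verbatim, replacing \eqref{magicdiagram3} by \eqref{magicdiagram5}. The ingredients needed are (i) surjectivity of the top and left-vertical arrows, (ii) commutativity of the diagram up to a $GL(n,\C)$-valued boundary gauge, and (iii) a diagram chase.

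For (i), given a $\gl(n,\C)$-pair $\A=(A,\Phi)\in\mho$, Theorem \ref{mainhif} produces $F\in\G$ with $\I^*(F)=\A$, and then $(\Id,F)\in\G_G$ maps to $\A$; the left-vertical arrow is surjective tautologically, since $C^\infty_{\alpha,\Delta}$ is defined as its image. For (ii), the upper triangle closes exactly via the identity $B(F|_{\partial SM})=C_{\I^*(F)}$ proved below \eqref{magicdiagram}. For the lower triangle, let $(G,F)\in\G_G$, set $w=GF|_{\partial_+SM}$ and $\tilde w=A_+w\in C^\infty_{\Delta}$. Since $G|_{\partial SM}^*\in \HH$ with trivial zeroth Fourier mode and $F|_{\partial SM}\in\HH$, the product $GF|_{\partial SM}$ is an admissible decomposition in the sense of \eqref{hilbertjumping}; the $\alpha$-extension $\tilde w$ admits another such decomposition, and the uniqueness in the boundary form of Theorem \ref{thmfac}\ref{fac2} (cf.~Remark~\ref{boundaryfac}) forces $\H^\Delta(\tilde w)=h_0\cdot F|_{\partial SM}$ for some $h_0\in C_\Id^\infty(\partial M,GL(n,\C))$. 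Here $h_0$ accounts for the discrepancy between $\tilde w$ and $GF|_{\partial SM}$ on $\partial_-SM$. Applying $B$ yields $P^\Delta w=h_0\lact B(F|_{\partial SM})$, which is the claimed commutativity.

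The chase now proceeds as for Theorem \ref{rangepairs}. If $q=C_{A,\Phi}$, then taking $G=\Id$ and $F\in\G$ with $\I^*(F)=(A,\Phi)$ gives $q=B(F|_{\partial SM})=h_0^{-1}\lact P^\Delta(F|_{\partial_+SM})$. Conversely, given $q=h\lact P^\Delta w$ with $w=GF|_{\partial_+SM}$, set $\tilde h=hh_0\in C^\infty_\Id(\partial M,GL(n,\C))$, extend it to a smooth $GL(n,\C)$-valued map on $M$, and form $\A'=\I^*(\tilde hF)\in\mho$, so that $C_{\A'}=\tilde h\lact B(F|_{\partial SM})=q$. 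By Lemma~5.2 of \cite{PaSa20}, the $\G$-orbit of $\A'$ contains a $\gl(n,\C)$-pair $\B$, and the $GL(n,\C)$-version of Theorem~\ref{PSU} lets us modify the orbit representative by an inner gauge fixing $\partial M$ so as to arrange $C_\B=q$. The main technical obstacle is the verification of (ii)---specifically, tracking the gauge $h_0$ that arises from the interaction between the normalisations built into $\H^\Delta$ and the $\alpha$-extension $A_+$; the rest is a mechanical adaptation of the $\u(n)$-case.
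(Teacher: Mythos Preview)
Your overall template is right, but the argument has a real gap stemming from a missing constraint in the definition of $\G_G$: despite the paper's terse write-up, the diagram \eqref{magicdiagram5} only makes sense if $\G_G$ consists of pairs $(G,F)\in\G^*\times\G$ with $G_0=\Id$ \emph{and} $X(GF)=0$. Without the first-integral condition, the top arrow $\I^*(F)$ does not land in $\gl(n,\C)$-pairs, and $GF|_{\partial_+SM}$ need not lie in $C^\infty_\alpha$ (so $P^\Delta$ is not even defined on it). Your choice $(\Id,F)$ violates this constraint unless $XF=0$, so your surjectivity argument for the top arrow is incorrect. What is actually needed is to pair a holomorphic integrating factor $F$ for $\A$ with an \emph{anti}-holomorphic one $K\in\G^*$ (obtained from Theorem~\ref{mainhif} applied to $-\A^*$); then $G'=K^{-1}$ gives $X(G'F)=0$, and normalising via $(G,\tilde F)=(G'(G'_0)^{-1},\,G'_0F)$ puts the pair in $\G_G$. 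This normalisation is precisely where the boundary gauge $h=(G'_0|_{\partial M})^{-1}$ enters---not, as you suggest, from a ``discrepancy between $\tilde w$ and $GF|_{\partial SM}$ on $\partial_-SM$'': once $X(GF)=0$, one has $A_+w=GF|_{\partial SM}$ exactly and $\H^\Delta$ returns $\tilde F|_{\partial SM}$ on the nose by uniqueness of the Birkhoff factorisation.

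This also simplifies the backward direction. With the constraint in place, $\I^*(F)$ is already a $\gl(n,\C)$-pair (it lies in $\mho\cap\oplus_{|k|\le 1}\Omega_k$ because $X(GF)=0$ forces $\I^*(F)=G^{-1}XG\in\oplus_{k\le 1}\Omega_k$), and hence so is $\I^*(\tilde hF)$ for any extension $\tilde h\in C^\infty(M,GL(n,\C))$. Your detour through Lemma~5.2 of \cite{PaSa20} and Theorem~\ref{PSU} is therefore unnecessary---and in fact it does not work as stated: moving to another representative in the $\G$-orbit changes the scattering data, so you cannot ``arrange $C_\B=q$'' that way.
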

Similar to above one can isolate connections and matrix fields; we omit the details.

\smallskip

The space $C_{\alpha,\Delta}^\infty$ can also be described as follows: Any $w\in C^\infty_\alpha(\partial SM,GL(n,\C))$ extends to a first integral $w^\sharp:SM\rightarrow GL(n,\C)$ which, by virtue of \cite[Theorem 8.1.2]{PS86},  admits a Birkhoff factorisation  as
\begin{equation}
w^\sharp(x,\cdot) = G(x,\cdot)\Delta(x,\cdot)F(x,\cdot).
\end{equation}
Here $F(x,\cdot)$ and $G(x,\cdot)^*$ are fibrewise holomorphic and $\Delta(x,\theta)=\mathrm{diag}\left(e^{ia_1(x)\theta}, \dots,\right.$ $\left. e^{ia_n(x)\theta} \right)$ for not necessarily continuous maps $a_i:M\rightarrow \Z$ ($i=1,\dots,n$). We then have $w\in C_{\alpha,\Delta}^\infty$ if and only if $\Delta \equiv \Id$, in which case $F$ and $G$ are automatically smooth on $SM$. To check membership of $w$ in $C_{\alpha,\Delta}^\infty$ it is thus necessary that $\Delta\vert_{\partial_+SM}\equiv \Id$; it is an interesting question whether this also sufficient or more generally, whether discontinuities of $\Delta$ -- also called {\it jumping lines} -- can be detected at the boundary.

\subsection{Nontrivial $\mathfrak{M}$} Finally, we give a range characterisation if $\M\neq 0$. In this case the range of $\{\u(n)\text{-pairs}\} \ni (A,\Phi) \mapsto C_{A,\Phi}$ is parametrised in terms of {\it both} solitonic and radiative/dispersive degrees of freedom -- as discussed below Theorem \ref{mainrange} -- in the following sense: We construct 
 a `boundary space' $\BB(\partial_+SM)$ which fits into a short exact sequence (of pointed sets)
\begin{equation}\label{ses}
0\rightarrow C_\alpha^\infty(\partial_+SM,GL(n,\C)) \rightarrow \BB(\partial_+SM) \rightarrow \M \rightarrow 0,
\end{equation}
and is the natural domain of a `boundary operator'
\begin{equation}
\mathcal{P}:\BB(\partial_+SM) \rightarrow  \U\backslash C^\infty(\partial_+ SM,U(n)).
\end{equation}
Here the right hand side denotes the quotient by $\U:=C_\Id^\infty(\partial M,U(n))$ under the action defined in \eqref{uaction}. 
\smallskip

In order to define the boundary space $\BB(\partial_+SM)$, denote
$\S_\A^\infty(\partial_+SM,GL(n,\C))$ the space  of functions $w:\partial_+SM\rightarrow GL(n,
C)$ for which the extension
\begin{equation}
E_\A w(x,v) = \begin{cases}
w(x,v) & (x,v)\in \partial_+SM\\
C_\A^{-1} w(\alpha(x,v)) & (x,v)\in \partial_-SM
\end{cases}
\end{equation}
defines a smooth function on $\partial SM$. Next,  denote with $\S(\partial_+SM)$ the subset of $\mho \times C^\infty(\partial_+SM,GL(n,\C))$ consisting of pairs $(\A,w)$ with  $w\in \S_\A^\infty(\partial_+SM,\C^{n\times n})$. Then $\G$ acts on $\S(\partial_+SM)$ via
$
(\A,w)\act F=(\A\act F,(F^{-1}\vert_{\partial_+SM})w)
$
and we define
\begin{equation}
\BB(\partial_+SM):=\S(\partial_+SM)/\G.
\end{equation}
The arrows in \eqref{ses} are given by $w\mapsto [(0,w)]$ and $[(\A,w)]\mapsto [\A]$ respectively (exactness is obvious). 
The boundary operator $P$ is defined as concatenation
\begin{equation*}
\hspace{-.5em} \BB(\partial_+SM) \rightarrow \G \backslash C^\infty(\partial SM,GL(n,\C)) \xrightarrow{\H^*} \U\backslash C^\infty(\partial S M,U(n)) \xrightarrow{B} \U\backslash C^\infty(\partial_+SM,U(n)),
\end{equation*}
where the first arrow is $[(\A,w)]\mapsto [E_\A w]$ and 
$\H^*$ is the transform defined in \eqref{hilbert+}. Both $\H^*$ and $B$ are easily seen to descend to quotient spaces as indicated and we keep denoting them by the same symbols.

\begin{theorem} An element $q \in C^\infty(\partial_+SM,U(n))$ lies in the range of $\{\u(n)\text{-pairs}\} \ni (A,\Phi) \mapsto C_{A,\Phi}$ if and only if
\begin{equation}
[q] = \mathcal{P}(\mathfrak{b}) \quad \text{ for some } \mathfrak{b}\in \BB(\partial_+SM).
\end{equation}
\end{theorem}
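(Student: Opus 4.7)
The plan is to verify the two implications separately, after noting that the boundary operator unpacks as $\mathcal{P}([(\A, w)]) = [B\H^{*}(E_{\A}w)]_{\U}$. The computational crux, used in both directions, is the following identity: if $V \in C^{\infty}(SM, U(n))$ is a unitary integrating factor for some attenuation $\B$ (i.e.\ $(X + \B)V = 0$), then $X(V^{*}V) = 0$ automatically forces $\B$ to be skew-Hermitian, while a standard cocycle argument (writing $R^{0}_{\B} = V \cdot G$ with $G$ the first integral determined by $G|_{\partial_{-}SM} = V^{-1}|_{\partial_{-}SM}$) yields $C_{\B} = V \cdot V^{-1}\circ\alpha|_{\partial_{+}SM} = B(V|_{\partial SM})$.

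For the forward direction, given a $\u(n)$-pair $\A$ with $q = C_{\A}$, I would take $\mathfrak{b} := [(\A, q)]$. Skew-Hermiticity of $\A$ together with standard regularity theory on strictly convex non-trapping surfaces ensures that $R^{0}_{\A}$ is $U(n)$-valued and smooth on $SM$, so that $q \in \S_{\A}^{\infty}$ and $E_{\A}q = R^{0}_{\A}|_{\partial SM}$. Being already unitary, this boundary value admits the trivial factorization $E_{\A}q = \Id \cdot R^{0}_{\A}|_{\partial SM}$ (with $\Id \in \HH$), so $\H^{*}(E_{\A}q) = [R^{0}_{\A}|_{\partial SM}]_{\U}$, and applying $B$ together with $R^{0}_{\A}|_{\partial_{-}SM} = \Id$ recovers $q$.

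For the reverse direction, suppose $[q] = \mathcal{P}([(\A, w)])$. I would first invoke Lemma~5.2 of \cite{PaSa20} to arrange via the $\G$-action that $\A$ is a $\u(n)$-pair (every orbit in $\mho/\G$ contains one). Let $\tilde R \in C^{\infty}(SM, GL(n, \C))$ be the smooth solution of $(X + \A)\tilde R = 0$ with $\tilde R|_{\partial SM} = E_{\A}w$ (smooth because $w \in \S_{\A}^{\infty}$), and factor $\tilde R = FU$ with $F \in \G$, $U \in C^{\infty}(SM, U(n))$ via Theorem~\ref{thmfac}\ref{fac2}. A direct substitution gives $(X + \A\act F)U = 0$, so $U$ is a unitary integrating factor for $\B := \A\act F \in \mho$; by the key identity above, $\B$ is skew-Hermitian, hence a genuine $\u(n)$-pair, and $C_{\B} = B(U|_{\partial SM})$. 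Since $E_{\A}w = F|_{\partial SM} \cdot U|_{\partial SM}$ is exactly the $fu$-factorization computing $\H^{*}$, we obtain $[C_{\B}]_{\U} = [B\H^{*}(E_{\A}w)]_{\U} = [q]_{\U}$. Writing $q = h \lact C_{\B}$ for some $h \in \U$ and extending $h$ to $\hat h \in C^{\infty}(M, U(n)) \subset \G$ (possible since $M$ is diffeomorphic to a disk and $h$ is null-homotopic in $U(n)$), the gauge transformation rule yields $C_{\B\act \hat h^{-1}} = h \lact C_{\B} = q$, so $\B' := \B\act \hat h^{-1}$ is a $\u(n)$-pair realizing $q$ as its scattering data.

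The main obstacle is ensuring that the gauged attenuation $\B = \A\act F$ lies in the restricted subclass of $\u(n)$-pairs rather than merely in $\mho$; this hinges entirely on $U$ being $U(n)$-valued, which forces $\B$ to be skew-Hermitian via $X(U^{*}U) = 0$. The $FU$-ordering in the factorization of $\tilde R$ (as opposed to $UF$) is essential to match the $fu$-ordering built into the definition of $\H^{*}$, and it is precisely this matching that allows the result to remain valid without any transport Oka-Grauert principle, in contrast to Theorem~\ref{rangepairs} where $\M = 0$ was imposed.
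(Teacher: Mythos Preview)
Your reverse direction is essentially the paper's argument (the pre-gauging via Lemma~5.2 of \cite{PaSa20} is harmless but unnecessary, and your final $h$-adjustment is in fact more carefully spelled out than in the paper).

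The forward direction, however, has a genuine gap. The assertion that ``standard regularity theory \dots\ ensures that $R^0_\A$ is \dots\ smooth on $SM$'' is false: the canonical integrating factor $R^0$ (normalised by $R^0=\Id$ on $\partial_-SM$) is in general only continuous on $SM$, not smooth, because of the glancing region $\partial_0 SM$. Concretely, for constant $\Phi\in\u(n)$ one has $R^0(x,v)=\exp(-\Phi\,\tilde\tau(x,v))$, and on $\partial SM$ the backwards travel time $\tilde\tau$ equals $\tau$ on $\partial_+SM$ and $0$ on $\partial_-SM$; since $\tau$ vanishes only to first order at $\partial_0 SM$ (e.g.\ $\tau\sim 2\langle v,\nu\rangle/\kappa$), this extension is not even $C^1$ across $\partial_0 SM$. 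Hence $E_\A q = R^0_\A\vert_{\partial SM}$ is not smooth and $q=C_\A\notin \S_\A^\infty$, so your choice $\mathfrak b=[(\A,q)]$ does not define an element of $\BB(\partial_+SM)$.

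The paper avoids this by taking $w=U\vert_{\partial_+SM}$ for a \emph{smooth} unitary integrating factor $U\in C^\infty(SM,U(n))$. Such a $U$ exists: pick any smooth $R\in C^\infty(SM,GL(n,\C))$ with $(X+\A)R=0$ (available via the extension trick), note that $X(R^*R)=0$ since $\A$ is skew-Hermitian, and set $U=R(R^*R)^{-1/2}$. Then $E_\A w = U\vert_{\partial SM}$ is automatically smooth, and the rest of your computation (trivial $fu$-factorisation, $\H^*$, then $B$) goes through with $U$ in place of $R^0_\A$.
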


\begin{proof} First assume that $q=C_\A$ for a $\u(n)$-pair $\A=(A,\Phi)$. Let $U\in C^\infty(SM,U(n))$ be a solution to $(X+\A)U=0$ on $SM$. Setting $w=U\vert_{\partial_+SM}$, we have $\mathfrak{b}:=[(\A,w)]\in \BB(\partial_+SM)$ and  $E_\A w = U\vert_{\partial SM}$, which means that $\mathcal{P}(\mathfrak{b})=[B(U\vert_{\partial SM})]=[C_\A]=[q]$.\\ 
Conversely, suppose that $[q]=P(\mathfrak{b})$ for some $\mathfrak{b}=[(\A,w)]\in \BB(\partial_+SM)$. 
Then $w=R\vert_{\partial_+SM}$ for a solution $R\in C^\infty(SM,GL(n,\C))$ to $(X+\A)R=0$. By Theorem \ref{thmfac} this may be factored as $R=FU$ for $F\in \G$ and $U\in C^\infty(SM,U(n))$. Then
\begin{equation}
\mathcal{P}(\mathfrak{b})=[B(U\vert_{\partial SM})] = [C_{\A\act F}],
\end{equation}
and by Lemma 5.2 in \cite{PaSa20}, the attenuation $\A\act F$ is given by a $\u(n)$-pair, as desired.
\end{proof}

\section{Appendix}\label{appendix}

\subsection{A tame setting}\label{tamesetting}

We discuss the Fr{\'e}chet structure and tameness of the spaces, Lie groups and actions used in the proofs of Theorem \ref{mainhif} and Proposition \ref{hifeven}. Throughout $(M,g)$ is a compact, oriented Riemannian surface with smooth and possibly empty boundary $\partial M$.

\smallskip

First recall that $C^\infty(SM,\C^n)$ has a standard Fr{\'e}chet topology, which can be generated by norms $\Vert \cdot \Vert_{H^s}$ of the Sobolev-spaces $H^s(SM,\C^n)$ ($s\in \R$). We view $C^\infty(SM,\C^n)$ as graded Fr{\'e}chet space, with grading given by
\begin{equation}\label{sobolevgrading}
\Vert \cdot \Vert_{L^2} = \Vert \cdot \Vert_{H^0} \le \Vert \cdot \Vert_{H^1} \le \dots.
\end{equation}
Note that while there are several ways to define these norms,  a different choice will result in a tamely equivalent grading. As in \textsection \ref{prelim} we will tacitly apply the considerations in this section also to $\gl(n,\C)$-valued functions.

\subsubsection{Tame spaces} The space $\oplus_{k\in I}\Omega_k$ ($I\subset \Z$) from \eqref{defOmega} lies closed in the ambient $C^\infty$-space and thus inherits a Fr{\'e}chet topology and a grading. The next lemma implies that $\oplus_{k\in I}\Omega_k$ is a tame direct summand and as $C^\infty(SM,\C^n)$ is tame \cite[Corollary~1.3.7,\textsection II]{Ham82} the space $\oplus_{k\in I} \Omega_k$ must be tame itself  \cite[Lemma~1.3.3,\textsection II]{Ham82}.

In particular, both $\mho$ from \eqref{defmho} and $\mho_{\text{odd}}$ from Proposition \ref{hifeven} are tame Fr{\'e}chet spaces.

\begin{lemma}\label{tameproj} For all $I\subset \Z$,  the $L^2$-orthogonal projection
$P_I:C^\infty(SM,\C^n)\rightarrow \oplus_{k\in I} \Omega_k$ satisfies the tame estimate
\begin{equation} 
\Vert P_I u \Vert_{H^s} \lesssim \Vert u \Vert_{H^s},\quad u\in C^\infty(SM,\C^n), s\ge 0,
\end{equation}
where $\lesssim$ means up to a constant that may depend on $I$ and $s$.
\end{lemma}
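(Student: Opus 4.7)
The plan is to reduce to a local statement using isothermal coordinates and a partition of unity, after which the estimate follows from Parseval's theorem in the angular fibre variable.

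First, cover $M$ by finitely many isothermal charts $\{U_i\}$. On each $U_i$, the bundle $SM|_{U_i}$ trivialises as $U_i \times S^1$ with angular coordinate $\theta$ along the fibres, and $V = \partial_\theta$. In this trivialisation, elements of $\Omega_k$ correspond to functions of the form $\hat u_k(x) e^{ik\theta}$ for $\hat u_k \in C^\infty(U_i, \C^n)$, and $P_I u = \sum_{k \in I} \hat u_k(x) e^{ik\theta}$. The crucial point is that locally $P_I$ commutes with all coordinate derivatives $\partial_{x_j}$ and $\partial_\theta$.

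Second, for an integer $s \ge 0$, the Sobolev norm on $U_i \times S^1$ is equivalent to $\sum_{|\alpha|+\beta \le s} \|\partial_x^\alpha \partial_\theta^\beta (\cdot)\|_{L^2}^2$. Parseval's identity in $\theta$ then gives
\begin{equation*}
\|P_I u\|_{H^s(U_i \times S^1)}^2 \asymp \sum_{|\alpha|+\beta \le s}\, \sum_{k \in I} k^{2\beta}\, \|\partial_x^\alpha \hat u_k\|_{L^2(U_i)}^2 \;\le\; \|u\|_{H^s(U_i \times S^1)}^2,
\end{equation*}
so the local estimate holds with constant independent of $I$. Non-integer $s$ follows by interpolation between consecutive integers.

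Third, globalise via a partition of unity $\{\chi_i\}$ subordinate to $\{U_i\}$, lifted to $SM$. Because each $\chi_i$ depends only on the base point $x$, multiplication by $\chi_i$ acts trivially on the fibre Fourier decomposition, so $\chi_i P_I u = P_I (\chi_i u)$. Standard partition-of-unity estimates give $\|P_I u\|_{H^s(SM)}^2 \lesssim \sum_i \|\chi_i P_I u\|_{H^s}^2 = \sum_i \|P_I(\chi_i u)\|_{H^s}^2 \lesssim \sum_i \|\chi_i u\|_{H^s}^2 \lesssim \|u\|_{H^s(SM)}^2$.

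I do not expect a serious obstacle here: the only point to watch is the consistent definition of $H^s(SM)$ for $s \ge 0$ in the presence of $\partial M$. This is handled by embedding $(M,g)$ into a closed surface $(N,g)$ and defining $H^s(SM)$ via restriction from $H^s(SN)$; the argument transfers directly since $P_I$ is fibrewise and commutes with both extension and restriction.
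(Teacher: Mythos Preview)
Your proof is correct and takes a different route from the paper. The paper proceeds intrinsically: starting from the trivial $L^2$-bound, it inducts on $s$ using the commutation identities $\eta_\pm P_I = P_{I\pm 1}\eta_\pm$ and $[V,P_I]=0$ together with the norm equivalence $\Vert\cdot\Vert_{H^{s+1}} \approx \Vert\eta_+\cdot\Vert_{H^s}+\Vert\eta_-\cdot\Vert_{H^s}+\Vert V\cdot\Vert_{H^s}+\Vert\cdot\Vert_{H^s}$. You instead localise to isothermal charts where $P_I$ becomes the obvious Fourier projection in the fibre angle $\theta$ and commutes with all coordinate derivatives, so Parseval gives the bound in one stroke. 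Your approach is more elementary (no structure equations are needed) and, as you observe, even yields a constant independent of $I$; the paper's argument is coordinate-free and foregrounds the operators $\eta_\pm$ that recur throughout the article.

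One small remark on your closing paragraph: the claim that $P_I$ ``commutes with both extension and restriction'' is true only if the Seeley extension is chosen fibrewise (extending in the base variables only, leaving $\theta$ untouched), and in any case this detour is unnecessary. For integer $s$ the norm $\Vert u\Vert_{H^s(SM)}^2 = \sum_{|\alpha|\le s}\Vert D^\alpha u\Vert_{L^2(SM)}^2$ makes sense directly on the manifold with boundary, and your local Parseval argument applies verbatim whether or not the chart $U_i$ meets $\partial M$, since the $\theta$-integral is always over a full circle. Interpolation then handles non-integer $s$ without any embedding into a closed surface.
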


\begin{proof}
First note that $P_I$ extends to a bounded map $L^2(SM,\C^n)\rightarrow L^2(SM,\C^n)$, simply because it is a projection. This gives a tame estimate for $s=0$ and we may proceed by induction. To this end note the Sobolev scale on $SM$ is generated by the operators $\eta_\pm$ from \eqref{defeta} together with the vertical derivative $V$ in the sense that 
\begin{equation}
\Vert \cdot \Vert_{H^{s+1}} \approx \Vert \eta_+ \cdot \Vert_{H^s} + \Vert \eta_- \cdot \Vert_{H^s} + \Vert V \cdot \Vert_{H^s}+ \Vert \cdot \Vert_{H^s},\quad s\ge 0
\end{equation}
is an equivalence of norms. Let $I^\pm=I\pm1\subset \Z$, then $\eta_\pm P_{I}=P_{I^\pm}\eta_\pm$ and $[V,P_I]=0$, which means that for all $u\in C^\infty(SM,\C^n)$ 
\begin{equation}
\begin{split}
\Vert P_I u \Vert_{H^{s+1}} &\lesssim \Vert \eta_+ P_I u \Vert_{H^s} + \Vert \eta_- P_I u \Vert_{H^s} + \Vert V P_I u\Vert_{H^s} + \Vert P_I u \Vert_{H^s} \\
& = \Vert  P_{I^+} (\eta_+ u) \Vert_{H^s} + \Vert P_{I^-} (\eta_-  u) \Vert_{H^s} + \Vert  P_I  (Vu)\Vert_{H^s}+ \Vert P_I u \Vert_{H^s} \\
&\lesssim \Vert \eta_+ u \Vert_{H^s} + \Vert \eta_- u \Vert_{H^s} + \Vert V u \Vert_{H^s} + \Vert  u \Vert_{H^s} 
\lesssim \Vert u \Vert_{H^{s+1}},
\end{split}
\end{equation}
where we have used the induction hypothesis.
\end{proof}

\subsubsection{Tame Lie groups} The group $\hat \G= C^\infty(SM,GL(n,\C))$ lies open in the ambient space $C^\infty(SM,\C^{n\times n})$ and thus is a Fr{\'e}chet manifold. We claim that $\hat \G$ is a tame Lie Group, which means that additionally the maps 
\begin{equation}\label{tameliegroup}
\mathfrak{m}:\hat\G\times \hat \G\rightarrow \hat\G\quad \text{ and } \quad \mathfrak{i}:\hat \G\rightarrow \hat\G,
\end{equation}
given by multiplication and taking inverses, respectively, are {\it smooth tame}. Similarly, the subgroups $\G\subset \hat \G$ from \eqref{defG} and $\G_\even\subset \G$ from Theorem \ref{hifeven} are tame Lie groups.

To prove tameness of $\mathfrak{m}$ and $\mathfrak{i}$ one may invoke the high-level Theorem 2.2.6 in \cite[\textsection II]{Ham82}, which states that so called `nonlinear vector bundle operators' are tame.  
To this end let $E$ and $F$ be trivial vector bundles over $SM$, with fibres given by $\C^{n\times n}$ and $\C^{n\times n}\times \C^{n\times n}$ respectively.  Next, let $U\subset E$ and $V\subset F$ be the open subsets consisting of tuples $(x,v,A)$ and $(x,v,A,B)$ respectively, where $(x,v)\in SM$ and $A,B\in GL(n,\C)$. Then
\begin{equation}
\begin{split}
p:V\rightarrow E,\quad &(x,v,A,B)\mapsto (x,v,AB)\\
q:U\rightarrow E,\quad &(x,v,A)\mapsto (x,v,A^{-1})
\end{split}
\end{equation}
are `nonlinear vector bundle maps' in Hamilton's sense.  Let $\mathcal{V}\subset C^\infty(SM,F)$ be the set of sections $f$ with image in $V$ and denote $Pf = p\circ f$, then the just cited theorem implies that $P:\mathcal{V}\rightarrow C^\infty(SM,E)$ is a tame map.  Similarly, $q$ gives rise to a tame map $Q:\mathcal{U}\rightarrow C^\infty(SM,E)$. Identifying $\mathcal{U}$ with $\hat \G$ and $\mathcal{V}$ with $\hat \G\times \hat \G$,  we see that $P$ and $Q$ correspond precisely to ${\mathfrak{m}}$ and ${\mathfrak{i}}$, such that we have established tameness of multiplication and inversion on $\hat \G$.

For $  {\mathfrak{m}}$ and ${\mathfrak{i}}$ to be smooth tame it is required that they be smooth (which is clear) and that all derivatives are tame. However, this is a consequence of the already obtained tameness, for all derivatives are again given in terms of multiplication and inversion.
\subsubsection{Tame actions}  Finally, the  various Lie group actions defined in the paper are smooth tame. As each of the actions is given in terms of multiplication, inversion and taking adjoints, this can be proved similar to above, by recasting the action map as nonlinear partial differential operator and applying \cite[Corollary~2.2.7]{Ham82}; we omit the details.

\subsection{Oka-Grauert principle on compact disks}

Let $\DD=\{z\in \C: \vert z \vert \le 1\}\subset \C$.

\begin{lemma}\label{ogdisk} Let $a\in C^\infty(\DD,\C^{n\times n})$ $(n\in \N)$. Then there exist $GL(n,\C)$-valued solutions $f,g\in C^\infty(\DD,GL(n,\C))$ to the equations
\begin{equation}\label{ogdisk1}
\partial_{\bar z} f + a f = 0 \quad \text{ and }  \quad \partial_{\bar z}g - g a = 0 \quad \text{ on } \DD.
\end{equation}
Moreover, if $a=a(p,\cdot)$ smoothly depends on a parameter $p$ in some manifold $\mathcal{P}$ -- that is, $a\in C^\infty( \mathcal{P}\times \DD,\C^{n\times n})$ -- then there are corresponding solutions $f=f(p,\cdot)$ and $g=g(p,\cdot)$ in $C^\infty(\mathcal{P} \times \DD,GL(n,\C))$.
\end{lemma}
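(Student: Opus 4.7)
My plan is to reduce the proof to (a parametric version of) the Oka-Grauert principle on a Stein neighbourhood of $\DD$. First, I would observe that $f \in C^\infty(\DD,GL(n,\C))$ solves $\partial_{\bar z} f + af = 0$ if and only if $g := f^{-1}$ satisfies $\partial_{\bar z} g - ga = 0$ (differentiate $ff^{-1} = \Id$), so it suffices to construct $f$. Using a Seeley-type extension (which preserves smooth dependence on parameters jointly in $(p,z)$), one can extend $a$ to $\tilde a \in C_c^\infty(\C,\C^{n \times n})$ with $\tilde a|_\DD = a$, reducing the task to finding a smooth $f : \C \to GL(n,\C)$ with $\partial_{\bar z} f + \tilde a f = 0$ on $\C$.

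The existence of such $f$ is then an instance of the Oka-Grauert principle on the Stein surface $\C$: the holomorphic structure $\bar\partial + \tilde a$ on the trivial rank-$n$ bundle $\C \times \C^n$ is topologically trivial by contractibility, hence holomorphically trivial. A holomorphic trivialisation is precisely a smooth $GL(n,\C)$-valued $f$ satisfying the equation; restricting to $\DD$ yields the claim.

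For a hands-on implementation convenient for parameter dependence, I would use continuation along $\tilde a_t := t\tilde a$, $t \in [0,1]$. Starting from $f_0 = \Id$, let $T \subset [0,1]$ be the set of parameters for which a smooth $GL(n,\C)$-valued solution exists and depends smoothly on $t$ (and $p$). Openness of $T$ follows from the implicit function theorem: the linearisation of the map $f \mapsto -(\partial_{\bar z} f) f^{-1}$ at any $f_t$ takes the form $\delta h \mapsto -\partial_{\bar z} \delta h - [\tilde a_t, \delta h]$, a zeroth-order perturbation of $-\bar\partial$, and is surjective via the Cauchy-Pompeiu right inverse. Closedness of $T$ reduces to uniform $C^k$-bounds on $f_t$ and $f_t^{-1}$: here the scalar Abelian equation $\partial_{\bar z} \det f_t = -\tr(\tilde a_t) \det f_t$, which is solved explicitly by exponentiating a Cauchy transform, keeps $|\det f_t|$ bounded away from zero uniformly, while elliptic regularity for $\partial_{\bar z}$ controls $f_t$ in any Sobolev norm.

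The main technical obstacle is the closedness step -- obtaining uniform invertibility bounds along the continuation path. This can be bypassed entirely by the classical Cartan-attaching proof of Oka-Grauert: cover a neighbourhood of $\supp \tilde a$ by small disks on which $\tilde a$ has $C^0$-norm small enough that the integral equation $f = \Id - P(\tilde a f)$ (with $P$ the Cauchy-Pompeiu transform) is a contraction and so has a unique local solution, then glue the local solutions using a matrix Cartan lemma. All steps -- Seeley extension, local contraction arguments, and Cartan gluing -- commute with smooth parameter dependence, so the full $p$-dependent conclusion follows.
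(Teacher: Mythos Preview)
Your proposal is correct, and your final recommended approach---constructing local solutions on small regions via a contraction (Neumann series) argument and then gluing them via a matrix Cartan lemma---is essentially the same route the paper takes; the paper also extends $a$ to $\C$, covers $\DD$ by small boxes, builds local solutions by a scaling/Neumann-series argument, and patches via Cartan's lemma, referring to Malgrange for the details on smooth parameter dependence. One minor remark: in your continuation argument the determinant trick shows $\det f_t$ solves the scalar equation, hence equals $h_t\cdot e^{-P(\operatorname{tr}\tilde a_t)}$ for some holomorphic $h_t$, but controlling $h_t$ uniformly as $t\to t_*$ is not automatic---you rightly flag this step as the obstacle and bypass it with the Cartan approach, which is exactly what the paper does.
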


This classical result is dicussed e.g.~in \cite{Mal84} -- we include a brief sketch of its proof and refer to the just cited monograph for further background and details.

\begin{proof}[Proof]
It suffices to solve the second equation in \eqref{ogdisk1}, the first one is then solved by $f=g^{-1}$. We can extend $a$ to a function $a\in C^\infty(\mathcal{P}\times \C,\C^{n\times n})$ and cover $\DD$ by translates of the box $[0,\epsilon]^2\subset \C$. For $\epsilon>0$ sufficiently small, $GL(n,\C)$-valued solutions $g_1,\dots,g_{m(\epsilon)}$, defined in neighbourhoods of the boxes, can be constructed by means of a scaling argument and a Neumann series. By Cartan's lemma, these local solutions can be patched together and, when restricted to $\DD$, yield the desired solution $g$. For more details -- including smooth parameter dependence in case that $\mathcal{P}$ is an open subset of $\R^n$ -- see \cite[Theorem~1, pp.66]{Mal84}. The passage to $\mathcal{P}$ being a manifold follows from standard arguments.  
\end{proof}

\bibliographystyle{plain}
\bibliography{ref.bib}

\end{document}